\newcommand{\R}{\mathbb{R}}
\newcommand{\Z}{\mathbb{Z}}
\newcommand{\N}{\mathbb{N}}
\newcommand{\F}{\mathbb{F}}
\newcommand{\D}{\mathcal{D}}
\newcommand{\s}{\mathcal{S}}
\newcommand{\Hom}{\mathrm{Hom}}
\newcommand{\PSL}{\mathrm{PSL}}
\newcommand{\deff}{\mathrm{def}\,}
\newcommand{\dn}[1]{\|#1\|_{\mathrm{def}}}
\newcommand{\h}{\mathrm{H}}
\newcommand{\eh}{\mathrm{EH}}
\newcommand{\bb}{\mathrm{b}}
\newcommand{\im}{\mathrm{im}\,}
\newcommand{\QM}{\mathrm{QM}}
\newcommand{\HQM}{\mathrm{HQM}}
\newcommand{\QMa}{\mathrm{QM}_{\mathrm{alt}}}
\newcommand{\QZ}{\mathrm{Q}\mathcal{Z}}
\newcommand{\QZa}{\mathrm{Q}\mathcal{Z}_{\mathrm{alt}}}
\newcommand{\lia}{\ell^\infty_{\mathrm{alt}}}
\newcommand{\QRep}{\mathrm{QRep}}
\newcommand{\QRepa}{\mathrm{QRep}_{\mathrm{alt}}}
\newcommand{\ZZ}{\mathcal{Z}}
\newcommand{\Map}{\mathrm{Map}}
\newcommand{\Aut}{\mathrm{Aut}}
\newcommand{\Out}{\mathrm{Out}}
\newcommand{\Stab}{\mathrm{Stab}}
\newcommand{\infplus}{\oplus_\infty}
\newcommand{\hooklongrightarrow}{\lhook\joinrel\longrightarrow}
\theoremstyle{plain}
\newtheorem{theorem}[]{Theorem}
\newtheorem{theorem_app}[subsection]{Theorem}
\newtheorem*{conjecture}{Conjecture}
\newtheorem{proposition}[theorem]{Proposition}
\newtheorem{proposition_app}[subsection]{Proposition}
\newtheorem*{proposition*}{Proposition}
\newtheorem{lemma}[theorem]{Lemma}
\newtheorem{corollary}[theorem]{Corollary}
\newtheorem{corollary_app}[subsection]{Corollary}
\newtheorem*{corollary*}{Corollary}
\theoremstyle{definition}
\newtheorem*{example}{Example}
\newtheorem*{remark}{Remark}
\newtheorem*{question}{Question}
\newtheorem*{questions}{Questions}
\newtheoremstyle{theorem_num}
{}{}
{\itshape}
{}
{\bfseries}
{.}
{ }
{\thmname{#1}\thmnote{ \bfseries #3}}
\theoremstyle{theorem_num}
\newtheorem{thmn}{Theorem}
\newtheorem{corn}{Corollary}
\newtheorem{propn}{Proposition}
\begin{document}

\title{Split quasicocycles}
\author{Pascal Rolli}
\date{\today}
\thanks{The author was supported by the Swiss
National Science Foundation projects 127016 and 144373}
\address{Departement Mathematik, ETH Z\"urich, CH-8092 Z\"urich, Switzerland}
\email{pascal.rolli@math.ethz.ch}

\begin{abstract}
Let $E$ be a linear isometric representation of a group $\Gamma$. In this paper we construct and study a family of quasicocycles $\Gamma\longrightarrow E$ that arise from splittings $\Gamma=A\ast B$. Under certain assumptions on $A$, $B$ and $E$ the bounded cohomology classes associated to these quasicocycles form an infinite-dimensional subspace of $\h^2_\bb(\Gamma,E)$. This is in particular the case when $\Gamma$ is free and $E$ finite-dimensional or of the type $\ell^p(\Gamma)$. For the trivial target $E=\R$ we obtain a new family of quasimorphisms for which we compute the Gromov norm in bounded cohomology. This yields a linear isometric embedding $\D(A)\oplus\D(B)\hooklongrightarrow\h^2_\bb(\Gamma,\R)$, where $\D(A)$ is a Banach space which is norm-equivalent to the alternating subspace of $\ell^\infty(A)$. We prove that there are classes of our type in $\h^2_\bb(\F_2,\R)$ which have infinite stabilizer under the natural action of $\Out(\F_2)$. By replacing the target $E$ with a group $G$ with bi-invariant metric we obtain a new type of quasi-representations $\Gamma\longrightarrow G$ that arise from splittings of $\Gamma$.
\end{abstract}

\maketitle
\vspace{-.6cm}
\section{Introduction}
\subsection{Statement of the results}In the preprint \cite{R} we observed that a pair of alternating bounded maps on $\Z$ can be combined to obtain a quasimorphism on the free group on two generators. A more general construction yields alternating quasicocycles on free product groups. To make this precise, let $A$ and $B$ be non-trivial groups and let $E$ be a Banach space endowed with a linear isometric action of $\Gamma=A\ast B$. For two alternating quasicocycles $f_A:A\longrightarrow E$, $f_B:B\longrightarrow E$ we have the \emph{split quasicocycle} $f_A\ast f_B:\Gamma\longrightarrow E$ given by
\begin{align*}
&(f_A\ast f_B)(a_1b_1\cdots a_nb_n):=\\&f_A(a_1)+a_1.f_B(b_1)+a_1b_1.f_A(a_2)+\ldots+a_1b_1a_2\cdots b_{n-1}a_n.f_B(b_n).
\end{align*}
This observation was made independently by A. Thom (\cite{Th}, Lemma 5.1). The construction induces a linear map
\[
\QZa(A,E)\times\QZa(B,E)\longrightarrow\h^2_\bb(\Gamma,E)
\]
which assigns to a pair of alternating quasicocycles $(f_A,f_B)$ the bounded cohomology class associated to the quasicocycle $f_A\ast f_B$.
The aim of this paper is to study the image of this map, referred to as the space of \emph{split classes}, under different assumptions on the target $E$. As above all splittings that we consider are assumed to have exactly two non-trivial factors. In Section 2 we obtain the following results:
\begin{thmn}[\ref{thm-fd}] Let $\Gamma=A\ast B$ be a finitely generated group for which the factor $A$ contains an infinite order element. The split classes form an infinite-dimensional subspace of $\h^2_\bb(\Gamma,E)$ for any finite-dimensional Banach $\Gamma$-module $E$.
\end{thmn}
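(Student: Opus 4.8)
I want to exhibit an infinite-dimensional family of split classes inside $\h^2_\bb(\Gamma,E)$, so the core issue is to produce enough alternating quasicocycles on the factor $A$ (or $B$) whose split classes are linearly independent in bounded cohomology. The plan is to start from the infinite order element $t\in A$, which yields an embedding $\Z\hookrightarrow A$, and to pull back alternating bounded functions on $\Z$ along this embedding; such functions are in particular alternating quasicocycles $A\to\R\subseteq E$ (using that $\R$ sits inside the finite-dimensional module $E$, e.g.\ as $E^\Gamma$ if that is nonzero, or after a suitable reduction). This gives an infinite-dimensional space of pairs $(f_A,0)$, and by the general construction each produces a split class in $\h^2_\bb(\Gamma,E)$.

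Let me think about what I need next. The key step is a \emph{lower bound}: I must show that the linear map $\QZa(A,E)\to \h^2_\bb(\Gamma,E)$, when restricted to this infinite-dimensional subspace, has large image — at minimum, that it is injective on it, or that its image is infinite-dimensional. The natural tool is a left inverse, or at least a family of linear functionals on $\h^2_\bb(\Gamma,E)$ separating the split classes. I would look for these among the \emph{dual} of bounded cohomology: pair split classes against explicitly constructed bounded $2$-cycles, or evaluate the defining quasicocycle $f_A\ast f_B$ along specific conjugacy classes / commutators built from powers of $t$ and an element of $B$. Concretely, for an element of the form $g_k = t^k b t^{-k} b^{-1}$ (or iterated commutators), the value of the homogenization of the quasicocycle $f_A\ast f_B$ can be read off and shows sensitivity to the values $f_A(t^j)$ for $|j|\le k$. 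Running over $k$ should detect an infinite-dimensional spread. This is where the hypotheses ``$A$ has an infinite order element'' and ``$\Gamma$ finitely generated'' (ensuring the ambient bounded cohomology and the reduced/quotient module behave well, and that we stay inside a nontrivial free splitting) get used.

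The main obstacle I anticipate is precisely this injectivity/independence argument at the level of \emph{bounded} cohomology rather than at the level of quasicocycles: two quasicocycles can differ by a bounded function and a genuine cocycle, so I cannot simply quote that distinct $f_A$ give distinct classes. I expect to handle this by combining two facts — first, that split quasicocycles are ``alternating'' and vanish on the factors other than via $f_A$, so a coboundary correction is constrained; and second, a homogenization argument: pass to the homogeneous quasimorphism (or quasicocycle) associated to $f_A\ast f_B$ and note that homogeneous quasicocycles representing the same class coincide up to a genuine cocycle, which for $E$ finite-dimensional is rigid enough (amenability-type or Gromov-norm estimates from Section 2) to conclude. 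If the finite-dimensional module $E$ has $E^\Gamma=0$ this needs an extra twist: I would then use that $E$, being finite-dimensional with $\Gamma$ acting by isometries, has bounded orbits, so one can still extract a ``scalar'' direction — for instance by averaging over a finite orbit or projecting onto a $\Gamma$-invariant functional — reducing to the $\R$-coefficient case where the quasimorphism machinery and the Gromov-norm computation announced in the introduction apply directly.

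Finally, I would assemble the pieces: (1) build the infinite-dimensional space $V\subseteq\QZa(A,E)$ of pullbacks of alternating bounded functions on $\langle t\rangle\cong\Z$; (2) show the split-class map is injective on $V\times\{0\}$ via the homogenization/evaluation argument above; (3) conclude that the split classes form an infinite-dimensional subspace of $\h^2_\bb(\Gamma,E)$. Step (2) is the heart of the matter and where most of the work lies; steps (1) and (3) are essentially formal given the construction of $f_A\ast f_B$ recalled in the introduction.
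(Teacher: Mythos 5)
Your overall shape (build many alternating maps supported on powers of the infinite-order element, then show the resulting split classes span an infinite-dimensional space) matches the paper, but two of your key steps have genuine gaps. First, the reduction to $\R$-coefficients does not work: a finite-dimensional isometric $\Gamma$-module $E$ need not contain a trivial submodule, need not have a finite orbit, and need not admit a nonzero $\Gamma$-invariant functional (think of $\F_2\to SO(3)$ with dense image acting on $\R^3$). The paper never reduces to scalars; instead it fixes a vector $v\in E$ and defines the factor maps with \emph{twisted} values, $f_A^p(a^{p^n})=(b\,w_{p,n-1})^{-1}.v$ where $w_{p,n}=ba^pba^{p^2}\cdots ba^{p^n}$, precisely so that the split quasicocycle accumulates $f^p(w_{p,n})=n\cdot v$ along these words. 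Relatedly, your proposed detector $g_k=t^kbt^{-k}b^{-1}$ is ineffective: for $f=f_A\ast 0$ one gets $f(g_k)=f_A(t^k)-t^kbt^{-k}.f_A(t^k)$, which vanishes identically for trivial coefficients and in general only sees an ``inner'' difference; it does not grow and cannot defeat bounded perturbations. You need values that grow linearly along a sequence of words, which is what the $w_{p,n}$ achieve.

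Second, your treatment of the kernel is the real missing idea. The kernel of $(f_A,f_B)\mapsto\omega_{f_A\ast f_B}$ consists of pairs whose split quasicocycle is \emph{trivial}, i.e.\ equals (unbounded cocycle) $+$ (bounded map), and homogenization is not available for twisted coefficients, nor is the Gromov-norm computation of Section 3 (which is proved only for $E=\R$). The paper sidesteps injectivity entirely by a dimension count: with $\mathcal{L}=\lia(A,E)\times\lia(B,E)$ and $\mathcal{B}$ the pairs whose split quasicocycle is bounded, it shows (a) $\ker\Psi/\mathcal{B}$ embeds into $\ZZ^1(\Gamma,E)/\ZZ^1_\bb(\Gamma,E)$, which is finite-dimensional because $\Gamma$ is finitely generated and $E$ is finite-dimensional (this is where both hypotheses are actually used, not for ``well-behavedness'' of $\h^2_\bb$); and (b) $\mathcal{L}/\mathcal{B}$ is infinite-dimensional via the $w_{p,n}$ evaluation, since a bounded linear combination $\sum_j\lambda_jf^{p_j}$ forces each $\lambda_j=0$. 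Then $\im\Psi\cong(\mathcal{L}/\mathcal{B})/(\ker\Psi/\mathcal{B})$ is infinite-dimensional. Without some substitute for step (a) — i.e.\ a concrete bound on how much of your family can lie in the kernel — your argument does not close.
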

\begin{thmn}[\ref{thm-lp}] Under the same assumptions the split classes form an infinite dimensional subspace of $\h^2_\bb(\Gamma,\ell^p(\Gamma))$ for all $1<p<\infty$, and also of $\h^2_\bb(\Gamma,\ell^1(\Gamma))$ if the factor $A$ is amenable.
\end{thmn}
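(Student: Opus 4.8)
The plan is to follow the template of Theorem~\ref{thm-fd}, producing an explicit infinite-dimensional family of split classes and then proving its linear independence by a duality argument tailored to the regular representation. For the construction, recall that because the $\Gamma$-action on $\ell^p(\Gamma)$ is isometric, every bounded map $f_A\colon A\longrightarrow\ell^p(\Gamma)$ is automatically a quasicocycle, and the alternating condition $f_A(x^{-1})=-x^{-1}.f_A(x)$ may be imposed by prescribing $f_A$ freely on a set of representatives for the involution $x\mapsto x^{-1}$ and extending. Since $\ell^p(\Gamma)$ is infinite-dimensional this already furnishes an infinite-dimensional space of alternating quasicocycles; taking $f_B=0$ and passing through $(f_A,f_B)\mapsto[f_A\ast f_B]$ yields an infinite-dimensional space of candidate split classes in $\h^2_\bb(\Gamma,\ell^p(\Gamma))$. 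The infinite order element $a\in A$ enters not in this construction but in the detection below, exactly as in Theorem~\ref{thm-fd}.

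The substance is to show that the linear map $f_A\mapsto[f_A\ast 0]$ has infinite-dimensional image. Writing out $(f_A\ast 0)(a_1b_1\cdots a_nb_n)=\sum_i a_1b_1\cdots b_{i-1}.f_A(a_i)$, the successive contributions are carried by the pairwise distinct cosets $a_1b_1\cdots b_{i-1}\langle a\rangle$, so the values of the associated bounded defect cocycle spread over translate-disjoint regions of $\Gamma$. To detect nonvanishing I would exploit the duality $\ell^p(\Gamma)^*=\ell^q(\Gamma)$, $\frac{1}{p}+\frac{1}{q}=1$, pairing $\delta(f_A\ast 0)$ against $\ell^q$-test vectors transported along a fixed $\langle a\rangle$-orbit so as to extract scalar bounded $2$-cocycles on $\Gamma$. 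Their nontriviality is then certified by the isometric embedding $\D(A)\oplus\D(B)\hooklongrightarrow\h^2_\bb(\Gamma,\R)$ established for trivial coefficients, while the translate-disjointness of the supports prevents distinct building blocks from cancelling, so that an infinite independent family of $f_A$'s yields an infinite-dimensional image.

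I expect the detection step to be the main obstacle, because with infinite-dimensional, nontrivially acting coefficients there is no equivariant functional $\ell^p(\Gamma)\longrightarrow\R$, and the passage to a genuinely nonzero scalar class through the moving duality pairing must be carried out without collapsing the family. For $1<p<\infty$ this is controlled by the reflexivity and uniform convexity of $\ell^p(\Gamma)$: these guarantee the weak-compactness needed for the pairing and prevent the translate-disjoint contributions from degenerating, so the argument runs for every such $p$ with no further hypothesis. For $p=1$ the space $\ell^1(\Gamma)$ is not reflexive and its dual $\ell^\infty(\Gamma)$ is far less tractable, so this mechanism fails; here I would use the extra assumption that the factor $A$ is amenable to produce an $A$-invariant mean, average the $\ell^1$-valued data over the $\langle a\rangle$-direction, and thereby recover a well-defined scalar quasimorphism, salvaging both nonvanishing and independence in the non-reflexive case.
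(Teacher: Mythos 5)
The construction half of your proposal is fine, but the detection step --- which you yourself flag as the main obstacle --- is never actually carried out, and the mechanism you sketch for it does not work as stated. Pairing the $\ell^p(\Gamma)$-valued data against a fixed $\eta\in\ell^q(\Gamma)$ does not produce a scalar quasimorphism: for $F_\eta(g)=\langle f(g),\eta\rangle$ one gets $F_\eta(gh)-F_\eta(g)-F_\eta(h)=\langle \partial f(g,h),\eta\rangle+\langle g.f(h)-f(h),\eta\rangle$, and the second term is unbounded precisely because the action is non-trivial; this is the obstruction you acknowledge, and invoking reflexivity and uniform convexity to ``guarantee the weak-compactness needed for the pairing'' and to ``prevent degeneration'' is not a substitute for an actual construction. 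Certifying non-triviality of these (undefined) scalar classes via the embedding $\D(A)\infplus\D(B)\hooklongrightarrow\h^2_\bb(\Gamma,\R)$ also begs the question, since whatever functions the pairing produced would not be split quasimorphisms. For $p=1$, averaging $\ell^1(\Gamma)$-valued data against an invariant mean on $A$ is likewise problematic: a mean is a functional on $\ell^\infty$, and the average of a bounded $\ell^1(\Gamma)$-valued family lives in $\ell^1(\Gamma)^{**}$ rather than in $\ell^1(\Gamma)$, exactly because $\ell^1$ is not reflexive.

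The two ingredients your argument is missing are the ones the paper actually uses. First, a structural lemma on trivializations: if a split quasicocycle that is bounded on the factors is trivial, then --- because $\h^1_\bb(A,\ell^p(\Gamma))=0$ by Proposition~\ref{prop-h1b}, using reflexivity of $\ell^p$ for $1<p<\infty$, or amenability of $A$ together with the fact that $\ell^1(\Gamma)$ is a coefficient module --- the $A$-component of the cocycle part is an inner cocycle and can be absorbed, yielding a trivialization of the normalized form $f=0\ast\varphi_B+\beta$ with $\varphi_B\in\ZZ^1(B,\ell^p(\Gamma))$ and $\beta$ bounded. This is where the hypotheses ``$1<p<\infty$'' and ``$A$ amenable'' genuinely enter, not through compactness of a pairing. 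Second, a quantitative growth comparison: setting $w_n=aba^2ba^3b\cdots a^{n-1}ba^n$ and $r_\xi(a^n)=w_{n-1}^{-1}.\xi$ makes $(r_\xi\ast 0)(w_n)=n\cdot\xi$ grow linearly in $n$, whereas evaluating the normalized trivial form at $w_n$ and then at a point $g\in\Gamma$ gives $\sum_{i=1}^{n-1}\zeta(w_i^{-1}g)+\beta(w_n)(g)$ with $\zeta=\varphi_B(b)$, which H\"older bounds by $n^{1-1/p}\|\zeta\|_{\ell^p(\Gamma)}+\|\beta\|_\infty=o(n)$ because the points $w_i^{-1}g$ are pairwise distinct; dividing by $n$ forces $\xi=0$. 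Without the normalization lemma you cannot control what a trivialization looks like, and without the growth estimate your ``translate-disjointness'' heuristic has no teeth; as written, the proposal does not prove the theorem.
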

The simple construction of split quasicocycles allows us to give rather short, linear-algebraic proofs of these facts. We note that infinite-dimensionality of the spaces in (ii) and (iii) follows from work of Hull-Osin (\cite{HO}, Corollary 1.7), and non-vanishing for $\ell^2$-coefficients was established before by Monod-Shalom (\cite{MS}, Corollary 7.9). Furthermore, a recently published construction of Bestvina-Bromberg-Fujiwara yields non-trivial quasi-cocycles for uniformly convex coefficients, their construction applies in particular to the spaces in (i) (see \cite{BBF}).

In the special case where the target is the trivial module $\R$ the construction yields \emph{split quasimorphisms}, which we study in Section 3. Almost all quasimorphisms that have been constructed for various groups are variations or generalizations of the counting quasimorphisms that were introduced by Brooks (see \cite{B}). It seems that even the following simple split quasimorphism on the free group of rank 2 has not yet been considered:
\begin{align*}
&f:\F_2=\langle a,b\rangle\longrightarrow\R\\
&f(a^{k_1}b^{k_2}\cdots a^{k_{n-1}}b^{k_n})=\#\left\{i\,|\,k_i>0\right\}-\#\left\{i\,|\,k_i<0\right\}.
\end{align*}
Two basic questions are whether a given quasimorphism is trivial, i.e. a bounded perturbation of a homomorphism, or homogenous, i.e. such that the restrictions to cyclic subgroups are homomorphisms. In our case we have
\begin{corn}[\ref{cor-qm-trivial}] Let $f=f_A\ast f_B$ be a split quasimorphism on $\Gamma=A\ast B$. The following are equivalent
\begin{enumerate}
\item[(i)]{$f$ is trivial}
\item[(ii)]{$f$ is homogenous}
\item[(iii)]{$f$ is a homomorphism}
\item[(iv)]{$f_A$ and $f_B$ are homomorphisms.}
\end{enumerate}
\end{corn}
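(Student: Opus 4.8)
The plan is to prove the four conditions equivalent through a cycle, isolating the substantive content in the two implications (i)$\Rightarrow$(iv) and (ii)$\Rightarrow$(iv), which I will both read off from the homogenization of $f$. Throughout I use that for the trivial module the building blocks are bounded alternating functions $f_A\in\lia(A)$, $f_B\in\lia(B)$, together with two immediate consequences of the defining formula: on a single syllable one has $f(a)=f_A(a)$ for $a\in A$ and $f(b)=f_B(b)$ for $b\in B$, while for $a\in A\setminus\{1\}$ and $b\in B\setminus\{1\}$ the word $(ab)^n$ is reduced of syllable length $2n$, so that $f\big((ab)^n\big)=n\big(f_A(a)+f_B(b)\big)$.

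First the formal implications. If $f_A$ and $f_B$ are homomorphisms, the universal property of $\Gamma=A\ast B$ yields a homomorphism extending them, which by the defining formula equals $f$; this is (iv)$\Rightarrow$(iii). A homomorphism is homogeneous and is its own bounded perturbation, giving (iii)$\Rightarrow$(ii) and (iii)$\Rightarrow$(i). It remains to produce (i)$\Rightarrow$(iv) and (ii)$\Rightarrow$(iv).

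For (ii)$\Rightarrow$(iv): if $f$ is homogeneous then $f_A(a^n)=f(a^n)=nf(a)=nf_A(a)$ for every $n$; since $|f_A(a^n)|\le\|f_A\|_\infty$ this forces $f_A(a)=0$, so $f_A\equiv0$, and symmetrically $f_B\equiv0$, whence both are (zero) homomorphisms. For (i)$\Rightarrow$(iv) I pass to the homogenization $\bar f(g):=\lim_{n\to\infty}f(g^n)/n$, which exists as $f$ is a quasimorphism. Boundedness of $f_A$ makes $f(a^n)=f_A(a^n)$ bounded, so $\bar f$ vanishes on $A$ and, symmetrically, on $B$; meanwhile the reduced-word evaluation gives $\bar f(ab)=f_A(a)+f_B(b)$. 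If $f=\varphi+\beta$ is trivial, with $\varphi$ a homomorphism and $\beta$ bounded, then $\bar f=\varphi$; thus $\varphi$ is a homomorphism vanishing on both factors, hence $\varphi\equiv0$ by the universal property, so $\bar f\equiv0$ and $f_A(a)+f_B(b)=0$ for all nontrivial $a,b$.

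The closing step is the only one requiring the alternating hypothesis: fixing a nontrivial $b$ shows $f_A$ is constant, say $\equiv c$, on $A\setminus\{1\}$, and the identity $f_A(a^{-1})=-f_A(a)$ then forces $c=-c$, so $c=0$ and $f_A\equiv0$ (with $f_A(1)=0$ since alternating functions vanish at the identity); symmetrically $f_B\equiv0$. This gives (iv) and completes the cycle. I expect the only real care to be needed in the homogenization bookkeeping---checking that $(ab)^n$ is reduced and that boundedness of the building blocks genuinely annihilates $\bar f$ on the factors---and in this last appeal to the alternating condition to exclude a nonzero constant; the remaining implications are purely formal.
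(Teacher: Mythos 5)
Your argument is correct only under an assumption you have silently added: that the factor maps $f_A$, $f_B$ are \emph{bounded}. A split quasimorphism is built from arbitrary alternating quasimorphisms $f_A\in\QMa(A)$, $f_B\in\QMa(B)$, not from elements of $\lia(A)$, $\lia(B)$; this is precisely why condition (iv) reads ``$f_A$ and $f_B$ are homomorphisms'' rather than ``$f_A=f_B=0$'', and why the paper's next corollary identifies the kernel of $\QMa(A)\times\QMa(B)\longrightarrow\h^2_\bb(\Gamma,\R)$ as $\Hom(A,\R)\times\Hom(B,\R)$. Both of your substantive implications lean on this boundedness: in (ii)$\Rightarrow$(iv) the inequality $|f_A(a^n)|\le\|f_A\|_\infty$ is unavailable, and in (i)$\Rightarrow$(iv) the claim that $\bar f$ vanishes on the factors is false in general (one only gets $\bar f|_A=\widehat{f_A}$). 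A concrete witness: $A=\Z$, $f_A(k)=k$, $f_B=0$ gives a nonzero homomorphism $f$ for which all four conditions hold, yet your two arguments would try to conclude $f_A\equiv0$. Moreover, even after deleting the boundedness step, your (ii)$\Rightarrow$(iv) only yields that $f_A$ restricts to a homomorphism on each cyclic subgroup, i.e.\ that $f_A$ is homogeneous, which for non-abelian $A$ does not make it a homomorphism. Your (i)$\Rightarrow$(iv) is repairable: from $\bar f=\varphi\in\Hom(\Gamma,\R)$ one gets $\varphi|_A=\widehat{f_A}$, and comparing $\varphi(ab)=\widehat{f_A}(a)+\widehat{f_B}(b)$ with $\bar f(ab)=f_A(a)+f_B(b)$ and using the alternating hypothesis exactly as in your closing step shows $f_A=\widehat{f_A}=\varphi|_A$, which \emph{is} a homomorphism; but this extra comparison is needed and is absent from what you wrote.

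For contrast, the paper does not argue elementarily at all: it reads the corollary off Theorem~\ref{gromov-norm}, namely $\|\omega_f\|=\tfrac12\,\deff\widehat f=\deff f=\max\{\deff f_A,\deff f_B\}$. Triviality of $f$ means $\|\omega_f\|=0$, hence $\deff f_A=\deff f_B=0$, which is (iv); homogeneity means $f=\widehat f$, so $\deff f=\tfrac12\,\deff f=0$ and $f$ is a homomorphism. If you want an independent elementary proof you must either restrict to bounded factors (where your argument is sound but proves a narrower statement) or add the homogenization bookkeeping indicated above, including a genuine fix for (ii)$\Rightarrow$(iv).
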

An immediate consequence for the second bounded cohomology of $\Gamma$ is that for $\Gamma=A\ast B$ there is a linear embedding
\[
\frac{\QMa(A)}{\Hom(A,\R)}\times\frac{\QMa(B)}{\Hom(B,\R)}\hooklongrightarrow\h^2_\bb(\Gamma,\R).
\]
Here $\QMa$ stands for the spaces of alternating quasimorphisms. In Appendix A we use this result to give a short self-contained proof of the fact that the space $\ell^\infty$ embeds into the bounded cohomology $\h^2_\bb(\F_2,\R)$ of the free group.

The above corollary is deduced from the following statement which concerns the \emph{Gromov norm} of the cohomology class associated to a split quasimorphism $f$. This norm is equal to the infimum over the defects of all quasimorphisms at bounded distance from $f$.
\begin{thmn}[\ref{gromov-norm}]
Let $f=f_A\ast f_B$ be a split quasimorphism on $\Gamma=A\ast B$ with homogenization $\widehat{f}$ and corresponding cohomology class $\omega_f$. We have the equalities
\[
\|\omega_f\|=\tfrac12\,\deff\widehat{f}=\deff f=\max\{\deff f_A,\deff f_B\}.
\]
In particular, $f$ is a minimal defect representative for its class.
\end{thmn}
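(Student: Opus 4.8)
The plan is to prove all four quantities equal by closing a ring of inequalities, where two links are standard soft facts, one is a direct computation, and one is the genuine difficulty. Writing $\delta f(g,h)=f(g)+f(h)-f(gh)$ for the coboundary, the bounded cocycle $\delta f$ represents $\omega_f$ and has $\sup$-norm exactly $\deff f$, so $\|\omega_f\|\le\deff f$. Since $f$ and its homogenization $\widehat f$ differ by a bounded function we have $\omega_f=\omega_{\widehat f}$, and for the class of a \emph{homogeneous} quasimorphism the Gromov norm is known to equal half the defect, giving $\|\omega_f\|=\tfrac12\deff\widehat f$. These two facts already force $\tfrac12\deff\widehat f=\|\omega_f\|\le\deff f$, so it remains to compute $\deff f$ and to prove the reverse inequality.

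Next I would compute $\deff f=\max\{\deff f_A,\deff f_B\}$ directly. For reduced words $g,h$ the product $gh$ is formed by cancelling a suffix of $g$ against a prefix of $h$; because $f_A$ and $f_B$ are \emph{alternating}, the contributions of each cancelled inverse pair to $f(g)$ and $f(h)$ sum to zero, so the whole cancellation block contributes nothing to $\delta f(g,h)$. The only surviving term comes from the single junction at which two remaining letters lie in a common factor and fuse into one letter, contributing exactly $f_A(a)+f_A(a')-f_A(aa')$ (or its $B$-analogue). Hence $|\delta f(g,h)|\le\max\{\deff f_A,\deff f_B\}$, with equality approached by taking $g,h$ inside whichever factor has larger defect. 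This step is short and self-contained and is where the alternating hypothesis is used essentially.

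The crux is the lower bound $\tfrac12\deff\widehat f\ge\max\{\deff f_A,\deff f_B\}$; assume $\deff f_A\ge\deff f_B$ and exhibit $g,h$ with $|\widehat f(g)+\widehat f(h)-\widehat f(gh)|$ arbitrarily close to $2\deff f_A$. Two facts fall out of the computation above: a word that is cyclically reduced with its two ends in different factors has $\widehat f=f$, while an element lying in (a conjugate of) $A$ has $\widehat f=\widehat{f_A}$ on it. Picking $b\in B\setminus\{e\}$ and $x,y\in A$, the pair $g=xb$, $h=b^{-1}y$ satisfies $gh=xy\in A$ and yields
\[
\widehat f(g)+\widehat f(h)-\widehat f(gh)=\big(f_A(x)+f_A(y)-f_A(xy)\big)+\big(f_A(xy)-\widehat{f_A}(xy)\big),
\]
a sum of a junction defect and a homogenization drop, each bounded in modulus by $\deff f_A$ (consistent with the automatic bound $\deff\widehat f\le 2\deff f$).

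The main obstacle is to drive this expression all the way to $2\deff f_A$, i.e. to choose $x,y$ making the junction defect and the homogenization drop simultaneously within $\varepsilon$ of $\deff f_A$. When $f_A$ is far from homogeneous both terms can be maximized at once (for the sign-type examples one takes $x=y$ with $xy$ of large order, where the drop is full). When $f_A$ is close to homogeneous the drop degenerates, and I would instead use an amplified pair $g=xby$, $h=b'w$, whose homogenized defect works out to the second difference $\big(f_A(w)+f_A(x)-f_A(wx)\big)-\big(f_A(y)+f_A(x)-f_A(yx)\big)$; realizing $2\deff f_A$ then amounts to finding a common coordinate $x$ through which the defect of $f_A$ attains both signs, which is where the flexibility of the free product re-enters. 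Proving that one of these families always reaches $2\deff f_A$ is the heart of the argument; granting it, all four quantities coincide, and since $\|\omega_f\|$ is the infimal defect over representatives, the identity $\|\omega_f\|=\deff f$ is exactly the statement that $f$ is a minimal defect representative.
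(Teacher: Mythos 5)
Your skeleton --- closing a ring of inequalities through $\max\{\deff f_A,\deff f_B\}\le\tfrac12\deff\widehat{f}\le\|\omega_f\|\le\deff f=\max\{\deff f_A,\deff f_B\}$ --- is exactly the paper's, and your computation of $\deff f=\max\{\deff f_A,\deff f_B\}$ by cancelling alternating pairs and isolating the single fused junction matches Proposition~\ref{prop-qc}. Two issues, one minor and one fatal. The minor one: it is \emph{not} a known fact that the class of a homogeneous quasimorphism has Gromov norm equal to half its defect; the paper explicitly states that this is an open question in general, and only Bavard's one-sided inequality $\|\omega_f\|\ge\tfrac12\deff\widehat{f}$ (Theorem~\ref{thm-bavard}) is available. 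Fortunately that is the only direction your chain uses, so the misattribution is not load-bearing, but you should invoke the inequality, not the equality.

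The fatal issue is the step you yourself call the heart: the bound $\deff\widehat{f}\ge 2\max\{\deff f_A,\deff f_B\}$ is never proved. Your first family $g=xb$, $h=b^{-1}y$ yields a junction defect plus a homogenization drop, and you would need both terms to be within $\varepsilon$ of $\deff f_A$ \emph{at the same element} $xy$; your second family yields the second difference $\bigl(f_A(w)+f_A(x)-f_A(wx)\bigr)-\bigl(f_A(y)+f_A(x)-f_A(yx)\bigr)$, and you would need the defect of $f_A$ to come within $\varepsilon$ of both $+\deff f_A$ and $-\deff f_A$ through a common coordinate $x$. Neither condition is established, and neither is obviously satisfiable for an arbitrary alternating quasimorphism $f_A$; ``granting it'' is granting the theorem. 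The paper's Lemma~\ref{lem-deff-homog} avoids both obstacles with a different amplification: for nontrivial $a_1,a_2\in A$, $b\in B$ and $a\in A$ with $a^2\neq1$, take $g=aba_2ba_1b^{-1}a^{-1}$ and $h=a^{-1}b^{-1}a_2ba_1ba$. Each of $g,h$ is conjugate to a cyclically reduced word containing the fused letter $a_1a_2$ exactly once, while $gh$ is conjugate to a cyclically reduced word containing $a_1$ and $a_2$ separately (each twice), so Proposition~\ref{prop-homog} and conjugacy invariance give $|\widehat{f}(g)+\widehat{f}(h)-\widehat{f}(gh)|=2\,|f_A(a_1)+f_A(a_2)-f_A(a_1a_2)|$ exactly --- a single junction defect doubled, with no homogenization drop and no error term to control. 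Taking the supremum over $a_1,a_2$ closes the ring. That word choice is the one genuinely new idea in the proof, and it is the piece your proposal is missing.
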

It is an open question whether the leftmost of these equalities holds for all quasimorphisms. Bavard proved that equality holds for a particular example of a counting quasimorphism (\cite{Ba}, Section 3.8). This theorem has the consequence that we can isometrically embed so called \emph{defect spaces} into $\h^2_\bb(\Gamma,\R)$. These are $\ell^\infty$-spaces equipped with an exotic norm. More precisely, we define the defect space $\D(\Gamma)$ of the group $\Gamma$ to be the space of alternating bounded functions $f:\Gamma\longrightarrow\R$ equipped with the defect $\dn{f}=\deff f=\sup_{g,h}|f(gh)-f(g)-f(g)|$ as a norm. This norm is equivalent to the usual supremum norm, so that $\D(\Gamma)$ is a Banach space equivalent to the alternating subspace of $\ell^\infty(\Gamma)$. Appendix B contains a self-contained summary of the basic properties of defect spaces. The above corollary and theorem imply
\begin{thmn}[\ref{thm-main}] For $\Gamma=A\ast B$ there is a linear isometric embedding
\[
\D(A)\infplus\D(B)\hooklongrightarrow\h^2_\bb(\Gamma,\R).
\]
\end{thmn}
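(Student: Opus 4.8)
The plan is to take $\Phi$ to be the restriction to $\D(A)\times\D(B)$ of the linear map from the introduction,
\[
\Phi\colon\D(A)\infplus\D(B)\longrightarrow\h^2_\bb(\Gamma,\R),\qquad(f_A,f_B)\longmapsto\omega_{f_A\ast f_B},
\]
and to check that it is a norm-preserving linear injection. First I would note that the restriction is legitimate: an arbitrary bounded map $f\colon A\to\R$ satisfies $|f(gh)-f(g)-f(h)|\le 3\|f\|_\infty<\infty$, so $\D(A)$ is a linear subspace of the space $\QMa(A)=\QZa(A,\R)$ of alternating quasimorphisms on $A$, and similarly $\D(B)\subseteq\QMa(B)$. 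Hence for $(f_A,f_B)\in\D(A)\times\D(B)$ the split map $f_A\ast f_B$ is a split quasimorphism on $\Gamma$ in the sense of Section 3, the class $\omega_{f_A\ast f_B}$ is well defined, and $\Phi$ is linear because the underlying construction $(f_A,f_B)\mapsto\omega_{f_A\ast f_B}$ is.

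The isometry statement is then essentially a restatement of Theorem \ref{gromov-norm}. By definition of the $\ell^\infty$-sum and of the defect norm, the norm of $(f_A,f_B)$ in $\D(A)\infplus\D(B)$ equals $\max\{\dn{f_A},\dn{f_B}\}=\max\{\deff f_A,\deff f_B\}$. On the other hand, applying Theorem \ref{gromov-norm} to $f=f_A\ast f_B$ gives $\|\omega_{f_A\ast f_B}\|=\max\{\deff f_A,\deff f_B\}$ for the Gromov norm on $\h^2_\bb(\Gamma,\R)$. Therefore $\|\Phi(f_A,f_B)\|=\|(f_A,f_B)\|$ for every pair, so $\Phi$ is norm-preserving.

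Finally, $\Phi$ is injective once we know that $\dn{\cdot}$ is non-degenerate on $\D(A)$ and $\D(B)$: if $f\in\D(A)$ has $\deff f=0$ then $f$ is a homomorphism $A\to\R$, and being bounded it vanishes (see Appendix B). Thus a norm-preserving linear map out of the normed space $\D(A)\infplus\D(B)$ has trivial kernel; alternatively one may invoke Corollary \ref{cor-qm-trivial}, according to which $\omega_{f_A\ast f_B}=0$ forces $f_A$ and $f_B$ to be homomorphisms, hence $0$. This exhibits $\Phi$ as the asserted linear isometric embedding, whose image lies inside the space of split classes for the coefficient module $\R$.

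I do not anticipate a genuine obstacle here: the analytic content has already been absorbed into Theorem \ref{gromov-norm}, where the real work is the lower bound $\|\omega_f\|\ge\max\{\deff f_A,\deff f_B\}$, the reverse inequality being immediate from $f$ itself as a representative. The remaining points are purely formal — recognizing $\D(A)\subseteq\QMa(A)$, matching the $\infplus$-norm with the maximum appearing in Theorem \ref{gromov-norm}, and recording that the defect norm is an honest norm — so the proof of this theorem should be short.
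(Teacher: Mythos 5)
Your proposal is correct and matches the paper's own argument: the paper likewise obtains the embedding by restricting $(f_A,f_B)\mapsto\omega_{f_A\ast f_B}$ to $\D(A)\infplus\D(B)$, deducing the isometry from Theorem~\ref{gromov-norm} (whose right-hand side is exactly the $\infplus$-norm) and injectivity from the fact that bounded real-valued homomorphisms vanish, so that $\dn{\cdot}$ is a genuine norm. Nothing is missing.
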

By using properties of defect spaces for subgroups, we deduce
\begin{corn}[\ref{cor-main}] If the group $\Gamma$ admits a splitting $\Gamma=A\ast B$ such that $A$ contains an element of infinite order, then there is a linear isometric embedding
$\D(\Z)\hooklongrightarrow\h^2_\bb(\Gamma,\R)$. In particular, the Banach space $\h^2_\bb(\Gamma,\R)$ is non-separable.
\end{corn}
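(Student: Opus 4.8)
The plan is to realise the desired map as a composition of three linear isometric embeddings, so that the whole problem collapses to one extension lemma for the infinite cyclic subgroup $C=\langle a\rangle\cong\Z$ of $A$. Because the norm on $\D(A)\infplus\D(B)$ is the maximum of the two factor norms, the inclusion $h\mapsto(h,0)$ is an isometric embedding $\D(A)\hooklongrightarrow\D(A)\infplus\D(B)$; composing it with the embedding of Theorem~\ref{thm-main} yields an isometric embedding $\D(A)\hooklongrightarrow\h^2_\bb(\Gamma,\R)$. Since a group isomorphism $C\cong\Z$ induces an isometric identification $\D(\Z)\cong\D(C)$, it remains to produce a linear isometric embedding $\D(C)\hooklongrightarrow\D(A)$.

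For the latter I would take the extension-by-zero map $E$, sending $f\in\D(C)$ to the function equal to $f$ on $C$ and to $0$ on its complement. This map is linear, and $E(f)$ is again bounded and alternating, because $C$ and its complement are each stable under inversion; thus $E(f)\in\D(A)$. As restriction to a subgroup cannot increase the defect, we have $\deff f\le\deff E(f)$, and the real work is the reverse inequality, which is the property of defect spaces developed in Appendix~B. To prove it, fix $g,h\in A$ and sort the term $E(f)(gh)-E(f)(g)-E(f)(h)$ by which of $g,h,gh$ lie in $C$: if all three do, the term is a defect term of $f$ inside $C$ and is bounded by $\deff f$; exactly two of them lying in $C$ is impossible since $C$ is a subgroup; and if exactly one lies in $C$ the term collapses to a single value $\pm f(\cdot)$ and is bounded by $\|f\|_\infty$. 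The main obstacle is therefore to control $\|f\|_\infty$ by the defect \emph{exactly}, rather than merely up to the constant relating $\dn{\cdot}$ to the supremum norm; here the hypothesis that $a$ has infinite order is decisive, for on the infinite cyclic group $C$ iterating the defect bound gives $|f(c^k)-k\,f(c)|\le(k-1)\deff f$, and dividing by $k$ and letting $k\to\infty$ (using boundedness of $f$) yields $|f(c)|\le\deff f$, i.e. $\|f\|_\infty\le\deff f$. Combining the cases gives $\deff E(f)\le\max\{\deff f,\|f\|_\infty\}=\deff f$, so $E$ is isometric and the three-step composition furnishes the embedding $\D(\Z)\hooklongrightarrow\h^2_\bb(\Gamma,\R)$.

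The non-separability statement then follows formally. The space $\D(\Z)$ is norm-equivalent to the alternating subspace of $\ell^\infty(\Z)$ and is therefore non-separable; as a Banach space cannot contain an isometric copy of a non-separable space unless it is itself non-separable, we conclude that $\h^2_\bb(\Gamma,\R)$ is non-separable.
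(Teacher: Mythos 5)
Your proposal is correct and follows essentially the same route as the paper: the paper likewise factors the map as $\D(\langle a\rangle)\hooklongrightarrow\D(A)\hooklongrightarrow\D(A)\infplus\D(B)\hooklongrightarrow\h^2_\bb(\Gamma,\R)$, where the first arrow is exactly your extension-by-zero map (Proposition~\ref{subgroup-emb}), whose isometry rests on the same case analysis and the same telescoping estimate $\|f\|_\infty\le\dn{f}$ (Proposition~\ref{def-infty}). The only cosmetic difference is that you reprove the sup-norm bound for the infinite cyclic case in situ, whereas the paper records it for arbitrary groups in Appendix~B.
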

The approach to bounded cohomology developed in \cite{BM} yields an identification of the space $\h^2_\bb(\Gamma,\R)$ with a weak*-closed subspace of a certain $L^\infty$-space, so that $\h^2_\bb(\Gamma,\R)$ is non-separable whenever it is infinite-dimensional. We note that Calegari gave a simple argument showing that the space $\h^2_\bb(\F_2,\R)$ is non-separable (\cite{scl}, Example~2.62).

The two next corollaries are established using results of Antol\'{\i}n-Minasyan, Haglund-Wise and Agol which say that certain classes of groups (virtually) admit epimorphisms onto free groups (see \cite{AM}, \cite{HW}, \cite{Ag}).
\begin{corn}[\ref{cor-raag}] The space $\D(\Z)\infplus\D(\Z)$ embeds isometrically into $\h^2_\bb(\Gamma,\R)$ if the non-abelian group $\Gamma$ is
\begin{itemize}
\item[(i)] a subgroup of a right angled Artin group, or
\item[(ii)] the fundamental group of a compact special cube complex.
\end{itemize}
\end{corn}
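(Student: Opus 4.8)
The plan is to reduce everything to the free quotient and then invoke Theorem~\ref{thm-main} for the splitting $\F_2=\Z\ast\Z$. First I would dispose of case (ii): by the Haglund--Wise embedding theorem (\cite{HW}) the fundamental group of a compact special cube complex embeds into a right angled Artin group, so (ii) is a special case of (i) and it suffices to treat a non-abelian subgroup $\Gamma$ of a RAAG. Since RAAGs, and hence their subgroups, are bi-orderable, $\Gamma$ is torsion-free and not virtually abelian; the Tits alternative of Antol\'{\i}n--Minasyan (\cite{AM}) then provides a finite-index subgroup $\Gamma'\le\Gamma$ together with an epimorphism $\pi\colon\Gamma'\twoheadrightarrow\F_2$. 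Passing to the normal core I may assume that $\Gamma'\trianglelefteq\Gamma$ is normal.

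The core of the argument is that pulling back split classes along a surjection onto $\F_2$ is isometric. Write $\F_2=\langle a\rangle\ast\langle b\rangle\cong\Z\ast\Z$ and let $\Psi\colon\D(\Z)\infplus\D(\Z)\hooklongrightarrow\h^2_\bb(\F_2,\R)$ be the isometric embedding of Theorem~\ref{thm-main}, sending $(f_A,f_B)$ to the class $\omega_f$ of the split quasimorphism $f=f_A\ast f_B$. For the epimorphism $\pi$ the pullback $\pi^*\omega_f$ is the class of the quasimorphism $f\circ\pi$ on $\Gamma'$, and I claim $\|\pi^*\omega_f\|=\|\omega_f\|=\|(f_A,f_B)\|=:M=\max\{\deff f_A,\deff f_B\}$. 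The inequality ``$\le$'' is immediate, since $\deff(f\circ\pi)=\deff f=M$ by surjectivity of $\pi$, and the class norm is bounded by the defect of any representative. For ``$\ge$'', recall that this Gromov norm equals the infimum of the defects of all quasimorphisms at bounded distance from $f\circ\pi$; so let $\psi$ be any such quasimorphism on $\Gamma'$. Its homogenization is $\widehat{\psi}=\widehat{f\circ\pi}=\widehat{f}\circ\pi$, and $\deff(\widehat{f}\circ\pi)=\deff\widehat{f}=2M$, where surjectivity of $\pi$ gives the first equality and the identity $\deff\widehat{f}=2\|\omega_f\|$ of Theorem~\ref{gromov-norm} gives the second. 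Combining this with the standard estimate $\deff\widehat{\psi}\le 2\deff\psi$ yields $\deff\psi\ge\tfrac12\deff\widehat{\psi}=M$. Hence $\pi^*\circ\Psi$ is a linear isometric embedding $\D(\Z)\infplus\D(\Z)\hooklongrightarrow\h^2_\bb(\Gamma',\R)$.

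It remains to descend from the finite-index subgroup $\Gamma'$ to $\Gamma$, and this is the step I expect to be the main obstacle. Because $\Gamma/\Gamma'$ is finite, hence amenable, restriction identifies $\h^2_\bb(\Gamma,\R)$ isometrically with the invariants $\h^2_\bb(\Gamma',\R)^{\Gamma/\Gamma'}$, so it suffices to land the embedding in this invariant subspace. The natural candidate is the average $\tfrac1n\sum_{\gamma}\gamma_*\,\pi^*\Psi$ over the $n$ cosets; since $\Gamma'$ is normal, each translate $\gamma_*\,\pi^*\omega_f$ is again the pullback of $\omega_f$ along the twisted epimorphism $\pi\circ c_{\gamma^{-1}}\colon\Gamma'\twoheadrightarrow\F_2$, hence has norm $M$, and the average has norm $\le M$. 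The difficulty is to rule out cancellation and establish the reverse inequality, that is, to show that the homogeneous quasimorphism $\tfrac1n\sum_\gamma\widehat f\circ(\pi\circ c_{\gamma^{-1}})$ still has defect $2M$. I would handle this by examining the product map $(\pi\circ c_{\gamma^{-1}})_\gamma\colon\Gamma'\to\F_2^{\,n}$ and choosing a test pair $(g,h)$ on which all coordinates simultaneously realize the defect of $f$ with a common sign; equivalently, one arranges the kernels of the conjugate epimorphisms to be sufficiently independent, after replacing $\Gamma'$ by a further finite-index normal subgroup if necessary. Once the averaged map is shown to preserve the norm of every element it is automatically an isometric embedding, and the identification above transports it to $\h^2_\bb(\Gamma,\R)$, completing both cases.
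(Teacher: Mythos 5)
There is a genuine gap, and it is located exactly where you flag it yourself: the descent from the finite-index subgroup $\Gamma'$ back to $\Gamma$. Your averaging argument only gives the upper bound $\bigl\|\tfrac1n\sum_\gamma\gamma_*\,\pi^*\omega_f\bigr\|\leq M$; the reverse inequality is asserted to follow from ``choosing a test pair on which all coordinates simultaneously realize the defect with a common sign,'' but no such pair is produced, and it is not clear one exists --- the conjugate epimorphisms $\pi\circ c_{\gamma^{-1}}$ need not have independent kernels, and a priori the translates $\gamma_*\,\pi^*\omega_f$ could partially cancel. ``Replacing $\Gamma'$ by a further finite-index normal subgroup if necessary'' does not obviously help and is not justified. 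As written, the proof establishes the statement only for a finite-index subgroup of $\Gamma$, i.e.\ the analogue of Corollary~\ref{cor-3mf}, not the claim of Corollary~\ref{cor-raag}, which is about $\Gamma$ itself.

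The difficulty is self-inflicted. The result of Antol\'{\i}n--Minasyan cited in the paper (\cite{AM}, Corollary~1.6) is the \emph{strong} Tits alternative for subgroups of right-angled Artin groups: a subgroup is either abelian or surjects onto $\F_2$ --- no passage to a finite-index subgroup is needed. With an epimorphism $\pi\colon\Gamma\twoheadrightarrow\F_2$ in hand, one simply composes the isometric embedding $\D(\Z)\infplus\D(\Z)\hooklongrightarrow\h^2_\bb(\F_2,\R)$ of Theorem~\ref{thm-main} with the isometric pullback $\pi^*\colon\h^2_\bb(\F_2,\R)\hooklongrightarrow\h^2_\bb(\Gamma,\R)$ of Theorem~\ref{th-huber}, and reduces (ii) to (i) by Haglund--Wise, which is precisely the paper's proof. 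Incidentally, your middle paragraph re-proves the isometry of $\pi^*$ on split classes by hand (via $\widehat{f\circ\pi}=\widehat f\circ\pi$, surjectivity, and Bavard's inequality); this is correct but redundant given Theorem~\ref{th-huber}, which the paper states for exactly this purpose. If you replace the finite-index reduction by the full strength of \cite{AM}, Corollary~1.6 and delete the averaging step, your argument becomes complete and coincides with the paper's.
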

For certain groups of type (i) infinite-dimensionality of $\h^2_\bb$ was proven by Behrstock-Charney (\cite{BC}, Theorem~5.2).
\begin{corn}[\ref{cor-3mf}]
The space $\D(\Z)\infplus\D(\Z)$ space embeds isometrically into $\h^2_\bb(\Gamma',\R)$ for a finite index subgroup $\Gamma'$, if the group $\Gamma$ is
\begin{itemize}
\item[(i)] word-hyperbolic and admits a proper and cocompact action on a CAT(0) cube complex, or
\item[(ii)] the fundamental group of a compact hyperbolic 3-manifold.
\end{itemize}
\end{corn}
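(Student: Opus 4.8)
The plan is to deduce both statements from Corollary~\ref{cor-raag}~(ii). In each case I will exhibit a finite index subgroup $\Gamma'\leq\Gamma$ which is isomorphic to the fundamental group of a compact special cube complex and which is non-abelian; Corollary~\ref{cor-raag}~(ii) applied to $\Gamma'$ then produces the isometric embedding $\D(\Z)\infplus\D(\Z)\hooklongrightarrow\h^2_\bb(\Gamma',\R)$ verbatim. Thus essentially all of the mathematical content is supplied by the cited theorems on cubulations and virtual specialness, and the only things that remain to be checked are that their hypotheses are met and that the resulting subgroup $\Gamma'$ is non-abelian.

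For case (i) I would argue as follows. We may assume that $\Gamma$ is non-elementary, since otherwise every finite index subgroup $\Gamma'$ is amenable and $\h^2_\bb(\Gamma',\R)=0$, so that there is nothing to prove; this non-elementarity is the standing (implicit) hypothesis of the statement. Agol's theorem \cite{Ag}, which builds on the special combination theorem of Haglund--Wise \cite{HW}, says that a word-hyperbolic group acting properly and cocompactly on a CAT(0) cube complex is virtually compact special; hence we may choose $\Gamma'$ to be a finite index subgroup that is the fundamental group of a compact special cube complex. A finite index subgroup of a non-elementary word-hyperbolic group is again non-elementary, hence non-amenable and in particular non-abelian, so Corollary~\ref{cor-raag}~(ii) applies to $\Gamma'$ and case (i) follows.

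For case (ii), write $\Gamma=\pi_1(M)$ with $M$ a compact hyperbolic 3-manifold; as above we assume $\Gamma$ is non-elementary, which holds automatically when $M$ is closed, for then $\Gamma$ is a cocompact lattice in $\PSL_2(\mathbb{C})$. By the work of Kahn--Markovic, together with Bergeron--Wise and Sageev (and Wise in the case of non-empty boundary), $\Gamma$ acts properly and cocompactly on a CAT(0) cube complex, so the situation reduces to case (i); equivalently, one may simply quote that the fundamental group of a compact hyperbolic 3-manifold is virtually compact special and apply Corollary~\ref{cor-raag}~(ii) to a finite index special subgroup, which is non-abelian by the argument just given. The main obstacle is therefore not analytic but bibliographic: once the cubulation-and-specialness package for these two classes of groups is invoked, the statement is a formal consequence of Corollary~\ref{cor-raag}, the only genuine point being the non-abelianness of $\Gamma'$, which we have seen follows from the non-elementarity of $\Gamma$.
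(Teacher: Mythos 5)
Your proposal is correct and follows essentially the same route as the paper: case (i) is handled by Agol's virtual specialness theorem to produce a finite index subgroup of type (ii) of Corollary~\ref{cor-raag}, and case (ii) is reduced to case (i) via the cubulation of compact hyperbolic $3$-manifold groups (the paper cites Bergeron--Wise for this step). Your explicit remark that $\Gamma$ must implicitly be non-elementary, so that the finite index subgroup $\Gamma'$ is non-abelian as required by Corollary~\ref{cor-raag}, is a point the paper leaves unstated, but it does not change the argument.
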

This leads us to ask whether all word-hyperbolic groups (virtually) admit isometrically embedded defect spaces in their second bounded cohomology.

In Section 3.2 we address the question whether split quasicocycles can be defined on amalgamated products, and discuss some examples. It turns out that generalized split quasicocycles on $\Gamma=A\ast_C B$ are exactly the pullbacks of split quasicocycles on the largest free product quotient of $\Gamma$. We apply this fact to obtain
\begin{thmn}[\ref{thm-surface}]
Let $\Gamma_m$ be the fundamental group of the closed orientable surface of genus $m$. For $m,n\geq1$ there is a linear isometric embedding
\[
\D(\Gamma_m)\infplus\D(\Gamma_n)\hooklongrightarrow\h^2_\bb(\Gamma_{m+n},\R).
\]
\end{thmn}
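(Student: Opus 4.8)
The plan is to realize the genus $m+n$ surface group as an amalgamated product, identify its largest free product quotient, and then pull back the split embedding of Theorem~\ref{thm-main} along the quotient map; the only genuine work will be to check that this pullback preserves the Gromov norm. First I would set up the geometry. Cutting the closed orientable surface of genus $m+n$ along a separating simple closed curve $\gamma$ decomposes it into a genus $m$ subsurface and a genus $n$ subsurface, each with a single boundary circle, glued along $\gamma$. Since a compact orientable surface of genus $g$ with one boundary component is homotopy equivalent to a wedge of $2g$ circles, van Kampen's theorem gives
\[
\Gamma_{m+n}=\F_{2m}\ast_{\Z}\F_{2n},
\]
where the amalgamated $\Z$ is generated on the genus $m$ side by the boundary word $c=[a_1,b_1]\cdots[a_m,b_m]$ in the standard generators of $\F_{2m}=\langle a_1,b_1,\dots,a_m,b_m\rangle$, and by the corresponding word $c'$ on the genus $n$ side.

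Next I would identify the largest free product quotient. Any free product quotient of $\F_{2m}\ast_\Z\F_{2n}$ must send the amalgamated element to the identity on both sides, and killing $c$ in $\F_{2m}$ imposes exactly the closed surface relation, so $\F_{2m}/\langle\langle c\rangle\rangle=\Gamma_m$ and $\F_{2n}/\langle\langle c'\rangle\rangle=\Gamma_n$. This yields a surjective homomorphism $q:\Gamma_{m+n}\longrightarrow\Gamma_m\ast\Gamma_n$, and by the discussion of Section~3.2 the pullbacks along $q$ of split classes are precisely the generalized split classes on $\Gamma_{m+n}$. Composing the isometric embedding $\iota:\D(\Gamma_m)\infplus\D(\Gamma_n)\hooklongrightarrow\h^2_\bb(\Gamma_m\ast\Gamma_n,\R)$ of Theorem~\ref{thm-main} with $q^*$ produces a linear map into $\h^2_\bb(\Gamma_{m+n},\R)$, and it remains to show this composite is isometric.

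For the norm computation I would argue both inequalities, writing $f=(f_A\ast f_B)\circ q$ for a representative quasimorphism of $q^*\iota(f_A,f_B)$. The upper bound $\|q^*\iota(f_A,f_B)\|\le\|\iota(f_A,f_B)\|=\max\{\deff f_A,\deff f_B\}$ is immediate, since $q^*$ is norm non-increasing and $\iota$ is isometric by Theorem~\ref{thm-main}. For the lower bound the key observations are that homogenization commutes with pullback, so $\widehat{f}=\widehat{f_A\ast f_B}\circ q$, and that surjectivity of $q$ forces $\deff\widehat{f}=\deff\widehat{f_A\ast f_B}$. By Theorem~\ref{gromov-norm} the latter equals $2\max\{\deff f_A,\deff f_B\}$, so the standard lower bound $\|\omega\|\ge\tfrac12\deff\widehat{f}$ for the class $\omega=[\delta\widehat{f}]$ of a homogeneous quasimorphism gives $\|q^*\iota(f_A,f_B)\|\ge\max\{\deff f_A,\deff f_B\}$. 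Since the $\infplus$-norm of $(f_A,f_B)$ is exactly $\max\{\dn{f_A},\dn{f_B}\}=\max\{\deff f_A,\deff f_B\}$, the two bounds coincide and the map is a linear isometric embedding.

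The hard part will be the lower bound: pullback along a surjection can in general strictly shrink the Gromov norm, so isometry is not automatic from the free product case. The argument above circumvents this by transporting the defect of the \emph{homogenized} quasimorphism across $q$; here it is essential both that $q$ is surjective, so defects of pullbacks are unchanged, and that Theorem~\ref{gromov-norm} pins down the homogenized defect on the free product to be exactly twice the target norm, which is precisely what makes the universal inequality $\|\omega\|\ge\tfrac12\deff\widehat{f}$ already sharp. I would also take care to verify the two routine points that $f$ is indeed a quasimorphism on $\Gamma_{m+n}$ (being a homomorphism composed with a quasimorphism) and that every class in the image of $\iota$ is representable by such a pullback, the latter being guaranteed by the Section~3.2 identification of generalized split quasicocycles as pullbacks from the largest free product quotient.
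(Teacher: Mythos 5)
Your proposal is correct and follows the paper's overall strategy: cut the genus $m+n$ surface along a separating curve to realize $\Gamma_{m+n}$ as the amalgam $\F_{2m}\ast_{\Z}\F_{2n}$ of the two one-holed subsurface groups, identify the largest free product quotient $q:\Gamma_{m+n}\longrightarrow\Gamma_m\ast\Gamma_n$ (induced by pinching the curve), and pull back the embedding of Theorem~\ref{thm-main}. Where you diverge is in the step you flag as ``the hard part.'' The paper disposes of it in one line by citing Theorem~\ref{th-huber} (Huber): any epimorphism of countable groups induces an \emph{isometric} embedding on $\h^2_\bb(-,\R)$, so $q^*$ preserves the Gromov norm on the nose and there is nothing further to check. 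Your worry that ``pullback along a surjection can in general strictly shrink the Gromov norm'' is therefore unfounded in this degree and with these coefficients. That said, your workaround is a valid and rather pleasant substitute: since $\widehat{f\circ q}=\widehat{f}\circ q$ and $q$ is surjective, $\deff\widehat{f\circ q}=\deff\widehat{f}$, so Bavard's inequality (Theorem~\ref{thm-bavard}) applied upstairs together with the exact computation $\deff\widehat{f_A\ast f_B}=2\max\{\deff f_A,\deff f_B\}$ from Theorem~\ref{gromov-norm} recovers the lower bound without invoking Huber at all. What your route buys is self-containedness (only Bavard plus the paper's own defect computation are needed) restricted to the image of the split classes; what the paper's route buys is brevity and the stronger statement that $q^*$ is isometric on all of $\h^2_\bb$, not just on split classes. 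Everything else in your argument (the identification $\F_{2g}/\langle\langle c\rangle\rangle=\Gamma_g$, linearity, and the $\infplus$ norm bookkeeping) matches the paper.
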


In Section 3.3 we study actions of group automorphisms on split quasimorphisms and their cohomology classes. For a group $\Gamma$ we have a natural action of $\Out(\Gamma)$ on the bounded cohomology of $\Gamma$. For a class associated to a quasimorphism $f$ this action is induced by precomposition of $f$ with elements of $\Aut(\Gamma)$. If $f$ is a split quasimorphism $f_A\ast f_B$ on $\Gamma=A\ast B$ then it can be translated by an automorphism in a second way, namely by precomposition of the factor maps $f_A$,$f_B$. This induces an action of $\Out(\Gamma)$ on the subspace of $\h^2_\bb(\Gamma,\R)$ which is spanned by the split classes of all splittings of $\Gamma$. Using this split action we show

\begin{propn}[\ref{finite-factors}]
If the group $\Gamma$ admits a splitting $\Gamma=A\ast B$ with finite factors $A$,$B$ then the subspace of split classes in $\h^2_\bb(\Gamma,\R)$ is independent of the choice of a splitting.
\end{propn}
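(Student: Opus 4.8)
Write $V_{(A',B')}\subseteq\h^2_\bb(\Gamma,\R)$ for the space of split classes of a splitting $\Gamma=A'\ast B'$. The plan is to prove that $V_{(A',B')}=V_{(A,B)}$ for \emph{every} splitting of $\Gamma$; the subspace spanned by all split classes is then equal to $V_{(A,B)}$, which is exactly the asserted independence. Throughout I use the naturality of the construction: for $\phi\in\Aut(\Gamma)$ precomposition satisfies $\phi^\ast V_\sigma=V_{\phi^{-1}\sigma}$, because $(g_{A_0}\ast g_{B_0})\circ\phi$ is again a split quasimorphism, now for the splitting $\phi^{-1}\sigma$.

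Since $A$ and $B$ are finite we have $\Hom(A,\R)=\Hom(B,\R)=0$, hence $\Hom(\Gamma,\R)=0$; by Corollary \ref{cor-qm-trivial} the assignment $(f_A,f_B)\mapsto\omega_{f_A\ast f_B}$ is then injective on $\lia(A)\oplus\lia(B)$, and $V_{(A,B)}$ is the finite-dimensional image of this map. A homogeneous quasimorphism vanishes on every element of finite order, so $\widehat{f_A\ast f_B}$ vanishes on all conjugates of $A$ and $B$; and from $(f_A\ast f_B)(w^k)=k\,(f_A\ast f_B)(w)$ for a cyclically reduced infinite-order word $w=a_1b_1\cdots a_nb_n$ one obtains the explicit formula
\[
\widehat{f_A\ast f_B}(w)=\sum_i f_A(a_i)+\sum_j f_B(b_j).
\]
As $\Hom(\Gamma,\R)=0$, two split quasimorphisms represent the same class exactly when they have the same homogenization, so I may freely identify a split class with this class function.

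A nontrivial finite group is freely indecomposable and not infinite cyclic, so $A\ast B$ is its own Grushko decomposition. By the Kurosh/Grushko uniqueness of free-factor decompositions, any splitting $\Gamma=A'\ast B'$ therefore has, after possibly interchanging the two factors, $A'=cAc^{-1}$ and $B'=dBd^{-1}$ for some $c,d\in\Gamma$; in particular $A'$ and $B'$ are again finite. Interchanging the factors leaves the space of split classes unchanged because the construction is symmetric, so $V_{(B,A)}=V_{(A,B)}$. Conjugating the entire splitting by $c^{-1}$ is an inner automorphism, which acts trivially on $\h^2_\bb(\Gamma,\R)$; applying the naturality identity to $\phi=c_c$ (conjugation by $c$) gives $V_{(cAc^{-1},dBd^{-1})}=V_{(A,\,eBe^{-1})}$ with $e=c^{-1}d$. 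Hence everything reduces to proving $V_{(A,\,eBe^{-1})}=V_{(A,B)}$, i.e. the invariance of $V_{(A,B)}$ under conjugating a single factor.

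For this last step let $\psi\in\Aut(\Gamma)$ be the automorphism fixing $A$ pointwise with $\psi(b)=ebe^{-1}$; it is well defined by the universal property of the free product since $\Gamma=A\ast eBe^{-1}$. By naturality $V_{(A,eBe^{-1})}=(\psi^{-1})^\ast V_{(A,B)}$, so it suffices to show that $\psi^{-1}$ acts as the identity on $V_{(A,B)}$, that is $\widehat{f_A\ast f_B}\circ\psi^{-1}=\widehat{f_A\ast f_B}$ for all $f_A,f_B$. Evaluating through the explicit formula, $\psi^{-1}$ replaces each $B$-syllable $b_j$ of a cyclically reduced word by the conjugate $e^{-1}b_je$; the additional $A$- and $B$-syllables contributed by $e^{-1}$ and $e$ are the reversed inverses of one another, so the alternating maps $f_A$ and $f_B$ assign them opposite values and their net contribution cancels, leaving $\sum_i f_A(a_i)+\sum_j f_B(b_j)$ unchanged. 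I expect this collar computation to be the main obstacle: when $e^{-1}b_je$ merges with neighbouring syllables during the cyclic reduction of $\psi^{-1}(w)$ the cancellation is less transparent, and one must track the syllables through all merging cases — although, since both sides are genuine class functions, the equality is forced once the bookkeeping is carried out. Combining the three reductions yields $V_{(A',B')}=V_{(A,B)}$ for every splitting, which proves the proposition; in the terminology of Section 3.3 this is precisely the statement that the split action preserves the single finite-dimensional subspace $V_{(A,B)}$.
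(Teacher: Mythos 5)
Your reduction has the right overall shape, but it stops one step short of the structural fact that makes the proposition easy, and the step you substitute for it is left unproved. The paper's argument is a one-liner: by the uniqueness of free-product decompositions, every splitting of $\Gamma$ with finite (hence freely indecomposable, non-cyclic) factors is a \emph{simultaneous} conjugate $A^g\ast B^g$ of the given one, and Proposition~\ref{prop-conj} then shows that the split quasimorphism of a conjugated splitting is at bounded distance from the original one, so the two spaces of split classes coincide. You instead extract from Kurosh only the weaker statement $A'=cAc^{-1}$, $B'=dBd^{-1}$ with possibly different $c,d$, which forces you to handle the partial conjugation $\psi\colon a\mapsto a$, $b\mapsto ebe^{-1}$; your treatment of that case ends with ``the equality is forced once the bookkeeping is carried out,'' which is an admission rather than an argument, and it sits exactly where the content of the proposition lies.

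The good news is that the case you are worried about does not occur: if $\Gamma=A\ast B=A\ast eBe^{-1}$ are both free-product decompositions, then necessarily $e\in AB$. One way to see this is via the Bass--Serre tree of $A\ast B$: the vertex fixed by $eBe^{-1}$, namely $e v_B$ where $v_B$ is the vertex fixed by $B$, must be adjacent to the vertex fixed by $A$, since otherwise $\langle A, eBe^{-1}\rangle$ is a proper subgroup or acquires an extra free factor; adjacency means $ev_B=av_B$ for some $a\in A$, i.e.\ $e\in AB$. Note also that for general $e$ your map $\psi$ is not even surjective --- in $\Z/2\Z\ast\Z/2\Z=\langle s\rangle\ast\langle t\rangle$ with $e=ts$ the image of $\psi$ is the proper subgroup $\langle s, tstst\rangle$, of index $3$ --- so the appeal to the universal property to get $\psi\in\Aut(\Gamma)$ is itself only legitimate because $e\in AB$. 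Writing $e=a_0b_0$ gives $eBe^{-1}=a_0Ba_0^{-1}$, hence $(A,eBe^{-1})=(A^{a_0},B^{a_0})$, and you are back to a single simultaneous conjugation; Proposition~\ref{prop-conj} (or your own ``naturality plus inner automorphisms act trivially'' observation) then finishes the proof with no collar computation at all. Your preparatory material (the explicit homogenization from Proposition~\ref{prop-homog}, injectivity via Corollary~\ref{cor-qm-trivial}, symmetry in the two factors) is correct but becomes unnecessary once the simultaneous-conjugacy fact is in place.
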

This has the consequence that the modular group $\PSL(2,\Z)=\Z/2\Z\ast\Z/3\Z$ has a unique split class (up to scaling), which turns out to be the class associated to the Rademacher function $\mathfrak{R}:\PSL(2,\Z)\longrightarrow\Z$.

We are not aware of any results concerning the stabilizers of the natural action of $\Out(\F_2)$ on $\h^2_\bb(\F_2,\R)$. We have the conjecture that an irreducible outer automorphism cannot stabilize a split class. In the reducible case we are able to show that there are invariant split classes:
\begin{thmn}[\ref{thm-fixed-point}] Let $f=f_A\ast f_B$ be a split quasimorphism on $\F_2=\langle a\rangle\ast\langle b\rangle$. The associated cohomology class $\omega_f$ is a fixed point of the outer automorphism $\tau_n$ given by $a\mapsto a$, $b\mapsto a^nb$, if and only if the function $f_A$ is $n$-periodic and the function $f_B$ is equal to zero.
\end{thmn}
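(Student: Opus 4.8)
The plan is to pass to homogenizations, where the fixed-point condition becomes an exact identity modulo a genuine homomorphism. The outer automorphism $\tau_n$ acts on $\h^2_\bb(\F_2,\R)$ by $\omega_f\mapsto\omega_{f\circ\tau_n}$, and two quasimorphisms determine the same class precisely when their homogenizations differ by an element of $\Hom(\F_2,\R)$. Since $\tau_n$ is a homomorphism we have $(f\circ\tau_n)(g^M)=f(\tau_n(g)^M)$, so $\widehat{f\circ\tau_n}=\widehat f\circ\tau_n$, and the statement reduces to the assertion that $\psi:=\widehat f\circ\tau_n-\widehat f$ lies in $\Hom(\F_2,\R)$. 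At this point I would record the two facts about the homogenization of a split quasimorphism that drive everything: $\widehat f$ is conjugation-invariant, it vanishes on the cyclic subgroups $\langle a\rangle$ and $\langle b\rangle$, and on a cyclically reduced word of alternating type $a^{k_1}b^{l_1}\cdots a^{k_m}b^{l_m}$ ($m\geq1$) it equals the syllable sum $\sum_i f_A(a^{k_i})+\sum_i f_B(b^{l_i})$.

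For the forward implication I would test $\psi$ against a short family of elements. Evaluating on $a$ gives $\psi(a)=\widehat f(a)-\widehat f(a)=0$, since $\tau_n$ fixes $a$. Evaluating on $a^kb$ and using $\tau_n(a^kb)=a^{k+n}b$ yields $\psi(a^kb)=f_A(a^{k+n})-f_A(a^k)$; as $\psi$ is a homomorphism with $\psi(a)=0$, the left-hand side equals the constant $\psi(b)$ for every $k$. Because $f_A$ is bounded, a telescoping argument over multiples of $n$ (otherwise $f_A(a^{Nn})$ would grow linearly in $N$) forces $\psi(b)=0$ and $f_A(a^{k+n})=f_A(a^k)$, i.e. $f_A$ is $n$-periodic. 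Finally I would fix an admissible $k$ and vary $l\geq1$: from $\tau_n(a^kb^l)=a^{k+n}b(a^nb)^{l-1}$, together with the periodicity just obtained (so that $f_A(a^{\pm n})=0$), the difference $\widehat f(\tau_n(a^kb^l))-\widehat f(a^kb^l)$ collapses to $l\,f_B(b)-f_B(b^l)$, which must equal $\psi(a^kb^l)=l\psi(b)=0$. Boundedness of $f_B$ then gives $f_B(b)=0$ and hence $f_B\equiv0$.

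For the reverse implication I would assume $f_A$ is $n$-periodic, so $f_A(a^{\pm n})=f_A(e)=0$, and $f_B\equiv0$, and show directly that $\widehat f\circ\tau_n=\widehat f$, whence $\psi=0$ is a homomorphism and $\omega_f$ is fixed. Since $f_B\equiv0$, only $A$-syllables contribute to $\widehat f$. Given a cyclically reduced $g=a^{k_1}b^{l_1}\cdots a^{k_m}b^{l_m}$, I would analyze the reduced form of $\tau_n(g)=\prod_i a^{k_i}(a^nb)^{l_i}$ block by block. The point is that each original syllable $a^{k_i}$ survives as a single syllable whose exponent is $k_i$ plus a multiple of $n$, which by periodicity still contributes $f_A(a^{k_i})$, while all newly created $A$-syllables are pure powers $a^{\pm n}$ and contribute $0$. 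Summing gives $\widehat f(\tau_n(g))=\sum_i f_A(a^{k_i})=\widehat f(g)$, and conjugation-invariance of $\widehat f$ absorbs the cyclic-reduction correction at the wrap-around junction.

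The main obstacle is precisely the junction bookkeeping in this last step: one must verify, in each of the sign cases for the $l_i$ and at the cyclic wrap-around, that the merging of adjacent powers of $a$ shifts exponents only by multiples of $n$ and produces exactly one $k_i$-type syllable per original $A$-syllable. Once $n$-periodicity is invoked to render these shifts invisible and $f_A(a^{\pm n})=0$ to kill the interior $a^{\pm n}$ syllables, the computation reduces to a clean count, and the equivalence follows.
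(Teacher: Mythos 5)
Your overall strategy coincides with the paper's: reduce the fixed-point condition to the identity $\widehat f\circ\tau_n=\widehat f+\psi$ with $\psi\in\Hom(\F_2,\R)$, evaluate on the test elements $a^kb^l$ via the syllable-sum formula for $\widehat f$ on cyclically reduced words, and use boundedness of $f_A,f_B$ to kill the terms growing linearly in $k$ and $l$. The reverse implication is also the paper's argument (the paper in fact establishes the stronger exact identity $f\circ\tau_n=f$ by passing to the quotient $\langle a\mid a^n\rangle\ast\langle b\rangle$), and your deferral of the junction bookkeeping there is tolerable because $f_B\equiv0$ makes merged $b$-syllables and the created $a^{\pm n}$-syllables invisible.

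There is, however, a genuine gap in the forward direction, at exactly the point where the paper inserts a separate argument. The formula $\psi(a^kb)=f_A(a^{k+n})-f_A(a^k)$ is valid only when both $a^kb$ and $a^{k+n}b$ are honest two-syllable words, i.e.\ for $k\neq0$ and $k\neq-n$; at $k=0$ one has $\widehat f(b)=0$ rather than $f_B(b)$, and the correct evaluation reads $\psi(b)=f_A(a^n)+f_B(b)$. Hence your telescoping yields $\psi(b)=0$ and $f_A(a^{k+n})=f_A(a^k)$ only for $k\neq 0,-n$; it does \emph{not} yield the instance $f_A(a^n)=f_A(a^0)=0$, which is precisely the nontrivial content of ``$f_A$ is $n$-periodic.'' Your second step then invokes $f_A(a^{\pm n})=0$ to collapse the $a^kb^l$ computation to $l\,f_B(b)-f_B(b^l)=0$; without that input the computation only gives $l\bigl(f_B(b)+f_A(a^n)\bigr)-f_A(a^n)-f_B(b^l)=0$, hence $f_B(b)=-f_A(a^n)$ and $f_B(b^l)=-f_A(a^n)$ for all $l\geq1$, which forces nothing to vanish. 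As written the argument is therefore circular at the step $f_A(a^{\pm n})=0$. The gap is closable inside your own test family: the $k=0$ evaluation gives $f_A(a^n)+f_B(b)=0$, while the $k=-n$ evaluation (where $\tau_n(a^{-n}b)=b$, so $\psi(a^{-n}b)=-\widehat f(a^{-n}b)=f_A(a^n)-f_B(b)$) gives $f_A(a^n)-f_B(b)=0$, whence $f_A(a^n)=f_B(b)=0$ and the rest of your argument goes through. The paper isolates this missing fact as a separate step (an evaluation on the commutator $[a,b]$); you need to supply such a step explicitly rather than derive it from a periodicity you have not yet fully established.
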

\begin{corn}[\ref{cor-infinite-stabilizer}]
If $f_A:\Z\longrightarrow\R$ is bounded, alternating and periodic then the cohomology class $\omega_f$ associated to the split quasimorphism $f=f_A\ast 0$ has infinite stabilizer $\Stab_{\Out(\F_2)}(\omega_f).$
\end{corn}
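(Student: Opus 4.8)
The plan is to produce infinitely many distinct outer automorphisms of $\F_2$ that each fix the class $\omega_f$, using Theorem \ref{thm-fixed-point} as the engine. The theorem tells us that $\omega_{f_A\ast 0}$ is fixed by the automorphism $\tau_n\colon a\mapsto a,\ b\mapsto a^nb$ precisely when $f_A$ is $n$-periodic. So the immediate idea is: if $f_A$ is periodic with some period $p\geq 1$, then $f_A$ is also $n$-periodic for every $n$ in the subsemigroup $p\Z_{>0}$ of multiples of $p$, and hence $\tau_n$ fixes $\omega_f$ for every such $n$. This already gives infinitely many candidate stabilizing automorphisms $\{\tau_{p},\tau_{2p},\tau_{3p},\dots\}$.

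The one genuine step to verify is that these $\tau_n$ give \emph{infinitely many distinct elements} of $\Out(\F_2)$, i.e.\ that the map $n\mapsto[\tau_n]$ is not eventually constant (or finite-to-one in $\Out$). First I would record that $\tau_n$ is a well-defined automorphism of $\F_2$ with inverse $a\mapsto a,\ b\mapsto a^{-n}b$, so each $\tau_n\in\Aut(\F_2)$ and descends to a class in $\Out(\F_2)$. To see that infinitely many of the classes $[\tau_n]$ are distinct, I would distinguish them by an $\Out$-invariant. The cleanest choice is the action on abelianization: under $\F_2^{\mathrm{ab}}\cong\Z^2$ with basis $\bar a,\bar b$, the automorphism $\tau_n$ acts by the matrix $\begin{pmatrix}1&n\\0&1\end{pmatrix}$. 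Conjugation by inner automorphisms acts trivially on $\Z^2$, so the induced map $\Out(\F_2)\to GL_2(\Z)$ is well-defined, and it already separates the $\tau_n$: the matrices $\begin{pmatrix}1&n\\0&1\end{pmatrix}$ are pairwise distinct for distinct $n$. Hence the classes $[\tau_n]$ for $n\in p\Z_{>0}$ are pairwise distinct, and in particular $\Stab_{\Out(\F_2)}(\omega_f)$ is infinite.

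The only point requiring a little care is the hypothesis bookkeeping: Theorem \ref{thm-fixed-point} is stated for the split quasimorphism $f=f_A\ast f_B$ on $\F_2=\langle a\rangle\ast\langle b\rangle$, and here $f_B=0$, which is trivially a bounded alternating function, so the theorem applies directly. I would also confirm that the stated hypotheses on $f_A$ (bounded, alternating, periodic with some minimal period $p$) indeed yield $f_A$ being $n$-periodic for every multiple $n=kp$, which is immediate. The main obstacle, if any, is purely the distinctness argument, and the abelianization invariant dispatches it cleanly; everything else is a direct invocation of the preceding theorem.
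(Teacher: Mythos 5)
Your proof is correct and is essentially the paper's argument: the paper likewise deduces from Theorem~\ref{thm-fixed-point} that the stabilizer contains $\tau_n$ for a period $n$ of $f_A$ and then notes that $\tau_n$ has infinite order in $\Out(\F_2)$, which is exactly what your family $\tau_{kp}=\tau_p^{\,k}$ together with the abelianization invariant verifies. Your write-up merely makes explicit the infinite-order claim that the paper asserts without proof.
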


In Section 3.4 we relate split quasimorphisms to counting quasimorphisms. A result of Grigorchuck says that any homogenous quasimorphism on the free group $\F_2$ can be expressed as a infinite linear combination of homogenized counting quasimorphisms. Due to the recursive nature of the proof it seems hopeless to compute the coefficients explicitly. We make the observation that split quasimorphisms can be written in a different way as explicit infinite sums of counting quasimorphisms.

In Section 4 we give a short discussion of quasi-representations (also known as $\varepsilon$-representations) $\Gamma\longrightarrow G$ that arise from free product splittings of a group $\Gamma$. These \emph{split quasi-representations} can take values in an arbitrary group $G$ endowed with a bi-invariant metric, such as a compact Lie group. They are obtained as a straightforward generalization of split quasicocycles $\Gamma\longrightarrow E$, in whose construction we do not use the fact that the target group $(E,+)$ is commutative. Our result in this context is
\begin{thmn}[\ref{thm-qr}] Let $\Gamma=A\ast B$ and let $G=(G,d)$ be a group with a bi-invariant metric without $\varepsilon$-small subgroups. For bounded alternating maps $\mu_A:A\longrightarrow G$, $\mu_B:B\longrightarrow G$ with
\[
\delta:=\max\{\|\mu_A\|_\infty,\|\mu_B\|_\infty\}\leq\frac{\varepsilon}{2}.
\]
the split quasi-representation $\mu=\mu_A\ast\mu_B:\Gamma\longrightarrow G$ is non-trivial, in the sense that $d(\mu,\rho)\geq\delta$ for every representation $\rho:\Gamma\longrightarrow G$.
\end{thmn}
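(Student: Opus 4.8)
The plan is to argue by contradiction. Suppose that some homomorphism $\rho\colon\Gamma\to G$ satisfies $D:=d(\mu,\rho)<\delta$; I will show that this forces $\rho$ to be the trivial homomorphism, which in turn contradicts $D<\delta$. Throughout I write $|g|=d(e,g)$ for the length induced by the bi-invariant metric, so that $\|\mu_A\|_\infty=\sup_{a\in A}|\mu_A(a)|$ and $d(\mu,\rho)=\sup_{\gamma\in\Gamma}d(\mu(\gamma),\rho(\gamma))$, giving in particular $d(\mu(\gamma),\rho(\gamma))\le D$ for every $\gamma$.

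The key observation is that the split quasi-representation is genuinely multiplicative when restricted to a single syllable: for $a\in A$ and $n\in\Z$ the element $a^n$ is one syllable, so by construction $\mu(a^n)=\mu_A(a^n)$ and hence $|\mu(a^n)|\le\|\mu_A\|_\infty\le\delta$. First I would use this to control the cyclic subgroup $\langle\rho(a)\rangle$. Since $\rho$ is a homomorphism, $\rho(a)^n=\rho(a^n)$, and the triangle inequality gives
\[
|\rho(a)^n|=|\rho(a^n)|\le|\mu(a^n)|+d(\mu(a^n),\rho(a^n))\le\delta+D<2\delta\le\varepsilon
\]
for every $n$. Thus every element of $\langle\rho(a)\rangle$ has length strictly less than $\varepsilon$, so this cyclic subgroup is $\varepsilon$-small. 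Because $G$ has no $\varepsilon$-small subgroups, $\langle\rho(a)\rangle$ is trivial and $\rho(a)=e$.

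Running the identical argument on the factor $B$ shows $\rho(b)=e$ for all $b\in B$. As $A$ and $B$ generate $\Gamma=A\ast B$, I conclude that $\rho$ is the trivial homomorphism. But then $D=d(\mu,\rho)=\sup_{\gamma}|\mu(\gamma)|$, and evaluating on single syllables $a\in A$ and $b\in B$ yields $D\ge\|\mu_A\|_\infty$ and $D\ge\|\mu_B\|_\infty$, whence $D\ge\max\{\|\mu_A\|_\infty,\|\mu_B\|_\infty\}=\delta$. This contradicts $D<\delta$, and therefore $d(\mu,\rho)\ge\delta$ for every representation $\rho$.

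I expect the main (and essentially the only) subtle point to be the passage from the pointwise estimate to the triviality of $\rho$: one must recognise that the boundedness of $\mu$ on each factor, combined with $\rho$ being a homomorphism, confines each image $\langle\rho(a)\rangle$ to the $\varepsilon$-ball around $e$, so that the no-small-subgroups hypothesis can be invoked. The factor $2$ in the standing assumption $\delta\le\varepsilon/2$ is precisely what keeps $\delta+D$ below $\varepsilon$. It is worth noting that this argument uses only the boundedness of $\mu_A$ and $\mu_B$ and the triangle inequality for $d$; neither the alternating property nor the full bounded-defect estimate for $\mu$ enters, which makes the non-triviality statement pleasantly robust.
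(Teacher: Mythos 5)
Your proof is correct, and its first half coincides with the paper's: both assume a representation $\rho$ with $d(\mu,\rho)<\delta$ and use the no-small-subgroups hypothesis together with the bound $\delta+d(\mu,\rho)<2\delta\leq\varepsilon$ to force $\rho$ to be trivial (the paper applies the hypothesis to the whole subgroup $\rho(A)$ at once, you to the cyclic subgroups $\langle\rho(a)\rangle$; this is immaterial). Where you genuinely diverge is in the endgame. Having reduced to $\|\mu\|_\infty=d(\mu,\rho)<\delta$, you simply restrict to single syllables to get $\|\mu_A\|_\infty\leq\|\mu\|_\infty<\delta$ and likewise for $B$, contradicting $\delta=\max\{\|\mu_A\|_\infty,\|\mu_B\|_\infty\}$ outright. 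The paper instead invokes the no-small-subgroups hypothesis two more times: it considers $g_\pm=ab^{\pm1}$, uses $\mu(g_\pm^n)=\mu(g_\pm)^n$ to conclude that $\langle\mu(g_\pm)\rangle$ is trivial, deduces $\mu_A(a)^2=e$ and then $\mu_A(a)=e$, arriving at $\mu_A\equiv e$, $\mu_B\equiv e$ and hence $\delta=0$. Your shortcut is valid because $\delta$ is \emph{defined} as the maximum of the factor sup-norms, so the bound $\|\mu\|_\infty<\delta$ already contradicts the definition once one notes $\mu|_A=\mu_A$ and $\mu|_B=\mu_B$; the paper's longer route proves the stronger (but here unneeded) statement that the factor maps themselves vanish. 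Your closing remark that the alternating hypothesis plays no role in the non-triviality estimate is also accurate, modulo the trivial point that one needs $\mu_B(e)=e$ so that $\mu(a)=\mu_A(a)$ on single $A$-syllables, which follows either by convention or from $\mu_B(e)^2=e$ plus smallness.
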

This result was obtained by the author in his master thesis (see \cite{R}). Burger-Ozawa-Thom have used the construction in the context of Ulam stability (see \cite{BOT}), in particular they have shown that for all $n\geq1$ there are split quasi-representations $\F_2\longrightarrow U(n)$ which have dense image.
\subsection*{Acknowledgement} The author is thankful to Marc Burger, Theo B\"uhler and Beatrice Pozzetti for their remarks on preliminary versions of this paper.
\subsection{Definitions and basic results}

Let $\Gamma$ be a group. A map $f:\Gamma\longrightarrow\R$ is called a \emph{quasimorphism} if there exists $C>0$ such that
\[
|f(gh)-f(g)-f(h)|<C,\qquad\forall\,g,h\in\Gamma.
\]
The defect of a quasimorphism $f$ is defined to be
\[
\deff f:=\sup_{g,h\in\Gamma}|f(gh)-f(g)-f(h)|.
\]
If $E$ is a Banach $\Gamma$-module, i.e. a Banach space endowed with a linear isometric action of $\Gamma$, we say that $f:\Gamma\longrightarrow E$ is a \emph{quasicocycle} if there exists $C>0$ such that
\[
\|f(gh)-f(g)-g.f(h)\|_E<C,\qquad\forall\,g,h\in\Gamma.
\]
The defect of such an $f$ is defined accordingly.
We denote by $\QZ(\Gamma,E)$ the space of quasicocycles with values in $E$, and by $\QM(\Gamma)=\QZ(\Gamma,\R)$ the space of quasimorphisms.
Recall that the group cohomology $\h^*(\Gamma,E)$ is computed by the bar complex
\[
0\xrightarrow{} E\xrightarrow{}\Map(\Gamma,E)\xrightarrow{\partial^1}\Map(\Gamma^2,E)\xrightarrow{\partial^2}\Map(\Gamma^3,E)\xrightarrow{\partial^3}\dots
\]
which has the coboundary operators
\begin{align*}
\partial^k &f(g_1,\dots,g_{k+1}):=g_1.f(g_2,\dots,g_{k+1})\\&+\sum_{i=1}^k(-1)^i f(g_1,\dots,g_ig_{i+1},\dots,g_{k+1})+(-1)^{k+1}f(g_1,\dots,g_{k+1}).
\end{align*}
The bounded cohomology $\h^*_\bb(\Gamma,E)$ is the cohomology of the subcomplex of bounded maps, which we call the \emph{bounded bar complex}:
\[
0\xrightarrow{} E\xrightarrow{}\ell^\infty(\Gamma,E)\xrightarrow{\partial^1}\ell^\infty(\Gamma^2,E)\xrightarrow{\partial^2}\ell^\infty(\Gamma^3,E)\xrightarrow{\partial^3}\dots
\]
We denote the cocycles in these complexes by $\ZZ^*(\Gamma,E)$ and $\ZZ^*_\bb(\Gamma,E)$ respectively.
The $1$-coboundary of a quasicocycle  $f:\Gamma\longrightarrow E$ as introduced above is given by $\partial^1f(g,h)=f(g)+g.f(h)-f(gh)$, so $f$ is almost a cocycle in the bar complex, and since $\partial^2\partial^1=0$ we have that $\partial^1 f$ is a 2-cocycle in the bounded bar complex. We denote by $\omega_f:=[\partial^1\!f]_\bb$ the corresponding bounded cohomology class. To say it short, we have a linear map
\[
\QZ(\Gamma,E)\longrightarrow\h^2_\bb(\Gamma,E),\quad f\mapsto\omega_f
\]
whose image $\eh^2_\bb(\Gamma,E)$ is equal to the kernel of the comparison map $\h^2_\bb(\Gamma,E)\longrightarrow\h^2(\Gamma,E)$. It is straightforward to check that a quasicocycle is in the kernel of the above map if and only if it admits a decomposition $f=\varphi+\beta$ into an actual cocycle $\varphi\in\ZZ^1(\Gamma,E)$ and a bounded perturbation $\beta\in\ell^\infty(\Gamma,E)$. These are called \emph{trivial} quasicocycles, the decomposition is called a \emph{trivialization}. A trivialization is unique if and only if the module $E$ is trivial. In general the components of two trivializations may differ by \emph{inner cocycles}
\[
\iota_v:\Gamma\longrightarrow E,\quad g\mapsto g.v-v,\quad v\in E.
\]
These are the $1$-coboundaries in the bounded bar complex. With this terminology, the space $\h^1_\bb(\Gamma,E)$ can be described as the quotient of the bounded $1$-cocycles modulo the inner cocycles. Under fairly general conditions every bounded 1-cocycle is inner:
\begin{proposition}[\cite{Mo}, Proposition~6.2.1, Corollary~7.5.11]
\label{prop-h1b} For a group $\Gamma$ and a Banach $\Gamma$-module $E$ we have $\h^1_\bb(\Gamma,E)=0$
\begin{itemize}
\item[(i)] if $E$ is reflexive as a Banach space, or
\item[(ii)] if $\Gamma$ is amenable and $E$ is a coefficient module.
\end{itemize}
\end{proposition}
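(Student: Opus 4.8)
The plan is to recast the statement as a fixed-point problem for affine isometric actions. Given a bounded $1$-cocycle $f\in\ZZ^1_\bb(\Gamma,E)$, that is, a bounded map with $f(gh)=f(g)+g.f(h)$, one checks directly that the formulas $\alpha(g)x:=g.x+f(g)$ define an action of $\Gamma$ on $E$ by affine isometries: the linear part is the given isometric $\Gamma$-action and the translation part is the cocycle $f$. The orbit of the origin is $\alpha(\Gamma)0=f(\Gamma)$, which is bounded precisely because $f$ is a bounded cocycle. A point $v\in E$ is fixed by $\alpha$ if and only if $g.v+f(g)=v$ for all $g$, i.e. $f(g)=v-g.v=\iota_{-v}(g)$; thus $\alpha$ has a fixed point exactly when $f$ is inner. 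Consequently $\h^1_\bb(\Gamma,E)=0$ is equivalent to the assertion that every affine isometric $\Gamma$-action with a bounded orbit has a global fixed point, and I would establish the two hypotheses (i) and (ii) as classical sufficient conditions for this fixed-point property, treating them separately.

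For (i), let $K\subseteq E$ be the closed convex hull of the bounded orbit $f(\Gamma)$. Then $K$ is bounded, closed, and convex, hence weakly compact because $E$ is reflexive, and it is $\alpha$-invariant since $\alpha$ acts by affine maps permuting the orbit. The action of $\Gamma$ on $K$ is by weakly continuous affine isometries, so it is \emph{noncontracting} in the sense required by Ryll--Nardzewski: for distinct $x,y\in K$ one has $\|\alpha(g)x-\alpha(g)y\|=\|x-y\|>0$ uniformly in $g$, so $0$ lies outside the closure of $\{\alpha(g)x-\alpha(g)y\}$. The Ryll--Nardzewski fixed-point theorem then yields a common fixed point $v\in K$, giving the trivialization $f=\iota_{-v}$. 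I would stress that one cannot simply take the Chebyshev center of $K$ and argue by uniqueness, since reflexive spaces need not be strictly convex; the distality inherent in isometric actions is exactly what makes Ryll--Nardzewski applicable and removes this difficulty.

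For (ii), I would use that a coefficient module $E$ is by definition the dual of a separable Banach space, so that bounded weak-$*$-closed convex subsets of $E$ are weak-$*$-compact by Banach--Alaoglu. Let $K$ be the weak-$*$-closed convex hull of $f(\Gamma)$; it is nonempty, convex, weak-$*$-compact, and $\alpha$-invariant. Each linear map $g.$ is the adjoint of the $\Gamma$-action on the separable predual, hence weak-$*$-continuous, and the translations by $f(g)$ are weak-$*$-continuous as well, so every $\alpha(g)$ is a weak-$*$-continuous affine self-map of $K$. Since $\Gamma$ is amenable, its continuous affine action on the compact convex set $K$ admits a fixed point $v$, and once again $f=\iota_{-v}$ is inner.

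The substantive content in both cases is packaged into a fixed-point theorem, so the real work is selecting the correct compactness and the correct fixed-point principle. In (i) the delicate point is that weak compactness alone singles out no canonical invariant point, and it is the noncontracting (distal) nature of isometries that licenses Ryll--Nardzewski; in (ii) the crucial verification is that each $\alpha(g)$ is weak-$*$-continuous, which rests on $E$ being dual to a separable predual, after which amenability supplies the fixed point. I expect the main obstacle to be precisely this structural bookkeeping around the weak and weak-$*$ topologies; once it is in place, the passage between fixed points of $\alpha$ and inner cocycles is the one-line computation above.
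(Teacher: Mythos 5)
Your argument is correct: the reduction of $\h^1_\bb(\Gamma,E)=0$ to a fixed-point property for affine isometric actions with bounded orbits, followed by Ryll--Nardzewski on the weakly compact convex hull in the reflexive case and by the amenability fixed-point theorem on the weak-*-compact convex hull in the coefficient-module case, is exactly the standard proof of the two cited statements, and your cautions (strict convexity cannot be assumed, weak-*-continuity of the action must come from the separable predual) are the right ones. The paper itself gives no proof here --- it quotes the result from Monod's monograph --- so your proposal is essentially a faithful reconstruction of the argument behind the citation rather than an alternative to anything in the text.
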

We will not recall the definition of a coefficient module here, for our purposes it is sufficient to know that the $\Gamma$-modules $\ell^p(\Gamma)$, $1\leq p\leq\infty$, are coefficient modules (\cite{Mo}, Examples~1.2.3).

The spaces $\h^k_\bb(\Gamma,E)$ carry a quotient semi-norm coming from the norms of the $\ell^\infty$-spaces in the bounded bar complex. For $k=2$ this is a proper norm which turns $\h^2_\bb(\Gamma,E)$ into a Banach space (see \cite{I}). Calculating this $\emph{Gromov norm}$ for a cocycle of the form $\omega_f$ amounts to finding the infimum of the defects over all the quasicocycles at bounded distance from $f$:
\begin{align*}
\|\omega_f\|&=\inf\left\{\deff \overline{f}\,\,|\,\,\overline{f}\in\QZ(\Gamma,E)\mbox{ such that }\,f-\overline{f}\mbox{ is bounded}\right\}\\
&=\inf\left\{\deff (f+\beta)\,\,|\,\,\beta\in\ell^\infty(\Gamma,E)\right\}.
\end{align*}
In the case of trivial coefficients $E=\R$ the Gromov norm is related to the notion of a \emph{homogenous} quasimorphism. This is a quasimorphism $f:\Gamma\longrightarrow\R$ for which
\[
f(g^n)=n\cdot f(g),\quad\forall g\in\Gamma\;\forall n\in\Z,
\]
which is to say that $f$ restricts to a homomorphism on every cyclic subgroup. We write $\HQM(\Gamma)$ for the corresponding subspace of $\QM(\Gamma)$. Homogenous quasimorphisms are conjugacy invariant, and for each quasimorphism $f$, there is a unique $\widehat{f}\in\HQM(\Gamma)$ at bounded distance from $f$. This \emph{homogenization} of $f$ is given by
\[
\widehat{f}(g)=\lim_{n\rightarrow\infty}\frac{f(g^n)}{n}.
\]
and the assignment $f\mapsto\widehat{f}$ defines a projection $\QM(\Gamma)\longrightarrow\HQM(\Gamma)$.
The following result of Bavard (\cite{Ba}, Section 3.6) provides a lower bound for the Gromov norm of the class of a quasimorphism
\begin{theorem}
\label{thm-bavard}
For any group $\Gamma$ and any quasimorphism $f:\Gamma\longrightarrow\R$ we have
\[
\|\omega_f\|\geq\tfrac12\cdot\deff\widehat{f}.
\]
\end{theorem}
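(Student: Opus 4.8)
The plan is to translate both quantities into defects and reduce the statement to a single universal inequality between a quasimorphism and its homogenization. For trivial coefficients the Gromov norm was computed above as $\|\omega_f\|=\inf\{\deff(f+\beta)\mid\beta\in\ell^\infty(\Gamma)\}$. Since $\widehat{f+\beta}(x)=\lim_n (f+\beta)(x^n)/n=\widehat{f}(x)$ for every bounded $\beta$, each representative $g=f+\beta$ shares the homogenization $\widehat{g}=\widehat{f}$. Consequently it suffices to prove the pointwise estimate $\deff\widehat{g}\le 2\,\deff g$ for an \emph{arbitrary} quasimorphism $g$: applying it to each $g=f+\beta$ and passing to the infimum gives $\|\omega_f\|\ge\tfrac12\deff\widehat{f}$. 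Thus the whole theorem is equivalent to the universal bound $\deff\widehat{g}\le2\,\deff g$.

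Next I would record the arithmetic behind homogenization. With $D=\deff g$ the sequence $n\mapsto g(x^n)$ is almost additive, $|g(x^{m+n})-g(x^m)-g(x^n)|\le D$, and a dyadic telescoping sharpens this to $|g(x^n)-n\,\widehat{g}(x)|\le D$ for all $n\ge1$ and all $x$; in particular $\|g-\widehat{g}\|_\infty\le D$. Dividing by $n$ and letting $n\to\infty$ in $g((xy)^n)$, $g(x^n)$ and $g(y^n)$ then yields the amplification identity
\[
\widehat{g}(xy)-\widehat{g}(x)-\widehat{g}(y)=\lim_{n\to\infty}\tfrac1n\bigl(g((xy)^n)-g(x^n)-g(y^n)\bigr),
\]
so that $\deff\widehat{g}$ is governed by the growth of the right-hand bracket, and the theorem becomes the assertion that this bracket is at most $2Dn+O(1)$.

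The main obstacle is the optimal constant. Splitting $(xy)^n$ into its $2n$ alternating letters and comparing with $x^n$ and $y^n$ costs $(2n-1)D+2(n-1)D$, giving only $\deff\widehat{g}\le4\,\deff g$ and hence the suboptimal $\|\omega_f\|\ge\tfrac14\deff\widehat{f}$; the loss comes from treating $(xy)^n$ and $x^ny^n$ independently, whereas they differ only by the element $x^{-n}(xy)^ny^{-n}$ of the commutator subgroup. Recovering the sharp factor $2$ is exactly Bavard's theorem and is where stable commutator length is indispensable: pairing the class $\omega_{\widehat{f}}=\omega_f$ against the relative fundamental classes of admissible surfaces filling powers of a commutator shows $|\widehat{f}(w)|\le 4\,\mathrm{scl}(w)\,\|\omega_f\|$ for $w\in[\Gamma,\Gamma]$, while Bavard's duality $\deff\widehat{f}=\sup_w|\widehat{f}(w)|/(2\,\mathrm{scl}(w))$ then forces $\|\omega_f\|\ge\tfrac12\deff\widehat{f}$. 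I would therefore isolate the self-contained reduction and power estimate above and invoke the scl--duality mechanism precisely for the factor $2$, since---as the letter-counting shows---this constant cannot be reached by elementary means.
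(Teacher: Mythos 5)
The paper offers no proof of this statement to compare against: it is quoted directly from Bavard's article (\cite{Ba}, Section 3.6). Judged on its own, your proposal has a correct skeleton but a genuine gap at the decisive step. The first two paragraphs are fine: since $\widehat{f+\beta}=\widehat{f}$ for every $\beta\in\ell^\infty(\Gamma)$, the theorem is equivalent to the universal estimate $\deff\widehat{g}\leq2\,\deff g$, and the bounds $|g(x^n)-n\widehat{g}(x)|\leq\deff g$ and the amplification identity are correct. The problem is the third paragraph. Your premise that the constant $2$ ``cannot be reached by elementary means'' is false, and it matters because the machinery you substitute is not actually carried out. The standard elementary argument (essentially Bavard's own) uses the identity $(ab)^n=a^n\cdot c_n$ with $c_n=(a^{-(n-1)}ba^{n-1})\cdots(a^{-1}ba)\,b$, together with conjugation-invariance of $\widehat{g}$ and $\|g-\widehat{g}\|_\infty\leq\deff g$: writing $D=\deff g$, one gets $|g((ab)^n)-g(a^n)-n\widehat{g}(b)|\leq D+(n-1)D+nD\leq 2nD$, and dividing by $n$ yields $\deff\widehat{g}\leq2\,\deff g$ exactly. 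Your letter-count loses a factor of $2$ only because it treats each conjugate $a^{-i}ba^{i}$ as three independent letters instead of using $\widehat{g}(a^{-i}ba^{i})=\widehat{g}(b)$.

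Moreover, the scl route you invoke instead does not close as written. Bavard duality states $\mathrm{scl}(w)=\sup_{\phi}|\phi(w)|/(2\,\deff\phi)$; from it one only gets $\sup_w|\widehat{f}(w)|/(2\,\mathrm{scl}(w))\leq\deff\widehat{f}$, which is the wrong direction for your argument. The equality you attribute to Bavard requires in addition the nontrivial identity $\deff\widehat{f}=\sup_{a,b}|\widehat{f}([a,b])|$ together with $\mathrm{scl}([a,b])\leq\tfrac12$ (and some care where $\mathrm{scl}(w)=0$), and the inequality $|\widehat{f}(w)|\leq4\,\mathrm{scl}(w)\,\|\omega_f\|$ is itself only gestured at via surface pairings and $\ell^1$-norms of relative fundamental classes. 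Each of these auxiliary facts is at least as hard as the elementary inequality you are trying to avoid, so the key quantitative step of the theorem is not actually proved in your write-up. Replace the third paragraph by the conjugation-trick computation above and the proof is complete and self-contained.
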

For later reference we also list the following result, a proof of which can be found in Huber's thesis (\cite{Hu}, Theorem~2.14):
\begin{theorem}
\label{th-huber}
An epimorphism $\varphi:\Gamma\longrightarrow\Gamma'$ between countable groups induces an isometric embedding
\[
\varphi^*:\h^2_\bb(\Gamma',\R)\hooklongrightarrow\h^2_\bb(\Gamma,\R).
\] 
\end{theorem}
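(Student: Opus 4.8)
The plan is to work throughout in the inhomogeneous bar complex with the trivial $\Gamma$-action on $\R$, where the coboundary reads $\partial^1\beta(g,h)=\beta(g)+\beta(h)-\beta(gh)$, and to set $N:=\ker\varphi$. The first observation is that, since $\varphi$ is surjective, the pullback $\varphi^*\colon\ell^\infty((\Gamma')^k,\R)\to\ell^\infty(\Gamma^k,\R)$ is an isometry onto the subspace of cochains that factor through $\varphi^{\times k}$: indeed $\|c\circ\varphi^{\times k}\|_\infty=\|c\|_\infty$ because $\varphi^{\times k}$ hits every tuple. It is a chain map whose image is a subcomplex, so given any representative $c'$ of a class $\omega'\in\h^2_\bb(\Gamma',\R)$ the cochain $\varphi^*c'$ represents $\varphi^*\omega'$ with the same norm. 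Taking the infimum over representatives yields $\|\varphi^*\omega'\|\le\|\omega'\|$ immediately, so the whole content of the theorem lies in injectivity together with the reverse inequality $\|\varphi^*\omega'\|\ge\|\omega'\|$.

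For injectivity I would argue by a rigidity property of inflated coboundaries. Suppose $\varphi^*\omega'=0$ and choose a normalized representative $c'\in\ZZ^2_\bb(\Gamma',\R)$, so that $\varphi^*c'=\partial^1\beta$ for some $\beta\in\ell^\infty(\Gamma,\R)$; normalization forces $\beta(e)=0$. Evaluating $\partial^1\beta(n,g)=\varphi^*c'(n,g)=c'(e',\varphi g)=0$ and $\partial^1\beta(g,n)=c'(\varphi g,e')=0$ for $g\in\Gamma$, $n\in N$ gives the two identities $\beta(ng)=\beta(n)+\beta(g)$ and $\beta(gn)=\beta(g)+\beta(n)$. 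Restricting the first to $N$ shows that $\beta|_N$ is a bounded homomorphism $N\to\R$, hence $\beta|_N\equiv0$; feeding this back, $\beta$ is constant on left and right $N$-cosets, i.e.\ constant on the fibres of $\varphi$. Therefore $\beta$ descends to a bounded $\beta'\colon\Gamma'\to\R$ with $\beta=\beta'\circ\varphi$, and the comparison $\varphi^*c'=\partial^1(\beta'\circ\varphi)=\varphi^*(\partial^1\beta')$ together with surjectivity of $\varphi$ yields $c'=\partial^1\beta'$, so $\omega'=0$. Note that no amenability of $N$ enters here.

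The reverse norm inequality is where the real work lies, and the cleanest route I see is Gromov duality: the Gromov norm on $\h^2_\bb(\Gamma,\R)$ is dual to the $\ell^1$-seminorm on the real homology computed from $\ell^1$-chains, and under this pairing $\varphi^*$ is the adjoint of the map $\varphi_*$ induced by $\varphi$ on $\ell^1$-homology. Since the adjoint of a metric surjection between Banach spaces is an isometric embedding, it suffices to show that $\varphi_*\colon H_2^{\ell^1}(\Gamma,\R)\to H_2^{\ell^1}(\Gamma',\R)$ is a metric surjection, that is, every $\ell^1$-homology class of $\Gamma'$ lifts to one of $\Gamma$ with $\ell^1$-norm larger by at most $\varepsilon$. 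A set-theoretic section $s$ of $\varphi$ lifts an $\ell^1$-cycle $z'$ to $s_\#z'$ of equal norm, but $s$ is not a chain map, so $s_\#z'$ is only an approximate cycle whose boundary $\partial s_\#z'$ lies in the kernel of $\varphi_\#$ at chain level and must be filled back with arbitrarily small $\ell^1$-cost to yield a genuine cycle.

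I expect this cheap filling of the kernel-supported error to be the main obstacle, since a priori $N$ could be non-amenable and the naive averaging over fibres, which would at once produce an $N$-invariant and hence descendable optimal primitive, is unavailable. For the applications in this paper the difficulty evaporates: there the quotient $\Gamma'$ is free, so $\h^2(\Gamma',\R)=0$ forces $\h^2_\bb(\Gamma',\R)$ to consist entirely of classes $\omega_{f'}$ of quasimorphisms, and since $\deff(f'\circ\varphi)=\deff f'$ for a surjection one checks directly that the Gromov norms agree. In the general case one must instead control the section error by exploiting that $\beta|_N$ is now only a quasimorphism of defect at most $\|\varphi^*c'+\partial^1\beta\|_\infty$, and argue that a near-optimal primitive may be chosen so that this restriction stays bounded; making this quantitative is the crux of the proof.
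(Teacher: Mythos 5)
A preliminary remark: the paper does not prove this statement at all; it is quoted from Huber's thesis (\cite{Hu}, Theorem~2.14), so there is no internal proof to compare against. Judged on its own, your proposal correctly establishes the two easy parts: the inequality $\|\varphi^*\omega'\|\leq\|\omega'\|$ is immediate as you say, and your injectivity argument (normalize $c'$, deduce that a bounded primitive $\beta$ of $\varphi^*c'$ restricts to a bounded homomorphism on $N=\ker\varphi$, hence vanishes there and descends to $\Gamma'$) is clean and valid. But these together do not give an isometric embedding; the entire content of the theorem is the reverse inequality $\|\varphi^*\omega'\|\geq\|\omega'\|$, and this you do not prove. Your $\ell^1$-homology route only reformulates it: granting the duality principle and that $\varphi^*$ is the adjoint of $\varphi_*$, the assertion that $\varphi_*\colon H_2^{\ell^1}(\Gamma)\to H_2^{\ell^1}(\Gamma')$ is a metric surjection is essentially equivalent to the theorem, and the ``cheap filling'' of $\partial s_{\#}z'$ that you flag as the crux is precisely where all the difficulty sits (note that on ordinary homology $\varphi_*$ need not even be surjective in degree $2$, e.g.\ for $\F_2\longrightarrow\Z^2$, so any such argument must genuinely exploit the $\ell^1$-setting). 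Equivalently, in cochain language the problem is to replace a near-optimal bounded primitive $\beta$ on $\Gamma$ by one constant on the fibres of $\varphi$ without increasing $\|\varphi^*c'+\partial^1\beta\|_\infty$; your injectivity computation achieves this only when the error vanishes identically, and the naive averaging over $N$ is blocked exactly because $N$ need not be amenable.

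Your fallback for the paper's applications is also not quite right as stated: when $\Gamma'$ is free, the identity $\deff(f'\circ\varphi)=\deff f'$ gives $\|\varphi^*\omega_{f'}\|\leq\|\omega_{f'}\|$, but the reverse inequality runs into the same descent problem, since the infimum defining $\|\varphi^*\omega_{f'}\|$ ranges over all bounded perturbations on $\Gamma$, not only those pulled back from $\Gamma'$. What does close the loop for the classes actually used in this paper is to combine Theorem~\ref{thm-bavard} with Theorem~\ref{gromov-norm}: homogenization commutes with pullback along a surjection, so $\|\varphi^*\omega_f\|\geq\tfrac12\deff\widehat{f\circ\varphi}=\tfrac12\deff\widehat{f}=\|\omega_f\|$ for every split quasimorphism $f$. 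That salvages the corollaries which invoke Theorem~\ref{th-huber}, but it is not a proof of the theorem in the stated generality.
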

For an automorphism $\tau\in\Aut(\Gamma)$ there is an induced isomorphism $\tau^*$ in bounded cohomology with trivial coefficients, so that for each $k\geq0$ we have an action of $\Aut(\Gamma)$ on $\h^k_\bb(\Gamma,\R)$, given by $\tau.\omega=(\tau^*)^{-1}\omega$. For inner automorphisms this action is trivial (\cite{Mo}, Lemma~8.7.2), so that we have an induced action of $\Out(\Gamma)$. This action preserves the semi-norm mentioned above, so that in particular we have a linear isometric action of $\Out(\Gamma)$ on the Banach space $\h^2_\bb(\Gamma,\R)$. We will call this the natural action of $\Out(\Gamma)$ on $\h^2_\bb$.

\section{Split quasicocycles}
Let $\Gamma$ be a group and let $E$ be a Banach $\Gamma$-module. We write
\[
\QZa(\Gamma,E):=\{f\in\QZ(\Gamma,E)\,|\,f(g)+g.f(g^{-1})=0\}
\]
for the space of \emph{alternating} quasicocycles. Assume that we have a splitting $\Gamma=A\ast B$. Through the embeddings $A,B\hooklongrightarrow\Gamma$ the space $E$ is equipped with an $A$- and $B$-module structure. We repeat here in some more detail the construction of a split quasicocycle on $\Gamma$, which we introduced in \cite{R}. For that purpose let $f_A\in\QZa(A,E)$ and $f_B\in\QZa(B,E)$. We define a map
\[
f_A\ast f_B:\Gamma\longrightarrow E
\]
as follows: For an element $1\neq g\in\Gamma$ we consider its normal form
\[
g=a_1b_1a_2b_2\cdots a_n b_n,
\]
in which $a_i\in A$, $b_i\in B$ and only $a_1$ or $b_n$ are possibly trivial. We set
\begin{align*}
&(f_A\ast f_B)(g):=\\&f_A(a_1)+a_1.f_B(b_1)+a_1b_1.f_A(a_2)+\ldots+a_1b_1a_2\cdots b_{n-1}a_n.f_B(b_n).
\end{align*}
\begin{proposition}
\label{prop-qc}
The map $f=f_A\ast f_B$ is an alternating quasicocycle on $\Gamma$ with $\deff f=\max\{\deff f_A,\deff f_B\}$. The induced linear map
\[
\QZa(A,E)\times\QZa(B,E)\longrightarrow\QZa(\Gamma,E),\quad (f_A,f_B)\mapsto f
\]
extends the natural isomorphism
\[
\ZZ^1(A,E)\times\ZZ^1(B,E)\longrightarrow\ZZ^1(\Gamma,E).
\]
\end{proposition}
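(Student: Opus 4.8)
The plan is to reduce every assertion to the behaviour of $f$ under a single product $g\cdot h$ and to control this via the unique normal form in $\Gamma=A\ast B$. Throughout I would write a nontrivial $g$ as a reduced word $g=x_1\cdots x_m$, the $x_j$ being nontrivial elements of $A$ or $B$ with consecutive syllables in different factors. Since an alternating quasicocycle vanishes at the identity (the alternating condition at $g=1$ forces $f_A(1)=f_B(1)=0$), the defining formula collapses to the telescoping sum
\[
f(g)=\sum_{j=1}^m(x_1\cdots x_{j-1}).f_{\scriptscriptstyle\bullet}(x_j),
\]
where $f_{\scriptscriptstyle\bullet}$ denotes $f_A$ or $f_B$ according to the factor containing the syllable. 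In this form linearity of $(f_A,f_B)\mapsto f$ is immediate and the cosmetic question of whether $a_1$ or $b_n$ is trivial disappears. I would next record that $f$ is alternating: expanding $f(g^{-1})$ for $g^{-1}=x_m^{-1}\cdots x_1^{-1}$, applying the alternating identity $f_{\scriptscriptstyle\bullet}(x^{-1})=-x^{-1}.f_{\scriptscriptstyle\bullet}(x)$ syllable by syllable and re-indexing, the isometric action yields $f(g^{-1})=-g^{-1}.f(g)$, i.e. $f(g)+g.f(g^{-1})=0$.

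The heart of the argument is a cancellation identity. Given reduced words $g=x_1\cdots x_m$ and $h=y_1\cdots y_k$, let $r$ be the maximal overlap, so that $x_{m-i+1}=y_i^{-1}$ for $i\le r$, and set $g'=x_1\cdots x_{m-r}$, $h'=y_{r+1}\cdots y_k$, so that $gh=g'h'$ while $g'$ and $h'$ concatenate without further cancellation. Splitting the sum $g.f(h)=\sum_i(g\,y_1\cdots y_{i-1}).f_{\scriptscriptstyle\bullet}(y_i)$ into the cancelling range $i\le r$ and the surviving range $i>r$, I would use the fact that $g\,y_1\cdots y_{i-1}=x_1\cdots x_{m-i+1}$ on the cancelling range, together with the alternating identity, to show that these terms telescope exactly against the final $r$ terms of $f(g)$. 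The upshot is the clean relation
\[
f(g)+g.f(h)=f(g')+g'.f(h'),
\]
so that $\partial^1 f(g,h)=f(g'h')-f(g')-g'.f(h')$ and the whole problem is reduced to the no-cancellation case.

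I would then finish by examining the seam between $g'$ and $h'$. If their adjacent syllables lie in different factors (or one word is empty), the telescoping sums for $f(g'h')$, $f(g')$ and $g'.f(h')$ agree termwise and $\partial^1 f(g,h)=0$. If they lie in the same factor they amalgamate, and all terms cancel except one, leaving $\partial^1 f(g,h)=\pi.\bigl(f_{\scriptscriptstyle\bullet}(xy)-f_{\scriptscriptstyle\bullet}(x)-x.f_{\scriptscriptstyle\bullet}(y)\bigr)$ for a prefix $\pi$; as $\pi$ acts isometrically, its norm is at most $\deff f_A$ or $\deff f_B$, hence at most $D:=\max\{\deff f_A,\deff f_B\}$. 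This shows $f$ is a quasicocycle with $\deff f\le D$, and the reverse inequality is immediate by restriction: since $f|_A=f_A$ and $f|_B=f_B$, one has $\partial^1 f(a,a')=f_A(aa')-f_A(a)-a.f_A(a')$ for $a,a'\in A$, whose supremum is $\deff f_A$, and likewise for $B$; therefore $\deff f=D$.

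For the second assertion, the same computation specialises: when $f_A\in\ZZ^1(A,E)$ and $f_B\in\ZZ^1(B,E)$ the amalgamation term has norm zero and the no-cancellation case is exact, so $f\in\ZZ^1(\Gamma,E)$. Conversely the telescoping formula is precisely the value forced on any crossed homomorphism by the cocycle relation, and since a crossed homomorphism on $A\ast B$ is freely determined by its restrictions to $A$ and $B$ (the universal property of the free product, read off from sections of $E\rtimes\Gamma\to\Gamma$), the map restricts to the natural isomorphism $\ZZ^1(A,E)\times\ZZ^1(B,E)\xrightarrow{\ \sim\ }\ZZ^1(\Gamma,E)$. I expect the cancellation identity — keeping precise track of the prefixes and of the sign coming from the alternating property — to be the one genuinely delicate step; once it is in place, every remaining point is termwise matching.
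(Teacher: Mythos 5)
Your proposal is correct and follows essentially the same route as the paper: reduce $\partial^1 f(g,h)$ to the seam between $g$ and $h$, cancel overlapping syllables via the alternating identity (you do this in one step with the maximal overlap $r$, the paper peels off one letter at a time), and observe that the only surviving term is a translate of $\partial^1 f_A$ or $\partial^1 f_B$. Your write-up is in fact slightly more complete, since you also spell out the lower bound $\deff f\geq\max\{\deff f_A,\deff f_B\}$ by restriction and the compatibility with the isomorphism on $\ZZ^1$, both of which the paper leaves implicit.
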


\begin{proof}
The fact that the map $f$ is alternating follows immediately from the corresponding property of the factor maps. We show that $f$ is indeed a quasicocycle. Let $g,h\in\Gamma$. If $g$ ends with an $A$-letter and $h$ begins with $B$-letter or vice versa, then $\partial f(g,h)=0$ since the normal form of $gh$ equals the concatenation of the normal forms of $g$ and $h$. If the normal forms are $g=g'a$ and $h=a^{-1}h'$ then
\begin{align*}
\partial f(g,h)&=f(g'h')-f(g'a)-g'a.f(a^{-1}h')\\
&=f(g'h')-f(g')-g'.(f(a)+a.f(a^{-1}))-g'.f(h')\\
&=\partial f(g',h')
\end{align*}
since the quasicocycle $f_A$ is alternating. The same holds for $B$-letters. So we may assume that $g=g'a_1$ and $h=a_2h'$ with $a_1a_2\neq1$ (or likewise with $B$-letters). In this case we have
\begin{align*}
\partial f(g,h)=&f(g'a_1a_2h')-f(g'a_1)-g'a_1.f(a_2h')\\
=&f(g')+g'.f(a_1a_2)+g'a_1a_2.f(h')\\&-f(g')-g'.f(a_1)-g'a_1.f(a_2)-g'a_1a_2.f(h')\\
=&g'.(f(a_1a_2)-f(a_1)-a_1.f(a_2))\\
=&g'.\partial f_A(a_1,a_2),
\end{align*}
so that $\|\partial f(g,h)\|_E=\|\partial f_A(a_1,a_2)\|_E\leq\deff f_A$. Hence $f$ is a quasicocycle with the defect indicated above. 
\end{proof}
Note that the quasicocycle $f_A\ast f_B$ is an actual cocycle if and only if $f_A$ and $f_B$ are both cocycles. In particular we have $\iota_v^A\ast\iota_v^B=\iota_v^\Gamma$, where the inner cocycles are defined on the groups indicated in the superscript.

We refer to bounded cohomology classes of the form $\omega_f$, where $f$ is a split quasicocycle for the group $\Gamma=A\ast B$, as \emph{split classes}.
\subsection{Finite-dimensional coefficients}
The aim of this subsection is to prove
\begin{theorem}
\label{thm-fd}
Let $\Gamma$ be a finitely generated group with a splitting $\Gamma=A\ast B$, such that $A$ contains an element of infinite order. Then for any finite dimensional Banach $\Gamma$-module $E$ the split classes form an infinite dimensional subspace of $\h^2_\bb(\Gamma,E)$.
\end{theorem}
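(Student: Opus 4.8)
The goal is to produce an infinite-dimensional space of split classes in $\h^2_\bb(\Gamma,E)$, and the natural strategy is to exhibit an explicit infinite family of split quasicocycles whose classes are linearly independent modulo the coboundaries and the trivial (cocycle-plus-bounded) quasicocycles. Since $A$ contains an element $a$ of infinite order, I would use the cyclic subgroup $\langle a\rangle\cong\Z$ as the source of these quasicocycles, taking $f_B=0$ throughout and letting $f_A$ range over a suitable infinite family of alternating quasicocycles $\Z\longrightarrow E$ supported on $\langle a\rangle$. The key reduction is that, by Proposition~\ref{prop-qc}, the defect and hence nontriviality of $f_A\ast 0$ is controlled entirely by $f_A$, so the whole problem is pushed down to producing infinitely many linearly independent classes in $\eh^2_\bb(\Z,E)$ (or rather classes whose split images remain independent in $\h^2_\bb(\Gamma,E)$).

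First I would construct the candidate quasicocycles. Fix a nonzero vector $v\in E$; since $E$ is finite-dimensional the action of the infinite-order element $a$ is by a linear isometry of a finite-dimensional space, so its orbit closure is a compact abelian group and the coefficients can be understood via the eigenvalue structure of the $a$-action. For each integer $k\geq 1$ I would build an alternating map $f_k:\langle a\rangle\longrightarrow E$ by prescribing bounded "block" values on the powers $a^m$, modeled on the scalar case where alternating bounded functions on $\Z$ already yield infinitely many independent quasimorphism classes. Each such $f_k$ extends by zero on $B$ and is an alternating quasicocycle on $A$, hence gives a split quasicocycle $F_k:=f_k\ast 0$ on $\Gamma$ with a definite positive defect.

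The central step is linear independence of the classes $\{\omega_{F_k}\}$ in $\h^2_\bb(\Gamma,E)$. Suppose a finite linear combination $\sum_k c_k F_k$ is trivial, i.e.\ decomposes as $\varphi+\beta$ with $\varphi\in\ZZ^1(\Gamma,E)$ and $\beta\in\ell^\infty(\Gamma,E)$. Restricting to the cyclic subgroup $\langle a\rangle$ and using that a cocycle on $\Z$ valued in a finite-dimensional module is cohomologous to one that grows at most linearly, I would argue that the bounded, oscillating behaviour of $\sum_k c_k f_k$ cannot be absorbed by a cocycle-plus-bounded decomposition unless all $c_k=0$. Concretely, the obstruction is detected by evaluating the homogenization-type limits or by pairing against the appropriate coefficient functionals on $E$, reducing to the classical independence of the scalar building blocks; the assumption $\dim E<\infty$ is what makes this reduction valid, since it forces the $a$-action to be quasi-periodic and prevents the target from hiding the oscillation.

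The main obstacle I anticipate is precisely this restriction argument: controlling how a genuine $1$-cocycle $\varphi\in\ZZ^1(\Gamma,E)$ can behave on $\langle a\rangle$, and ensuring that the block construction of the $f_k$ produces values that no linear-growth cocycle can match up to a bounded error. This requires choosing the $f_k$ so that their "second-order" asymptotics (the large-scale pattern of partial sums along $\langle a\rangle$) are genuinely distinct and not cocycle-like, which is where the finite-dimensionality of $E$ and the infinite order of $a$ must be combined carefully. Everything else — the defect computation, the alternating property, the passage from $A$ to $\Gamma$ — is handled directly by Proposition~\ref{prop-qc}.
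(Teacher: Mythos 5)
There is a genuine gap in your central step. You take $f_B=0$ and let $f_A$ range over \emph{bounded} alternating maps supported on $\langle a\rangle$, and then propose to detect nontriviality of $\sum_k c_kF_k$ by restricting to the cyclic subgroup $\langle a\rangle$. This cannot work: each factor map $f_k$ is bounded on $A$ by construction, so the restriction of $F_k=f_k\ast 0$ to $\langle a\rangle$ is itself a bounded function, and any finite linear combination is absorbed there by the bounded term $\beta$ of a trivialization (take $\varphi=0$ on $\langle a\rangle$). More structurally, $\langle a\rangle$ is amenable and $E$ is reflexive, so $\eh^2_\bb(\langle a\rangle,E)=0$ and every quasicocycle on $\langle a\rangle$ is cocycle-plus-bounded; the restriction carries no obstruction at all, and your framing of ``pushing the problem down to $\eh^2_\bb(\Z,E)$'' targets a zero space. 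The unboundedness of a split quasicocycle with bounded factors is visible only on words alternating between $A$- and $B$-letters, where the module translates $a_1b_1\cdots.f_A(a_i)$ accumulate. The paper's proof evaluates on words $w_{p,n}=ba^pba^{p^2}\cdots ba^{p^n}$ (one prime $p$ per basis element) and chooses $f_A^p(a^{p^n})=(b\,w_{p,n-1})^{-1}.v$ precisely so that the translates cancel, giving $f^p(w_{p,n})=n\cdot v$ and $f^p(w_{q,n})=0$ for $q\neq p$; boundedness of a linear combination then forces all coefficients to vanish. Your proposal contains no mechanism of this kind.

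A second omission: showing that $\sum_k c_kF_k$ is unbounded is not enough; one must also rule out that it is trivial, i.e.\ equal to an unbounded cocycle $\varphi\in\ZZ^1(\Gamma,E)$ plus a bounded map. The paper does not attempt to make the individual classes independent; instead, writing $\mathcal{L}=\lia(A,E)\times\lia(B,E)$, $\Psi:\mathcal{L}\longrightarrow\h^2_\bb(\Gamma,E)$ and $\mathcal{B}=\{(f_A,f_B)\in\mathcal{L}\,:\,f_A\ast f_B\mbox{ bounded}\}$, it shows that $\ker\Psi/\mathcal{B}$ embeds into $\ZZ^1(\Gamma,E)/\ZZ^1_\bb(\Gamma,E)$, which is finite-dimensional because $\Gamma$ is finitely generated and $E$ is finite-dimensional; since $\mathcal{L}/\mathcal{B}$ is infinite-dimensional by the construction above, the image of $\Psi$ is too. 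Note that this is the only place finite-dimensionality of $E$ enters, namely to bound $\dim\ZZ^1(\Gamma,E)$; it is not used to make the $a$-action quasi-periodic, as you suggest. You would need to add both the mixed-word evaluation and this finite-codimension argument (or a substitute for it) for your proof to go through.
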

\begin{corollary} For a non-abelian free group $\F$ the split classes form an infinite dimensional subspace of $\h^2_\bb(\F_,E)$ for any finite-dimensional Banach $\F$-module $E$.
\end{corollary}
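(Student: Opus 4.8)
The plan is to fix the second factor map to be zero and to study the linear map $\Psi\colon\QZa(A,E)\longrightarrow\h^2_\bb(\Gamma,E)$, $f_A\mapsto\omega_{f_A\ast 0}$, whose image is contained in the space of split classes. Since the split construction is additive in the pair of factor maps, it suffices to exhibit an infinite-dimensional subspace on which $\Psi$ is injective. Exploiting the infinite-order element $a_0\in A$, I would single out the subspace $V\subseteq\QZa(A,E)$ of bounded alternating maps supported on $\langle a_0\rangle$; extending an arbitrary bounded map on the positive powers by the alternating rule and the isometric action shows that $V$ is infinite-dimensional (it is parametrised by bounded alternating maps $\Z\longrightarrow E$). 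The goal then becomes to prove that $\Psi|_V$ has finite-dimensional kernel.

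To analyse $\ker\Psi$, I would unwind the condition $\omega_{f_A\ast 0}=0$, namely that $f_A\ast 0=\varphi+\beta$ for an honest cocycle $\varphi\in\ZZ^1(\Gamma,E)$ and a bounded $\beta$. By the isomorphism $\ZZ^1(A,E)\times\ZZ^1(B,E)\cong\ZZ^1(\Gamma,E)$ of Proposition \ref{prop-qc} we may write $\varphi=\varphi_A\ast\varphi_B$, and linearity of the split construction then gives $\beta=(f_A-\varphi_A)\ast(-\varphi_B)$. Restricting this bounded map to the two factors forces $f_A-\varphi_A$ and $\varphi_B$ to be bounded, so $\varphi_A$ and $\varphi_B$ are bounded cocycles; because $E$ is finite-dimensional and hence reflexive, Proposition \ref{prop-h1b} lets me replace them by inner cocycles $\iota_{v_A}$, $\iota_{v_B}$. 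Thus membership in $\ker\Psi$ is controlled by the requirement that some split quasicocycle $h_A\ast h_B$, with $h_A=f_A-\iota_{v_A}$ bounded and $h_B$ inner, be bounded.

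The heart of the matter is the implication that a bounded split quasicocycle must have cocyclic factors, equivalently that a bounded, non-cocyclic factor map forces $h_A\ast h_B$ to be \emph{un}bounded; for the trivial module this is exactly the content of Corollary \ref{cor-qm-trivial}, and in the twisted finite-dimensional setting I expect it to be the main obstacle. The delicate point is that the unboundedness is invisible on periodic elements: on $(a_0b)^n$ the value is an orbital sum $\sum_{k=0}^{n-1}W^{k}u$, with $W$ the action of $a_0b$ and $u=h_A(a_0)$, whose boundedness only detects the component of $u$ in the fixed space of $W$. The growth instead appears along generic reduced words, where the isometrically transported increments accumulate. Here I would use finite-dimensionality decisively: the image of $\Gamma$ in the isometry group of $E$ has compact closure, so the relevant orbital averages can be treated by a spectral/ergodic argument that reduces the growth of $h_A\ast h_B$ to a linear-algebraic condition on the action, leaving only cocyclic $h_A$. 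Converting this ``generic-word growth'' into a short, self-contained estimate — rather than invoking a random-walk argument — is precisely where the advertised linear-algebraic proof must do its work.

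Finally, combining this with the infinite order of $a_0$ closes the argument: bounded cocycles on $\langle a_0\rangle$ form a finite-dimensional space (a bounded cocycle is determined by $f_A(a_0)$, constrained to the non-fixed part of the action), so $\ker(\Psi|_V)$ is finite-dimensional. Hence $\im\Psi$, and a fortiori the space of split classes, is infinite-dimensional, which is the assertion of the theorem.
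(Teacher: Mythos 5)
Your reduction is set up sensibly (restricting to pairs $(f_A,0)$ with $f_A$ supported on $\langle a_0\rangle$, splitting a trivialization as $\varphi_A\ast\varphi_B$, and using Proposition~\ref{prop-h1b} to replace the bounded factor cocycles by inner ones), but the argument stops exactly at the step that carries all the content. The assertion that a bounded split quasicocycle must have cocyclic factors --- equivalently, that $h_A\ast h_B$ is unbounded whenever the bounded map $h_A$ fails to be a cocycle and $h_B$ is inner --- is announced as ``the heart of the matter'' and then deferred to an unspecified spectral/ergodic argument; no such argument is given, and it is not clear that one exists in the generality you would need. As you yourself observe, the growth is invisible on periodic words such as $(a_0^k b)^n$, where the orbital sums $\sum_k W^k u$ stay bounded as soon as $u$ avoids the fixed space of $W$; to manufacture growth along aperiodic words you must choose the letters adaptively against an arbitrary isometric action, and turning that into a proof that the only surviving $h_A$ are cocycles is precisely the hard part. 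So the claim $\dim\ker(\Psi|_V)<\infty$ is not established and the proof is incomplete.

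The paper avoids this difficulty entirely: it never attempts to describe the space $\mathcal{B}$ of pairs whose split quasicocycle is bounded, only to show that $\mathcal{B}$ has infinite codimension in $\lia(A,E)\times\lia(B,E)$. For each prime $p$ it builds a factor map supported on the powers $a^{\pm p^i}$ whose values are \emph{pre-twisted} by the inverses of the relevant group elements, $f_A^p(a^{p^n})=(b\,w_{p,n-1})^{-1}.v$, so that on the explicit test words $w_{p,n}=ba^pba^{p^2}\cdots ba^{p^n}$ the twisting cancels and $f^p(w_{p,n})=n\cdot v$ exactly; unboundedness is then read off by direct evaluation, with no spectral input. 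Disjointness of the supports across primes kills every nontrivial linear combination, and the kernel issue is handled separately by embedding $\ker\Psi/\mathcal{B}$ into $\ZZ^1(\Gamma,E)/\ZZ^1_\bb(\Gamma,E)$, which is finite-dimensional since $\Gamma$ is finitely generated and $E$ is finite-dimensional. If you want to salvage your approach, replace your $V$ (maps on $\langle a_0\rangle$ with unconstrained values) by a family of this pre-twisted kind.
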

\begin{proof}[Proof of Theorem~\ref{thm-fd}]
Let
\[
\mathcal{L}:=\lia(A,E)\times\lia(B,E)\subseteq\QZa(A,E)\times\QZa(B,E)
\]
and let
\[
\mathcal{B}:=\left\{(f_A,f_B)\in\mathcal{L}\,|\,f_A\ast f_B\mbox{ is bounded}\right\}.
\]
The construction of spilt quasicocycles yields a map
\[
\Psi:\mathcal{L}\longrightarrow\h^2_\bb(\Gamma,E).
\]
Note that $\mathcal{B}\subseteq\ker\Psi$ since bounded quasicocycles are trivial. The statement of the theorem follows from the following two facts
\begin{lemma}
\label{lem-fd-1}
The space $\ker\Psi\,/\,\mathcal{B}$ has finite dimension.
\end{lemma}
\begin{lemma}
\label{lem-fd-2}
The space $\mathcal{L}\,/\,\mathcal{B}$ has infinite dimension.
\end{lemma}
\noindent Indeed, we have
\[
\im\Psi\quad\cong\quad\mathcal{L}\,/\ker\Psi\quad\cong\quad(\mathcal{L}\,/\,\mathcal{B})\,/\,(\ker\Psi\,/\,\mathcal{B}),
\]
where the latter space has infinite dimension.
\end{proof}
\begin{proof}[Proof of Lemma~\ref{lem-fd-1}] If $(f_A,f_B)\in\ker\Psi$ then the quasicocycle $f:=f_A\ast f_B$ has a trivialization $f=\varphi+\beta$, where $\varphi\in\ZZ^1(\Gamma,E)$ and $\beta\in\ell^\infty(\Gamma,E)$.
If $f=\varphi'+\beta'$ is another trivialization then the cocycle $\varphi-\varphi'=\beta'-\beta$ is bounded. This means that we can assign to $(f_A,f_B)$ a cocycle which is well defined up to addidtion of a bounded cocycle. Hence we have a map
\[
\ker\Psi\longrightarrow\ZZ^1(\Gamma,E)\,/\,\ZZ^1_\bb(\Gamma,E).
\]
The kernel of this map is equal to the space $\mathcal{B}$ and so we have an embedding
\[
\ker\Psi\,/\,\mathcal{B}\hooklongrightarrow\ZZ^1(\Gamma,E)\,/\,\ZZ^1_\bb(\Gamma,E).
\]
Since $\Gamma$ is finitely generated and $E$ is finite-dimensional the space $\ZZ^1(\Gamma,E)$ is finite dimensional as well, the claim follows.
\end{proof}
\begin{proof}[Proof of Lemma ~\ref{lem-fd-2}] Fix a non-zero vector $v\in E$, an element $a\in A$ of infinite order and a non-trivial element $b\in B$. For a prime number $p$ and $n\geq0$ we define words $w_{p,n}\in\Gamma$ by
\begin{align*}
w_{p,0}&:=1\\
w_{p,n}&:=ba^pba^{p^2}\cdots ba^{p^n},\quad n\geq1
\end{align*}
\textit{Claim:} For all prime numbers $p$ we can choose a bounded map $f_A^p\in\lia(A,E)$ such that the split quasicocycles $f^p:=f_A^p\ast0$ satisfy
\begin{align*}
f^p(w_{p,n})&=n\cdot v&\forall p,n\\
f^p(w_{q,n})&=0&\forall q\neq p\,\forall n.
\end{align*}
\noindent\textit{Proof of the claim}. For $g=a^{p^n}$, $n\geq1$, define
\[
f_A^p(g)=(b\,w_{p,n-1})^{-1}.v
\]
and extend to negative powers $a^{-p^i}$ in the way needed to make $f_A^p$ alternating. For all $g\in A$ which are not of the form $g=a^{\pm p^i}$ set $f_A^p(g)=0$. Using the construction of split quasicocycles we obtain
\begin{align*}
f^p(w_{p,n})&=f^p_A(w_{p,n-1})+(w_{p,n-1}b).f_A^p(w_{p,n})\\
&=f^p_A(w_{p,n-1})+v\\
&=f^p_A(w_{p,n-2})+2v\\
&=\ldots=n\cdot v.
\end{align*}
The property $f^p(w_{q,n})=0$ holds by construction and thus the claim is established.
We finally show that the intersection
\[
\mathcal{B}\,\cap\,\mathrm{span}\left\{(f_A^p,0)\,|\,\mbox{$p$ prime}\right\}
\]
of subspaces of $\mathcal{L}$ is empty, which implies the statement of the lemma. So assume that $f=\sum_j\lambda_jf^{p_j}$ is a bounded quasicocycle. Evaluating at $w_{p_j,n}$ yields the equation $\lambda_jf(w_{p_j,n})=n\cdot v$, whence $\lambda_j=0$ for all $j$.
\end{proof}

\subsection{$\ell^p$-coefficients}
For a countable group $\Gamma$ and $1\leq p\leq\infty$ we endow $\ell^p(\Gamma)$ with the usual left-regular action. That is, for $\chi\in\ell^p(\Gamma)$ and $g\in\Gamma$ we set $(g.\chi)(h)=\chi(g^{-1}h)$.
\begin{theorem}
\label{thm-lp}
Let $\Gamma$ be a countable group with a splitting $\Gamma=A\ast B$, such that $A$ contains an element of infinite order. For $1<p<\infty$ the split classes form an infinite-dimensional subspace of $\h^2_\bb(\Gamma,\ell^p(\Gamma))$. If the factor $A$ is amenable then the same holds for $p=1$.
\end{theorem}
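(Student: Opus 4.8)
The plan is to follow the blueprint of Theorem~\ref{thm-fd}, but to establish infinite-dimensionality of the split classes directly, by exhibiting an explicit infinite linearly independent family rather than by separately estimating $\ker\Psi/\mathcal{B}$ (which is no longer finite-dimensional for infinite-dimensional coefficients). Reusing the construction from the proof of Lemma~\ref{lem-fd-2} with the distinguished vector $v=\delta_e\in\ell^p(\Gamma)$, I obtain for every prime $p$ a map $f_A^p\in\lia(A,\ell^p(\Gamma))$, which is bounded because the regular action is isometric, so that $\|f_A^p(a^{p^n})\|_p=\|\delta_e\|_p=1$, and whose split quasicocycle $f^p:=f_A^p\ast 0$ satisfies $f^p(w_{p,n})=n\,\delta_e$ and $f^p(w_{q,n})=0$ for every prime $q\neq p$. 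Each $f^p$ has defect $\deff f_A^p\leq 2$ by Proposition~\ref{prop-qc}, hence represents a split class $\omega_{f^p}\in\h^2_\bb(\Gamma,\ell^p(\Gamma))$, and it then suffices to prove that the family $\{\omega_{f^p}\}$ is linearly independent.

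Suppose $\sum_j\lambda_j\omega_{f^{p_j}}=0$. By the additivity of the split construction (Proposition~\ref{prop-qc}) the combination $\sum_j\lambda_jf^{p_j}$ equals $g_A\ast 0$ with $g_A=\sum_j\lambda_jf_A^{p_j}$ bounded, and vanishing of the class means $g_A\ast 0=\varphi+\beta$ for some $\varphi\in\ZZ^1(\Gamma,\ell^p(\Gamma))$ and some bounded $\beta$. Restricting to the two factors and using the isomorphism $\ZZ^1(A,E)\times\ZZ^1(B,E)\cong\ZZ^1(\Gamma,E)$ of Proposition~\ref{prop-qc}, the restrictions $\varphi_A=\varphi|_A=g_A-\beta|_A$ and $\varphi_B=\varphi|_B=-\beta|_B$ are both bounded cocycles. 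By Proposition~\ref{prop-h1b} the bounded cocycle $\varphi_A$ is inner, $\varphi_A=\iota_{v_A}^A$ for some $v_A\in\ell^p(\Gamma)$: for $1<p<\infty$ this is the reflexive case (i), while for $p=1$ it is exactly here that the hypothesis that $A$ be amenable enters, via case (ii) together with the fact that $\ell^1(\Gamma)$ is a coefficient module. Note that no hypothesis on $B$ is used, matching the asymmetry of the statement.

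The decisive step is to evaluate the identity $g_A\ast 0=\varphi+\beta$ at $w_{p_j,n}$ and read off the coordinate at $e\in\Gamma$. The left-hand side contributes $(g_A\ast 0)(w_{p_j,n})(e)=\lambda_j n$, since $f^{p_k}(w_{p_j,n})=0$ for $k\neq j$ and $f^{p_j}(w_{p_j,n})=n\,\delta_e$, and $\beta$ contributes a quantity bounded by $\|\beta\|_\infty$; so it remains to show that the cocycle term satisfies $\varphi(w_{p_j,n})(e)=o(n)$. Writing $\varphi=\iota_{v_A}^A\ast 0+0\ast\varphi_B$ by additivity, the $B$-part contributes $\sum_{i}\varphi_B(b)(w_{p_j,i-1}^{-1})$, an absolutely convergent series because the single $\ell^p$-vector $\varphi_B(b)$ is evaluated along the pairwise distinct prefixes $w_{p_j,i-1}^{-1}$, while the $A$-part telescopes into $\sum_i\big(v_A(w_{p_j,i}^{-1})-v_A(u_i^{-1})\big)$ with $u_i=w_{p_j,i-1}b$, again over pairwise distinct group elements. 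For $p=1$ both series converge absolutely since $v_A,\varphi_B(b)\in\ell^1(\Gamma)$, so the $e$-coordinate of $\varphi(w_{p_j,n})$ stays bounded in $n$. For $1<p<\infty$ one instead bounds the whole vector: here $\varphi_B$ is inner as well, so $\varphi(w_{p_j,n})$ is a signed sum of $O(n)$ translates of the two fixed vectors $v_A,v_B$ by distinct group elements, and a bounded-overlap estimate (reducing to finitely supported approximants of $v_A,v_B$) gives $\|\varphi(w_{p_j,n})\|_p=O(n^{1/p})$, whence $|\varphi(w_{p_j,n})(e)|\leq\|\varphi(w_{p_j,n})\|_p=o(n)$. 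In both cases, dividing $\lambda_j n=\varphi(w_{p_j,n})(e)+O(1)$ by $n$ and letting $n\to\infty$ forces $\lambda_j=0$.

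The main obstacle is precisely this sublinearity of the cocycle term, and it is what distinguishes the two regimes. For $1<p<\infty$ the point is that the $\ell^p$-norm of a superposition of $\sim n$ unit translates grows only like $n^{1/p}$, and it is this sublinear gain---which collapses at $p=1$---that lets the bounded perturbation $\beta$ be outrun by the linear growth $n\,\delta_e$. At $p=1$ the gain disappears and one must isolate a single coordinate, where the decisive input is the absolute convergence of the resulting series; this $\ell^1$-summability of $v_A$ is what makes the amenability of $A$ indispensable, as it is exactly what is needed to realise $\varphi_A$ as an inner cocycle. Turning the bounded-overlap count and the convergence of these series into clean estimates is the technical heart of the argument.
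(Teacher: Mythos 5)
Your argument is correct and rests on the same two pillars as the paper's proof: the vanishing of $\h^1_\bb(A,\ell^p(\Gamma))$ (via reflexivity for $1<p<\infty$, via amenability of $A$ and the coefficient-module property for $p=1$) to make the restricted cocycle $\varphi_A$ inner, and the sublinear growth of the cocycle term along words on which the split quasicocycle grows linearly. The packaging differs in two respects. First, you import the prime-indexed family $w_{p,n}$, $f_A^p$ from Lemma~\ref{lem-fd-2} with $v=\delta_e$ and prove linear independence of countably many classes, whereas the paper parametrizes its family by all of $\ell^p(\Gamma)$ via $r_\xi(a^n)=w_{n-1}^{-1}.\xi$ and shows that $\xi\mapsto\omega_{r_\xi\ast0}$ is injective; the paper's version therefore yields the stronger conclusion that the split classes contain a linearly embedded copy of $\ell^p(\Gamma)$. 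Second, where you keep the decomposition $\varphi=\iota_{v_A}^A\ast0+0\ast\varphi_B$ and estimate the $A$-part by a telescoping and bounded-overlap argument, the paper's Lemma~\ref{lem-lp-1} disposes of it in one line by writing $\iota_v^A\ast\varphi_B=0\ast(\varphi_B-\iota_v^B)+\iota_v^\Gamma$ and absorbing the (always bounded) inner cocycle $\iota_v^\Gamma$ into $\beta$; after that a single H\"older estimate $\sum_{i<n}|\zeta(w_i^{-1}g)|\leq n^{1-1/p}\|\zeta\|_p$ finishes both cases at once. Your detour through $\|\varphi(w_{p_j,n})\|_p=O(n^{1/p})$ is the one place where extra care is needed: that bound is only literally correct for finitely supported $v_A,v_B$, and for general vectors the approximation you sketch gives merely $o(n)$ --- which suffices --- but you could avoid the issue entirely by applying H\"older to the $e$-coordinate of the $O(n)$ distinct translates, exactly as the paper does for the $B$-part.
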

\begin{corollary}
For a non-abelian free group $\F$ the split classes form an infinite dimensional subspace of $\h^2_\bb(\F,\ell^p(\F))$ for $1\leq p<\infty$.
\end{corollary}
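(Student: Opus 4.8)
The plan is to deduce the corollary from the theorem---a non-abelian free group splits as $\F=\Z\ast B$ with amenable factor $A=\Z$ of infinite order, which covers every $1\le p<\infty$---and to prove the theorem by adapting the argument for Theorem~\ref{thm-fd}. The dimension-counting of Lemmas~\ref{lem-fd-1} and~\ref{lem-fd-2} breaks down here, since $\ZZ^1(\Gamma,\ell^p(\Gamma))$ is infinite-dimensional and $\ker\Psi$ no longer has finite codimension. Instead I would exhibit one explicit infinite family of split classes and prove directly that it is linearly independent.

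Retaining the notation $\mathcal{L}=\lia(A,\ell^p(\Gamma))\times\lia(B,\ell^p(\Gamma))$ and $\Psi\colon\mathcal{L}\to\h^2_\bb(\Gamma,\ell^p(\Gamma))$, I fix the unit vector $v_0=\delta_1\in\ell^p(\Gamma)$ and reindex the primes of Lemma~\ref{lem-fd-2} as $q$, obtaining bounded alternating maps $f_A^q$ (their values are translates of $\delta_1$, hence lie in $\lia(A,\ell^p(\Gamma))$) whose split quasicocycles $f^q=f_A^q\ast 0$ satisfy $f^q(w_{q,n})=n\,v_0$, $f^q(w_{q',n})=0$ for $q'\neq q$, and $f^q(b)=0$. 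The theorem follows once I show that $\Psi$ is injective on $\mathrm{span}\{(f_A^q,0)\}$, as this produces the infinite linearly independent family $\{\omega_{f^q}\}_q$ of split classes.

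So I would assume that $f=\sum_j\lambda_j f^{q_j}$ is trivial and write $f=\varphi+\beta$ with $\varphi\in\ZZ^1(\Gamma,\ell^p(\Gamma))$ and $\beta$ bounded. By Proposition~\ref{prop-qc} the cocycle decomposes as $\varphi=\varphi_A\ast\varphi_B$ with $\varphi_A=\varphi|_A$, $\varphi_B=\varphi|_B$. Restricting $f=\varphi+\beta$ to $A$ gives $\varphi_A=f|_A-\beta|_A$, a bounded cocycle, hence inner, $\varphi_A=\iota_v^A$. This is precisely where the hypotheses enter: $\h^1_\bb(A,\ell^p(\Gamma))=0$ holds by reflexivity for $1<p<\infty$ (Proposition~\ref{prop-h1b}(i)), and by amenability of $A$ together with the coefficient-module property for $p=1$ (Proposition~\ref{prop-h1b}(ii)). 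Restricting to $B$ shows only that $z:=\varphi_B(b)=-\beta(b)$ is a bounded vector, but this is all I need, because every $B$-letter of $w_{q,n}$ equals $b$.

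The crux is then a growth estimate obtained by evaluating at $1\in\Gamma$. Expanding $\varphi=\iota_v^A\ast\varphi_B$ along the normal form of $w_{q,n}$, each summand is a translate of $v$ or of $z$ by one of the $O(n)$ distinct prefixes of $w_{q,n}$; evaluating at $1$ converts each summand into a value of $v$ or of $z$ at a distinct group element, so Hölder's inequality yields
\[
|\varphi(w_{q_j,n})(1)|\le C\,n^{1-1/p}\bigl(\|v\|_p+\|z\|_p\bigr)=o(n),
\]
the exponent $1-1/p$ being $<1$ for $p<\infty$ (the bound is $O(1)$ when $p=1$). Since $f(w_{q_j,n})(1)=\lambda_j n$ and $|\beta(w_{q_j,n})(1)|\le\|\beta\|_\infty$, the identity $\lambda_j n=\varphi(w_{q_j,n})(1)+\beta(w_{q_j,n})(1)=o(n)$ forces $\lambda_j=0$. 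The main obstacle is exactly this step: because $\ker\Psi$ is infinite-dimensional one must detect non-triviality directly, and the estimate works only because a genuine split cocycle grows sublinearly along the $w_{q,n}$ once its $A$-part is inner---which is what reflexivity, respectively amenability of $A$, guarantees---while the constructed quasicocycles grow linearly.
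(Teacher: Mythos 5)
Your deduction of the corollary is exactly the paper's: it is stated as an immediate consequence of Theorem~\ref{thm-lp}, using that a non-abelian free group splits as $\langle a\rangle\ast B$ with infinite cyclic (hence amenable) first factor, which covers $p=1$ as well. Your proof of the theorem itself is correct and runs on the same engine as the paper's: triviality of the split quasicocycle together with $\h^1_\bb(A,\ell^p(\Gamma))=0$ (Proposition~\ref{prop-h1b}; this is the content of Lemma~\ref{lem-lp-1}, which you in effect re-derive inline by making the $A$-part of the trivialization inner) reduces everything to a H\"older estimate along a word sequence with pairwise distinct prefixes, giving growth $O(n^{1-1/p})=o(n)$ at the point $1\in\Gamma$ against the linear growth $\lambda_j n$ built into the construction. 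The one genuine difference is the choice of test family: the paper feeds every $\xi\in\ell^p(\Gamma)$ into a single word sequence $w_n=aba^2b\cdots a^{n-1}ba^n$ via $\xi\mapsto r_\xi$ and thereby obtains a linear \emph{embedding} of all of $\ell^p(\Gamma)$ into the split classes, whereas you recycle the prime-indexed words $w_{q,n}$ and maps $f_A^q$ of Lemma~\ref{lem-fd-2} with $v_0=\delta_1$ and obtain a countable linearly independent family of classes. Both establish infinite-dimensionality; the paper's version yields the quantitatively stronger conclusion (a copy of the whole coefficient space sits inside the split classes), while yours has the merit of treating the finite-dimensional and $\ell^p$ cases with one and the same family of quasicocycles. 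Your opening observation---that the quotient dimension count of Lemmas~\ref{lem-fd-1} and~\ref{lem-fd-2} fails because $\ZZ^1(\Gamma,\ell^p(\Gamma))$ is infinite-dimensional---is precisely the reason the paper switches arguments between Sections 2.1 and 2.2.
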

We first establish the fact
\begin{lemma}
\label{lem-lp-1}
Let $\Gamma=A\ast B$ be a splitting, and let $f:\Gamma\longrightarrow E$ be a trivial quasicocycle which is bounded on the free factors $A$ and $B$. If either $E$ is reflexive, or $A$ is amenable and $E$ is a coefficient module, then $f$ has a trivialization of the form
\[
f=0\ast\varphi_B+\beta
\]
for some $\varphi_B\in\ZZ^1(B,E)$ and some $\beta\in\ell^\infty(\Gamma,E)$.
\end{lemma}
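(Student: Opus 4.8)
The plan is to begin with an arbitrary trivialization of $f$ and to normalize its cocycle part, exploiting the fact that on a free product every genuine $1$-cocycle splits. Since $f$ is trivial I would first write $f = \varphi + \beta$ with $\varphi \in \ZZ^1(\Gamma, E)$ and $\beta \in \ell^\infty(\Gamma, E)$. The isomorphism $\ZZ^1(A,E)\times\ZZ^1(B,E)\cong\ZZ^1(\Gamma,E)$ from Proposition~\ref{prop-qc} then lets me write $\varphi = \varphi_A\ast\varphi_B$ with $\varphi_A\in\ZZ^1(A,E)$ and $\varphi_B\in\ZZ^1(B,E)$; note that $f$ itself need not be split, only its cocycle part is.

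Next I would isolate the $A$-component. Restricting a split cocycle to $A$ recovers its first factor, since for $a\in A$ the normal form is $a$ and $(\varphi_A\ast\varphi_B)(a)=\varphi_A(a)+a.\varphi_B(1)=\varphi_A(a)$. Hence $\varphi_A=\varphi|_A=f|_A-\beta|_A$ is a difference of bounded maps — $f$ is bounded on $A$ by hypothesis and $\beta$ is bounded — so that $\varphi_A\in\ZZ^1_\bb(A,E)$ is a bounded cocycle on $A$.

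Now I would apply the vanishing of bounded first cohomology to the factor $A$. Under either hypothesis (reflexive $E$, or amenable $A$ with $E$ a coefficient module) Proposition~\ref{prop-h1b}, applied to the group $A$, gives $\h^1_\bb(A,E)=0$, so $\varphi_A$ is inner: $\varphi_A=\iota_v^A$ for some $v\in E$. This is the step that dictates the shape of the conclusion — since we can only assume control over $A$ (in the amenable case $B$ need not be amenable), only the $A$-component can be removed, and the $B$-cocycle survives.

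Finally I would globalize this over $\Gamma$. By linearity of the split construction together with the identity $\iota_v^A\ast\iota_v^B=\iota_v^\Gamma$ recorded after Proposition~\ref{prop-qc},
\[
\varphi-\iota_v^\Gamma=(\varphi_A-\iota_v^A)\ast(\varphi_B-\iota_v^B)=0\ast(\varphi_B-\iota_v^B),
\]
because $\varphi_A-\iota_v^A=0$. As inner cocycles are bounded ($\|g.v-v\|\leq 2\|v\|$ by isometry of the action), setting $\varphi_B':=\varphi_B-\iota_v^B\in\ZZ^1(B,E)$ and $\beta':=\beta+\iota_v^\Gamma\in\ell^\infty(\Gamma,E)$ yields the trivialization $f=0\ast\varphi_B'+\beta'$. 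The only genuinely delicate point is invoking Proposition~\ref{prop-h1b} for $A$ rather than for $\Gamma$, and checking that removing the inner cocycle on $A$ propagates globally without disturbing the split structure; once the identity $\iota_v^A\ast\iota_v^B=\iota_v^\Gamma$ is in hand, the rest is bookkeeping.
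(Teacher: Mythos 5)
Your proposal is correct and follows essentially the same route as the paper: pass to a trivialization $f=\varphi+\beta$, split $\varphi=\varphi_A\ast\varphi_B$, use $\h^1_\bb(A,E)=0$ to write $\varphi_A=\iota_v^A$, and absorb $\iota_v^\Gamma=\iota_v^A\ast\iota_v^B$ into the bounded part. You in fact make explicit a point the paper leaves implicit, namely that $\varphi_A=f|_A-\beta|_A$ is bounded, which is exactly what licenses the appeal to Proposition~\ref{prop-h1b}.
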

\begin{proof}
Let $f=\varphi+\beta$ be a trivialization of $f$. By assumption, the cocycle $\varphi$ splits as  $\varphi=\varphi_A\ast\varphi_B$ into bounded cocycles on the factors. By Proposition~\ref{prop-h1b} we have that $\h^1_\bb(A,E)=0$, so $\varphi_A=\iota^A_v$ for some $v\in E$. Write
\begin{align*}
f&=\iota^A_v\ast\varphi_B+\beta\\
&=(\iota^A_v\ast\varphi_B-\iota^A_v\ast\iota^B_v)+(\iota^A_v\ast\iota^B_v+\beta)\\
&=0\ast(\varphi_B-\iota_v^B)+(\iota^\Gamma_v+\beta)
\end{align*}
which is, up to renaming, a trivialization of the desired type.
\end{proof}
\begin{proof}[Proof of Theorem~\ref{thm-lp}]
We construct an embedding
\[
\ell^p(\Gamma)\hooklongrightarrow\lia(A,\ell^p(\Gamma)),\quad\xi\mapsto r_\xi
\]
such that the split quasicocycle $r_\xi\ast0$ is trivial if and only if $\xi=0$.
We begin with fixing an infinite order element $a\in A$ and a non-trivial element $b\in B$. Let $w_n\in\Gamma$ be the sequence defined by
\begin{align*}
w_0&=1\\
w_1&=a\\
w_n&=aba^2ba^3b\cdots a^{n-1}ba^n,\quad n\geq2.
\end{align*}
For $\xi\in\ell^p(\Gamma)$ we define the bounded map $r_\xi\in\lia(A,\ell^p(\Gamma))$ as follows: Set
\[
r_\xi(a^n)=w_{n-1}^{-1}.\xi,\quad n\geq1,
\]
and extend to negative powers of $a$ in the way needed to make $r_\xi$ alternating. For all $g\in A$ that are not powers of $a$ set $r_\xi(g)=0$. Assume that the split quasicocycle $f_\xi:=r_\xi\ast 0$ is trivial. Since $\ell^p$-spaces are reflexive for $1<p<\infty$, and since we assume $A$ to be amenable in case $p=1$, Lemma~\ref{lem-lp-1} yields a trivialization
\[
f_\xi=0\ast\varphi_B+\beta.
\]
We evaluate at this equation at $w_n$, where we write $\zeta:=\varphi_B(b)\in\ell^p(\Gamma)$. By construction we have $f_\xi(w_n)=n\cdot\xi$, so
\[
n\cdot\xi=w_1.\zeta+\ldots+w_{n-1}.\zeta+\beta(w_n).
\]
This is an equation of functions in $\ell^p(\Gamma)$ which we evaluate further at $g\in\Gamma$ to obtain
\[
n\cdot\xi(g)=\zeta(w_1^{-1}g)+\ldots+\zeta(w_{n-1}^{-1}g)+\beta(w_n)(g).
\]
Using the H\"older inequality we obtain
\begin{align*}
&|\zeta(w_1^{-1}g)|+\ldots+|\zeta(w_{n-1}^{-1}g)|\\
&\leq (n-1)^{1-1/p}\left(|\zeta(w_1^{-1}g)|^p+\ldots+|\zeta(w_{n-1}^{-1}g)|^p\right)^{1/p}\\
&\leq (n-1)^{1-1/p}\cdot\|\zeta\|_{\ell^p(\Gamma)}\\
&\leq n^{1-1/p}\cdot\|\zeta\|_{\ell^p(\Gamma)}.
\end{align*}
Furthermore we have
\[
|\beta(w_n)(g)|\leq\|\beta(w_n)\|_{\ell^p(\Gamma)}\leq\|\beta\|_{\ell^\infty(\Gamma,\ell^p(\Gamma))},
\]
and hence
\[
n\cdot|\xi(g)|\leq n^{1-1/p}\cdot\|\zeta\|_{\ell^p(\Gamma)}+\|\beta\|_{\ell^\infty(\Gamma,\ell^p(\Gamma))}.
\]
Dividing both sides by $n$ and letting $n$ tend to infinity finally yields $\xi=0$. Thus we have shown that the composition
\[
\ell^p(\Gamma)\xrightarrow{\quad I\quad}\lia(A,\ell^p(\Gamma))\times\lia(B,\ell^p(\Gamma))\longrightarrow\h^2_\bb(\Gamma,\ell^p(\Gamma))
\]
with $I(\xi)=(r_\xi,0)$ is an embedding. The statement follows.
\end{proof}
\begin{remark} There are certain types of coefficients for which all split classes are trivial. Indeed, for any group $\Gamma$ one has $\h^*_\bb(\Gamma,\ell^\infty(\Gamma))=0$, and the same holds more generally for relatively injective coefficient modules (\cite{Mo}, Proposition 7.4.1). Furthermore one can check that for $\Gamma=A\ast B$ the split classes in $\h^2_\bb(\Gamma,\ell^\infty(\Gamma)/\R)$ vanish. This is unfortunate, since this space is isomorphic (\cite{Mo}, Proposition 10.3.2) to the poorly understood space $\h^3_\bb(\Gamma,\R)$, which is known to be infinite dimensional for free groups (\cite{So}, Theorem 3).
\end{remark}
\begin{question} Is it true that for every reflexive $\F_2$-Banach module $E$ the split classes form an infinite dimensional subspace of $\h^2_\bb(\F_2,E)$ ?
\end{question}
\section{Split quasimorphisms}
\subsection{Embedding defect spaces}

Recall that a split quasimorphism $f=f_A\ast f_B$ on $\Gamma=A\ast B$ is defined by
\[
f(a_1b_1\cdots a_nb_n)=f_A(a_1)+f_B(b_1)+\dots+f_A(a_n)+f_B(b_n),
\]
where $f_A$ and $f_B$ are alternating quasimorphisms on the factors. We determine the homogenization $\widehat{f}$ and calculate the Gromov norm of the bounded cohomology class $\omega_f$. A non-trivial element of $\Gamma$ is called \emph{cyclically reduced} if its normal form begins with an $A$-letter and ends with a $B$-letter or vice versa. Note that this is not the standard use of the terminology.
\begin{proposition}
\label{prop-homog} The homognization of a split quasimorphism $f=f_A\ast f_B$ on $\Gamma=A\ast B$ is given by
\[
\widehat{f}(g)=
   \left\{
     \begin{array}{lll}
       \widehat{f_A}(g) & , & \mbox{if $g\in A$}\\
       \widehat{f_B}(g) & , & \mbox{if $g\in B$}\\\\
       f(g') & , &\mbox{if $g\not\in A\cup B$}\\
       & &\mbox{where $g'$ is any cyclically reduced conjugate of $g$.}
     \end{array}
   \right.
\]
\end{proposition}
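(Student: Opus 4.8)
The plan is to compute $\widehat{f}$ directly from its defining limit $\widehat{f}(g)=\lim_{n\to\infty}f(g^n)/n$, splitting into the three cases and using the conjugacy invariance of homogeneous quasimorphisms (recalled earlier) to reduce the generic case to that of a cyclically reduced element.

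First I would dispose of the cases $g\in A$ and $g\in B$. For $g\in A$ every power $g^n$ is again a single $A$-letter, so the defining formula of the split quasimorphism gives $f(g^n)=f_A(g^n)$; dividing by $n$ and passing to the limit yields $\widehat{f}(g)=\widehat{f_A}(g)$. The case $g\in B$ is symmetric. (Torsion elements cause no trouble: both sides vanish there.)

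The heart of the argument is the case of a cyclically reduced $g'$. Here I would exploit that, since the normal form of $g'$ begins and ends with letters from different factors, no merging occurs upon taking powers: the normal form of $g'^{\,n}$ is exactly the $n$-fold concatenation of that of $g'$. Because the split quasimorphism with trivial coefficients is simply the sum of the factor values $f_A$ and $f_B$ over the letters of the normal form, this produces the exact identity $f(g'^{\,n})=n\cdot f(g')$, with no bounded error term at all, so that $\widehat{f}(g')=f(g')$ and the limit is attained on the nose. For a general $g\notin A\cup B$ that is not conjugate into a factor, the conjugacy theory of free products supplies a cyclically reduced conjugate $g'$, and conjugacy invariance of $\widehat{f}$ gives $\widehat{f}(g)=\widehat{f}(g')=f(g')$.

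The step I expect to require the most care, and thus the main obstacle, is the well-definedness of the third case, namely that $f(g')$ is independent of the chosen cyclically reduced conjugate. Two cyclically reduced conjugates differ by a cyclic permutation of the normal form (the standard conjugacy criterion in free products), and the split quasimorphism sums $f_A$ over the $A$-letters and $f_B$ over the $B$-letters, so its value depends only on the multiset of letters and is therefore invariant under cyclic permutation; this settles well-definedness. I would also note that elements which are conjugate into a factor without lying in it are not literally covered by the three printed cases, but are handled by the same conjugacy invariance, which returns them to the first two cases.
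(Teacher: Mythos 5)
Your proposal is correct and follows essentially the same route as the paper: exact computation $f(g^n)=f_A(g^n)$ for factor elements, the exact identity $f(g'^{\,n})=n\cdot f(g')$ for cyclically reduced $g'$, and conjugacy invariance of $\widehat{f}$ for the general case. Your additional observations (well-definedness under cyclic permutation, which in any case follows automatically from $f(g')=\widehat{f}(g)$, and the edge case of elements conjugate into a factor) are correct refinements that the paper leaves implicit.
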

\begin{proof}
If $g\in A$ then $f(g^n)=f_A(g^n)$, so $\widehat{f}(g)=\widehat{f_A}(g)$ and likewise for $g\in B$. So assume that $g\not\in A\cup B$. For a cyclically reduced conjugate $g'$ of $g$ we have $f(g'^{\,n})=n\cdot f(g')$ by construction of $f$. By conjugacy invariance of $\widehat{f}$ we obtain $\widehat{f}(g)=\widehat{f}(g')=f(g')$.
\end{proof}
\begin{lemma}
\label{lem-deff-homog}
Let $\widehat{f}$ be the homogenization of $f=f_A\ast f_B$. We have
\[
\deff{\widehat{f}}\geq2\cdot\max\{\deff{f_A},\deff{f_B}\}.
\]
\end{lemma}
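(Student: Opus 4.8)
The plan is to reduce the statement to a pointwise estimate and then realize the defect via a carefully chosen product. By symmetry (exchanging the roles of $A$ and $B$) it suffices to prove $\deff\widehat f\ge2\deff f_A$, and for this I will show that for \emph{every} pair $x_1,x_2\in A$ one has
\[
\deff\widehat f\ \ge\ 2\,\bigl|f_A(x_1x_2)-f_A(x_1)-f_A(x_2)\bigr|;
\]
taking the supremum over all such pairs then gives the bound, since the supremum of the right-hand side over $A\times A$ equals $2\deff f_A$. Note that whenever $x_1$, $x_2$ or $x_1x_2$ is trivial the quantity inside the absolute value vanishes (because $f_A$ is alternating), so for a nonzero value all three elements are automatically nontrivial.

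Fix a nontrivial $b\in B$ and a pair $x_1,x_2\in A$ with $x_1,x_2,x_1x_2\neq1$. The heart of the construction is to pick
\[
g:=x_1\,b\,x_2\,b^{-1},\qquad gh:=(x_1x_2)\,b\,(x_1x_2)\,b^{-1},
\]
which forces $h=g^{-1}(x_1x_2)b(x_1x_2)b^{-1}$. The design is such that the normal form of $gh$ exhibits the \emph{merged} block $x_1x_2$ twice, while $g$ exhibits $x_1$ and $x_2$ as two \emph{separate} $A$-blocks. Both $g$ and $gh$ are already cyclically reduced, so by Proposition~\ref{prop-homog} together with $f_B(b^{-1})=-f_B(b)$ the $B$-contributions cancel and
\[
\widehat f(g)=f_A(x_1)+f_A(x_2),\qquad \widehat f(gh)=2\,f_A(x_1x_2).
\]

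The one nontrivial computation is $\widehat f(h)$. A direct simplification gives $h=b\,x_2^{-1}\,b^{-1}\,x_2\,b\,(x_1x_2)\,b^{-1}$, which is not cyclically reduced. Cancelling the outer $B$-block (conjugation by $b^{-1}$) yields $x_2^{-1}\,b^{-1}\,x_2\,b\,(x_1x_2)$, and then the cyclically adjacent $A$-blocks $x_1x_2$ and $x_2^{-1}$ merge to $x_1$ (conjugation by $x_2$), showing that $h$ is conjugate to the cyclically reduced word $b^{-1}x_2\,b\,x_1$. By conjugacy invariance of $\widehat f$ and Proposition~\ref{prop-homog} we get $\widehat f(h)=f_A(x_1)+f_A(x_2)$. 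Combining the three values,
\[
\widehat f(gh)-\widehat f(g)-\widehat f(h)=2\bigl(f_A(x_1x_2)-f_A(x_1)-f_A(x_2)\bigr),
\]
which is the desired estimate.

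The main obstacle is securing the factor $2$ rather than $1$. A single merge only produces $\deff\widehat f\ge\deff f_A$, and any construction in which one of $g$, $h$ lies in, or is conjugate into, the factor $A$ contaminates the estimate with the homogenized values $\widehat{f_A}$ in place of the raw values $f_A$ --- fatal precisely when $f_A$ is itself homogeneous. The device above avoids this by keeping $g$, $h$ and $gh$ all cyclically reduced, so that only the raw values $f_A$ appear, while the wrap-around cancellation in $h$ arranges for the single defect event $f_A(x_1x_2)-f_A(x_1)-f_A(x_2)$ to be counted twice. Everything beyond this is the additivity of $f$ on cyclically reduced words and the alternating property of $f_B$; the only point requiring care is the bookkeeping of the cancellations in $h$.
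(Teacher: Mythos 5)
Your proof is correct and follows essentially the same strategy as the paper's: exhibit explicit $g,h\in\Gamma$ such that $g$, $h$ and $gh$ are (conjugate to) cyclically reduced words, so that Proposition~\ref{prop-homog} and conjugacy invariance turn $\widehat{f}(g)+\widehat{f}(h)-\widehat{f}(gh)$ into $2\bigl(f_A(x_1)+f_A(x_2)-f_A(x_1x_2)\bigr)$; the only difference is your leaner choice of test words (with the merged block $x_1x_2$ appearing twice in $gh$ rather than once in each of $g$ and $h$), which has the minor added benefit of avoiding the paper's case distinction requiring an element $a\in A$ with $a^2\neq 1$. I checked the cancellation bookkeeping — in particular that $h$ is conjugate to the cyclically reduced word $b^{-1}x_2bx_1$ — and it is correct.
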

\begin{proof} We may assume that $\deff{f_A}\geq\deff{f_B}$, and further that we can choose $a\in A$ with $a^2\neq1$. (Otherwise $\QMa(A)=0$, hence $\deff f_A=\deff f_B=\deff\widehat{f}=0$ and the statement is obvious). Let $b\in B$ and $a_1,a_2\in A$ be non-trivial elements. We consider the words
\begin{align*}
g&=ab\,a_2b\,a_1b^{-1}a^{-1},\\
h&=a^{-1}b^{-1}a_2b\,a_1b\,a.
\end{align*}
for which we have
\[
\widehat{f}(g)=\widehat{f}(h)=\widehat{f}(a_2ba_1)=f(a_1a_2b)=f_A(a_1a_2)+f_B(b)
\]
by conjugation invariance of $\widehat{f}$ and Proposition~\ref{prop-homog}. Furthermore,
\begin{align*}
\widehat{f}(gh)&=\widehat{f}(aba_2ba_1b^{-1}a^{-2}b^{-1}a_2ba_1ba)\\
&=f(a^2ba_2ba_1b^{-1}a^{-2}b^{-1}a_2ba_1b)\\
&=2(f_A(a_1)+f_A(a_2)+f_B(b)),
\end{align*}
and hence
\[
\widehat{f}(g)+\widehat{f}(h)-\widehat{f}(gh)=2\cdot(f_A(a_1)+f_A(a_2)-f_A(a_1a_2))
\]
which implies
\begin{align*}
\deff{\widehat{f}}&\geq2\cdot\sup\{f_A(a_1)+f_A(a_2)-f_A(a_1a_2)\,|\,a_1,a_2\in A, a_1,a_2\neq1\}\\
&=2\cdot\deff{f_A}\\
&=2\cdot\max\{\deff{f_A},\deff{f_B}\}.\qedhere
\end{align*}
\end{proof}
\begin{theorem}
\label{gromov-norm}
Let $f=f_A\ast f_B$ be a split quasimorphism with corresponding cohomology class $\omega_f=[\partial f]_\bb$. We have
\[
\|\omega_f\|=\tfrac12\,\deff{\widehat{f}}=\deff{f}=\max\{\deff{f_A},\deff{f_B}\}.
\]
In particular, $f$ is a minimal defect representative for its class.
\end{theorem}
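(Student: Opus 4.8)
The plan is to prove the four quantities coincide by combining the lower bound from Bavard's theorem with the upper bound coming from $f$ itself, and then sandwiching everything using Lemma~\ref{lem-deff-homog}. Concretely, I would argue the following cycle of inequalities. First, by Proposition~\ref{prop-qc} applied to the trivial coefficient module $\R$, the defect of $f$ equals $\max\{\deff f_A,\deff f_B\}$, which gives the rightmost equality for free. Next, since $f$ is itself a quasimorphism representing the class $\omega_f$, the definition of the Gromov norm as an infimum of defects over bounded perturbations immediately yields the upper bound $\|\omega_f\|\le\deff f$. The nontrivial content is the reverse inequality, for which I would invoke Bavard's Theorem~\ref{thm-bavard}: $\|\omega_f\|\ge\tfrac12\deff\widehat f$.

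\noindent\textbf{Closing the loop with the homogenization.} The key step is then to relate $\deff\widehat f$ back to $\deff f$. Lemma~\ref{lem-deff-homog} supplies the inequality $\deff\widehat f\ge 2\max\{\deff f_A,\deff f_B\}=2\deff f$, so that $\tfrac12\deff\widehat f\ge\deff f$. Chaining these together,
\begin{align*}
\deff f\;\le\;\tfrac12\deff\widehat f\;\le\;\|\omega_f\|\;\le\;\deff f,
\end{align*}
where the first inequality is Lemma~\ref{lem-deff-homog}, the second is Bavard's lower bound, and the third is the trivial upper bound from $f$ being a representative. Since the two ends agree, all intermediate inequalities are equalities, giving $\|\omega_f\|=\tfrac12\deff\widehat f=\deff f$. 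Combined with $\deff f=\max\{\deff f_A,\deff f_B\}$ from Proposition~\ref{prop-qc}, this establishes the full string of equalities. The final assertion that $f$ is a minimal defect representative is then immediate, since $\deff f=\|\omega_f\|$ realizes the infimum defining the Gromov norm.

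\noindent\textbf{Where the difficulty lies.} The genuinely hard analytic input is already isolated in Lemma~\ref{lem-deff-homog}, whose proof I expect to be the main obstacle: one must produce explicit words $g,h\in\Gamma$ whose homogenized values and whose product's homogenized value conspire to extract a factor of $2$ times the defect of $f_A$. The cleverness is in choosing cyclically reduced conjugates so that Proposition~\ref{prop-homog} can be applied to evaluate $\widehat f$ directly as $f$ on a cyclically reduced representative, and in arranging that $gh$ simplifies to a word whose value doubles the relevant quasimorphism defect. Once that lemma is in hand, the present theorem requires no further combinatorial work — it is purely a matter of assembling the inequalities so that the Bavard lower bound and the obvious upper bound pinch together. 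The subtlety worth flagging is that this pinching argument proves the otherwise-open equality $\|\omega_f\|=\tfrac12\deff\widehat f$ for this specific family of quasimorphisms precisely because the upper bound $\deff f$ happens to match the Bavard lower bound, a coincidence engineered by the doubling in Lemma~\ref{lem-deff-homog}.
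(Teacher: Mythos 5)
Your proposal is correct and follows exactly the paper's argument: the same sandwich $\max\{\deff f_A,\deff f_B\}\leq\tfrac12\deff\widehat{f}\leq\|\omega_f\|\leq\deff f=\max\{\deff f_A,\deff f_B\}$, obtained from Lemma~\ref{lem-deff-homog}, Theorem~\ref{thm-bavard}, the definition of the Gromov norm, and Proposition~\ref{prop-qc}. Nothing to add.
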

\begin{proof} We have
\[
\max\{\deff{f_A},\deff{f_B}\}\leq\tfrac12\,\deff{\widehat{f}}\leq\|\omega_f\|\leq\deff{f}=\max\{\deff{f_A},\deff{f_B}\}
\]
by Theorem~\ref{thm-bavard} and Lemma~\ref{lem-deff-homog}.
\end{proof}
\begin{corollary}
\label{cor-qm-trivial}
For a split quasimorphism $f=f_A\ast f_B$ the following are equivalent:
\begin{enumerate}
\item[(i)]{$f$ is trivial}
\item[(ii)]{$f$ is a homomorphism}
\item[(iii)]{$f$ is homogenous}
\item[(iv)]{$f_A$ and $f_B$ are homomorphisms}
\end{enumerate}
\end{corollary}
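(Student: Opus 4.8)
The plan is to prove the cyclic chain of implications (i) $\Rightarrow$ (iv) $\Rightarrow$ (ii) $\Rightarrow$ (iii) $\Rightarrow$ (i). The guiding observation is that for an $\R$-valued quasimorphism the notions ``homomorphism'' and ``defect zero'' coincide, while ``trivial'' means exactly $\omega_f=0$, i.e. $\|\omega_f\|=0$. Thus all four conditions should collapse to the single quantitative statement $\deff f_A=\deff f_B=0$, and the translation between the cohomological, defect-theoretic and homogeneity formulations is precisely what is furnished by Theorem~\ref{gromov-norm}, which identifies $\|\omega_f\|$, $\tfrac12\deff\widehat{f}$ and $\deff f$ all with $\max\{\deff f_A,\deff f_B\}$.

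Three of the four steps are then mechanical. For (i) $\Rightarrow$ (iv) I would argue that if $f$ is trivial then $\|\omega_f\|=0$, so Theorem~\ref{gromov-norm} gives $\max\{\deff f_A,\deff f_B\}=0$; hence both factor maps have vanishing defect and are homomorphisms. For (iv) $\Rightarrow$ (ii) I invoke Proposition~\ref{prop-qc}: if $f_A,f_B$ are homomorphisms then $\deff f=\max\{\deff f_A,\deff f_B\}=0$, so $f$ is itself a homomorphism. The step (ii) $\Rightarrow$ (iii) is immediate, since a homomorphism satisfies $f(g^n)=n\,f(g)$ and is therefore homogenous.

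The one step carrying genuine content is (iii) $\Rightarrow$ (i), and I expect it to be the main (indeed only) point of substance — not because it is long, but because it is where the specific structure of split quasimorphisms enters through the factor $\tfrac12$ in Theorem~\ref{gromov-norm}. If $f$ is homogenous, then by uniqueness of the homogenization at bounded distance we have $\widehat{f}=f$, so $\deff\widehat{f}=\deff f$. Combining this with the equality $\deff f=\tfrac12\deff\widehat{f}$ of Theorem~\ref{gromov-norm} yields $\deff f=\tfrac12\deff f$, which forces $\deff f=0$. Hence $f$ is a homomorphism, in particular an honest cocycle in the bar complex, so $\omega_f=0$ and $f$ is trivial. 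This closes the cycle and establishes the equivalence of all four conditions; note that the entire argument rests on Theorem~\ref{gromov-norm}, so the real work has already been done there.
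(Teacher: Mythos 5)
Your proof is correct and follows exactly the route the paper intends: the corollary is stated without proof precisely because it is immediate from the chain of equalities $\|\omega_f\|=\tfrac12\deff\widehat{f}=\deff f=\max\{\deff f_A,\deff f_B\}$ of Theorem~\ref{gromov-norm}, which is how you use it. Your cycle of implications, including the observation that homogeneity forces $\deff f=\tfrac12\deff f=0$, is the same argument made explicit.
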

\begin{corollary} For $\Gamma=A\ast B$ the kernel of the linear map
\[
\QMa(A)\times\QMa(B)\longrightarrow\h^2_\bb(\Gamma,\R),\quad(f_A,f_B)\mapsto\omega_f
\]
with $f=f_A\ast f_B$, is equal to $\Hom(A,\R)\times\Hom(B,\R)$.
\end{corollary}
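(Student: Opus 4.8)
The plan is to reduce the statement entirely to Corollary~\ref{cor-qm-trivial}, since the kernel in question consists precisely of those pairs whose associated split quasimorphism is trivial. First I would unwind the definition of the kernel. A pair $(f_A,f_B)\in\QMa(A)\times\QMa(B)$ lies in the kernel if and only if $\omega_f=0$ in $\h^2_\bb(\Gamma,\R)$, where $f=f_A\ast f_B$. By the definition $\omega_f=[\partial f]_\bb$, this vanishing is equivalent to the quasimorphism $f$ being trivial, i.e. a bounded perturbation of a homomorphism.

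Next I would invoke the equivalence of conditions (i) and (iv) in Corollary~\ref{cor-qm-trivial}: the split quasimorphism $f=f_A\ast f_B$ is trivial if and only if both factor maps $f_A$ and $f_B$ are homomorphisms. Thus $(f_A,f_B)$ lies in the kernel exactly when $f_A\in\Hom(A,\R)$ and $f_B\in\Hom(B,\R)$.

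Finally I would check the mild compatibility needed to identify the resulting set of pairs with $\Hom(A,\R)\times\Hom(B,\R)$: every homomorphism $\varphi:A\longrightarrow\R$ satisfies $\varphi(g)+\varphi(g^{-1})=0$, hence is an alternating quasimorphism, so that $\Hom(A,\R)\subseteq\QMa(A)$ and likewise for $B$. Therefore the kernel is precisely $\Hom(A,\R)\times\Hom(B,\R)$. There is essentially no obstacle here; the whole content sits in Corollary~\ref{cor-qm-trivial}, and the only point to verify by hand is the inclusion of homomorphisms into the space of alternating quasimorphisms, which is immediate.
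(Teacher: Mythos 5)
Your argument is correct and is exactly the intended one: the paper states this corollary without proof, as an immediate consequence of the equivalence (i)~$\Leftrightarrow$~(iv) in Corollary~\ref{cor-qm-trivial} together with the fact that $\omega_f=0$ precisely when $f$ is trivial. Your extra check that $\Hom(A,\R)\subseteq\QMa(A)$ is a harmless and correct verification that the claimed kernel actually lies in the domain.
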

Since there are no bounded real-valued homomorphisms the last statement yields a linear embedding
\[
\lia(A)\oplus\lia(B)\hooklongrightarrow\h^2_\bb(\Gamma,\R).
\]
By renorming the spaces $\lia$ in a suitable way, this embedding can be made isometric. We define the defect space $\D(\Gamma)$ of a group $\Gamma$ to be the space of bounded alternating functions on $\Gamma$, equipped with the defect $\dn{\cdot}:=\deff(\cdot)$ as a norm. Appendix B contains a self-contained discussion of the basic properties of these spaces. Using this definition together with Theorem~\ref{gromov-norm} we obtain
\begin{theorem}
\label{thm-main}
For a group $\Gamma=A\ast B$ there is a linear isometric embedding
\[
\D(A)\infplus\D(B)\hooklongrightarrow\h^2_\bb(\Gamma,\R)
\]
which maps the pair $(f_A,f_B)$ to the bounded cohomology class $\omega_f$ of the split quasimorphism $f=f_A\ast f_B$.
\end{theorem}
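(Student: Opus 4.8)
The plan is to recognize that the desired embedding is nothing but the restriction, to the bounded alternating functions, of the linear map from the preceding corollary, and that Theorem~\ref{gromov-norm} has already done all the metric work. Concretely, as a vector space $\D(A)\infplus\D(B)$ is exactly $\lia(A)\oplus\lia(B)$, now carrying the norm $\|(f_A,f_B)\|=\max\{\dn{f_A},\dn{f_B}\}=\max\{\deff f_A,\deff f_B\}$. Since every bounded function is automatically a quasimorphism, we have $\lia(A)\subseteq\QMa(A)$ and likewise for $B$, so the assignment $(f_A,f_B)\mapsto\omega_f$ with $f=f_A\ast f_B$ restricts to a well-defined map $\D(A)\infplus\D(B)\longrightarrow\h^2_\bb(\Gamma,\R)$.

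First I would check linearity and well-definedness. Linearity is immediate: the construction $(f_A,f_B)\mapsto f_A\ast f_B$ is linear by Proposition~\ref{prop-qc}, and $f\mapsto\omega_f$ is linear by definition. For the kernel I would invoke the computation recorded in the preceding corollary, namely that the kernel of the map on all of $\QMa(A)\times\QMa(B)$ equals $\Hom(A,\R)\times\Hom(B,\R)$. Intersecting with $\D(A)\infplus\D(B)$ leaves only the bounded real homomorphisms, of which there are none but $0$; this same vanishing is also what makes $\dn{\cdot}$ a genuine norm rather than a semi-norm on the bounded alternating functions, as recorded in Appendix B.

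The isometry is the heart of the statement, but with Theorem~\ref{gromov-norm} in hand it is essentially a tautology. That theorem gives $\|\omega_f\|=\max\{\deff f_A,\deff f_B\}$, which is precisely the $\infplus$-norm of $(f_A,f_B)$ computed above; hence $\|\omega_f\|=\|(f_A,f_B)\|$ and the map is norm-preserving. Being a norm-preserving linear map out of a Banach space, it is automatically injective with closed image, so it is a linear isometric embedding as claimed.

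I would not expect a genuine obstacle here: the substantial content — the exact value of the Gromov norm — is already established in Theorem~\ref{gromov-norm}, and the remaining task is purely one of bookkeeping, matching the $\infplus$-norm on the domain with the computed norm on the image. The only points requiring a word of care are that $\lia\subseteq\QMa$, that $\dn{\cdot}$ is a proper norm (both routine, the latter from the vanishing of bounded homomorphisms), and the convention that $\infplus$ denotes the $\max$-norm; all of these are collected in Appendix B.
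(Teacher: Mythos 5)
Your proposal is correct and follows exactly the route the paper takes: the paper also obtains Theorem~\ref{thm-main} directly by combining the kernel computation of the preceding corollary (plus the vanishing of bounded real-valued homomorphisms) with the norm identity $\|\omega_f\|=\max\{\deff f_A,\deff f_B\}$ from Theorem~\ref{gromov-norm}, the definition of $\dn{\cdot}$ on $\D(\cdot)$, and the $\max$-norm convention for $\infplus$. No gaps.
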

\noindent Here the notation $\infplus$ stands for the direct sum equipped with the $\max$-norm.
\begin{corollary}
\label{cor-main}
If $\Gamma=A\ast B$ such that $A$ contains an infinite order element, then there is a linear isometric embedding
\[
\D(\Z)\hooklongrightarrow\h^2_\bb(\Gamma,\R).
\]
In particular, the Banach space $\h^2_\bb(\Gamma,\R)$ is non-separable.
\end{corollary}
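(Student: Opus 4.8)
The plan is to realise the embedding as a composition, pushing the known embedding of Theorem~\ref{thm-main} through the infinite-order element of $A$. First I would fix $a\in A$ of infinite order and identify $\Z$ with the cyclic subgroup $\langle a\rangle\le A$; it then suffices to produce a linear isometric embedding $\D(\Z)\hooklongrightarrow\D(A)$, since composing it with the isometric inclusion $\D(A)\hooklongrightarrow\D(A)\infplus\D(B)$, $f_A\mapsto(f_A,0)$ (isometric because of the max-norm), and with the embedding of Theorem~\ref{thm-main} yields the desired map. Concretely, to a function $\phi\in\D(\Z)$ I would associate the \emph{zero extension} $\tilde\phi:A\longrightarrow\R$ defined by $\tilde\phi(a^k):=\phi(k)$ and $\tilde\phi(g):=0$ for $g\notin\langle a\rangle$. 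This $\tilde\phi$ is bounded and alternating, hence lies in $\D(A)$, and the assignment $\phi\mapsto\tilde\phi$ is plainly linear; equivalently one may pass directly to $\phi\mapsto\omega_{\tilde\phi\ast 0}$, whose norm is $\deff\tilde\phi$ by Theorem~\ref{gromov-norm}.

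Everything then reduces to the identity $\deff\tilde\phi=\deff\phi$, and the hard part is the inequality $\deff\tilde\phi\le\deff\phi$. Here I would run through the coboundary $\partial\tilde\phi(g,h)=\tilde\phi(gh)-\tilde\phi(g)-\tilde\phi(h)$ by cases according to which of $g,h,gh$ belong to $\langle a\rangle$. Since $\langle a\rangle$ is a subgroup, a short check shows that the configuration ``exactly two of $g,h,gh$ in $\langle a\rangle$'' is impossible, so the only nonzero configurations are the one where $g,h,gh$ all lie in $\langle a\rangle$, in which case $|\partial\tilde\phi(g,h)|$ is a defect of $\phi$ and hence at most $\deff\phi$, and the ones where exactly one of $g,h,gh$ lies in $\langle a\rangle$, in which case $|\partial\tilde\phi(g,h)|=|\phi(\cdot)|\le\|\phi\|_\infty$. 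This gives $\deff\tilde\phi\le\max\{\deff\phi,\|\phi\|_\infty\}$, so the whole point is the estimate $\|\phi\|_\infty\le\deff\phi$ valid for every $\phi\in\D(\Z)$. I would obtain this from the fact that a bounded quasimorphism has vanishing homogenisation, $\widehat\phi\equiv 0$, together with the standard bound $\|\phi-\widehat\phi\|_\infty\le\deff\phi$; this is exactly one of the basic properties of defect spaces recorded in Appendix B. The reverse inequality $\deff\tilde\phi\ge\deff\phi$ is immediate because $\tilde\phi$ restricts to $\phi$ on $\langle a\rangle$ and thus realises every defect value of $\phi$. Together these give $\deff\tilde\phi=\deff\phi$, so $\phi\mapsto\tilde\phi$ is isometric and the full composition is a linear isometric embedding.

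Finally, for non-separability I would use that $\D(\Z)$ is norm-equivalent to $\lia(\Z)$, which is isometric to $\ell^\infty(\N)$ and hence non-separable. Since a Banach space that contains an isomorphic (here even isometric) copy of a non-separable space cannot itself be separable, the existence of the embedding $\D(\Z)\hooklongrightarrow\h^2_\bb(\Gamma,\R)$ forces $\h^2_\bb(\Gamma,\R)$ to be non-separable, completing the argument.
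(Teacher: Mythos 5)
Your argument is correct and follows the paper's own route: the paper likewise composes the zero-extension embedding $\D(\langle a\rangle)\hooklongrightarrow\D(A)$ (its Proposition~\ref{subgroup-emb}, proved by exactly your case analysis on which of $g,h,gh$ lie in the subgroup, with the key estimate $\|\phi\|_\infty\le\dn{\phi}$ supplied by Proposition~\ref{def-infty}) with the inclusion into $\D(A)\infplus\D(B)$ and Theorem~\ref{thm-main}, and deduces non-separability from the norm equivalence of Corollary~\ref{norm-equiv}. You have merely inlined the Appendix~B lemmas, replacing the telescoping bound of Proposition~\ref{def-infty} by the equivalent homogenisation argument.
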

\begin{proof} Let $\langle a\rangle\cong\Z$ be an infinite cyclic subgroup of $A$. The space $\D(\langle a\rangle)$ is non-separable by Corollary~\ref{norm-equiv}, and by Proposition~\ref{subgroup-emb} and the theorem we have the embeddings
\[
\D(\langle a\rangle)\hooklongrightarrow\D(A)\hooklongrightarrow\D(A)\infplus\D(B)\hooklongrightarrow\h^2_\bb(\Gamma,\R).\qedhere
\]
\end{proof}
\begin{corollary} If the group $\Gamma$ admits an epimorphism $\Gamma\longrightarrow\F_2$ then there is a linear isometric embedding
\[
\D(\Z)\infplus\D(\Z)\hooklongrightarrow\h^2_\bb(\Gamma,\R).
\]
\end{corollary}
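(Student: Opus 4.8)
The plan is to obtain the embedding as a composition of two isometric embeddings that have already been established: one arising from the free product structure of $\F_2$ itself, and one arising from Huber's pullback theorem applied to the given epimorphism.

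First I would exploit the fact that the free group of rank two splits as a free product of two infinite cyclic groups, $\F_2=\langle a\rangle\ast\langle b\rangle\cong\Z\ast\Z$. Feeding this splitting into Theorem~\ref{thm-main} with $A=B=\Z$ produces a linear isometric embedding
\[
\D(\Z)\infplus\D(\Z)\hooklongrightarrow\h^2_\bb(\F_2,\R),
\]
which sends a pair $(f_A,f_B)$ to the class $\omega_f$ of the split quasimorphism $f=f_A\ast f_B$.

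Next, given the epimorphism $\varphi:\Gamma\longrightarrow\F_2$, I would invoke Theorem~\ref{th-huber}, which supplies the isometric embedding
\[
\varphi^*:\h^2_\bb(\F_2,\R)\hooklongrightarrow\h^2_\bb(\Gamma,\R).
\]
Composing the two maps yields the desired linear isometric embedding $\D(\Z)\infplus\D(\Z)\hooklongrightarrow\h^2_\bb(\Gamma,\R)$, since a composition of linear isometric embeddings is again a linear isometric embedding, and the composition of injective maps is injective.

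There is no substantive obstacle here: the statement is a direct combination of Theorem~\ref{thm-main} and Theorem~\ref{th-huber}. The only point demanding attention is the hypothesis of Theorem~\ref{th-huber}, which is stated for countable groups. Since $\F_2$ is countable, one needs $\Gamma$ to be countable as well; this is to be understood as part of the standing assumptions on $\Gamma$ (all groups carrying an $\ell^p(\Gamma)$-module or admitting the bounded-cohomology machinery above are taken countable), so the composition applies without further qualification.
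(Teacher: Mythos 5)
Your proposal is correct and is exactly the paper's argument: the author's proof consists of the single line ``Apply Theorem~\ref{th-huber}'', i.e.\ composing the isometric embedding $\D(\Z)\infplus\D(\Z)\hooklongrightarrow\h^2_\bb(\F_2,\R)$ from Theorem~\ref{thm-main} (applied to $\F_2=\Z\ast\Z$) with the pullback $\varphi^*$. Your extra remark on countability is a reasonable reading of the standing hypotheses and does not change the argument.
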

\begin{proof} Apply Theorem~\ref{th-huber}.
\end{proof}
In what follows we apply the last statement to certain classes of groups that have been shown to (virtually) surject onto free groups.
\begin{corollary}
\label{cor-raag}
If the non-abelian group $\Gamma$ is
\begin{itemize}
\item[(i)] a subgroup of a right-angled Artin group, or
\item[(ii)] the fundamental group of a compact special cube complex,
\end{itemize}
then there is a linear isometric embedding $\D(\Z)\infplus\D(\Z)\hooklongrightarrow\h^2_\bb(\Gamma,\R)$.
\end{corollary}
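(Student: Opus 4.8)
The plan is to reduce both cases to the corollary immediately preceding this one, which produces the desired isometric embedding $\D(\Z)\infplus\D(\Z)\hooklongrightarrow\h^2_\bb(\Gamma,\R)$ as soon as $\Gamma$ admits an epimorphism onto $\F_2$. So in each case it suffices to exhibit such an epimorphism. Note that it is enough to produce an epimorphism onto \emph{some} non-abelian free group $\F_n$, since any such group surjects onto $\F_2$ (send two generators to $a,b$ and the remaining generators to $1$); composing then yields the required map $\Gamma\longrightarrow\F_2$.

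For case (i), I would invoke the results of Antol\'{\i}n-Minasyan on subgroups of right-angled Artin groups (\cite{AM}): a non-abelian subgroup of a right-angled Artin group admits an epimorphism onto a non-abelian free group. Since $\Gamma$ is assumed non-abelian, this supplies an epimorphism $\Gamma\longrightarrow\F_2$, and the preceding corollary finishes this case.

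For case (ii), I would first apply the Haglund-Wise embedding theorem (\cite{HW}): the fundamental group of a compact special cube complex embeds into a right-angled Artin group. Thus $\Gamma$ is realised as a subgroup of a right-angled Artin group, and it is still non-abelian by hypothesis, so case (i) applies verbatim.

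The argument has no genuine internal obstacle; everything rests on the two cited group-theoretic inputs, and the only point requiring care is matching hypotheses. Specifically, I must make sure that for the classes considered here the results of Antol\'{\i}n-Minasyan and Haglund-Wise yield an \emph{honest} epimorphism onto a free group, rather than one defined only on a finite-index subgroup. This is precisely the feature distinguishing the present corollary from the next one, where Agol's virtual specialness forces one to pass to a finite-index subgroup $\Gamma'$ and hence only gives an embedding into $\h^2_\bb(\Gamma',\R)$.
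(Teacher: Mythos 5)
Your proposal is correct and follows essentially the same route as the paper: case (i) via the Antol\'{\i}n--Minasyan result that a non-abelian subgroup of a right-angled Artin group surjects onto a non-abelian free group (the paper cites \cite{AM}, Corollary~1.6), case (ii) reduced to case (i) via the Haglund--Wise embedding into a right-angled Artin group (\cite{HW}, Theorem~1.1), and then the preceding corollary (i.e.\ Huber's theorem on epimorphisms inducing isometric embeddings in $\h^2_\bb$) finishes the argument. Your extra remark on passing from $\F_n$ to $\F_2$ and on the honest-versus-virtual distinction is accurate but not needed beyond what the paper does.
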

\begin{proof} The group surjects onto $\F_2$ if it is of type (i) (\cite{AM}, Corollary~1.6), and every group of type (ii) is also of type (i) (\cite{HW}, Theorem~1.1).
\end{proof}
\begin{corollary}
\label{cor-3mf}
If the group $\Gamma$
\begin{itemize}
\item[(i)] is word-hyperbolic and admits a proper and cocompact action on a CAT(0) cube complex, or
\item[(ii)] is the fundamental group of a compact hyperbolic 3-manifold,
\end{itemize} then there is a finite index subgroup $\Gamma'<\Gamma$ which admits a linear isometric embedding $\D(\Z)\infplus\D(\Z)\hooklongrightarrow\h^2_\bb(\Gamma',\R)$.
\end{corollary}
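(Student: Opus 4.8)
The plan is to reduce both cases to Corollary~\ref{cor-raag} by producing, in each case, a finite index subgroup $\Gamma'<\Gamma$ that is itself the fundamental group of a compact special cube complex. Once such a $\Gamma'$ is in hand, Corollary~\ref{cor-raag}(ii) supplies the desired isometric embedding $\D(\Z)\infplus\D(\Z)\hooklongrightarrow\h^2_\bb(\Gamma',\R)$, provided $\Gamma'$ is non-abelian. To secure the latter I would first record that $\Gamma$ is non-elementary in both cases (this is implicit in the statement, since for an elementary word-hyperbolic group $\h^2_\bb(\Gamma,\R)=0$, so the asserted embedding of the infinite-dimensional space $\D(\Z)\infplus\D(\Z)$ could not exist), and then use the elementary fact that every finite index subgroup of a non-elementary hyperbolic group is again non-elementary, hence non-abelian.

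For case (i), I would invoke Agol's virtual specialness theorem (\cite{Ag}): a word-hyperbolic group acting properly and cocompactly on a CAT(0) cube complex is virtually special, meaning that it contains a finite index subgroup isomorphic to the fundamental group of a compact special cube complex. Applying this directly to $\Gamma$ yields the required $\Gamma'$, and Corollary~\ref{cor-raag}(ii) closes the case.

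For case (ii), the strategy is to reduce to case (i) by cubulating. The fundamental group $\Gamma$ of a compact hyperbolic $3$-manifold is word-hyperbolic, and by the cubulation of hyperbolic $3$-manifold groups---Bergeron--Wise in the closed case, building on the surface subgroup theorem of Kahn--Markovi\'c, together with Wise's work in the bounded case---the group $\Gamma$ acts properly and cocompactly on a CAT(0) cube complex. Hence $\Gamma$ satisfies the hypotheses of case (i), and the conclusion follows from the previous paragraph.

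The genuinely deep content is entirely imported as a black box: Agol's theorem and the cubulation theorems for $3$-manifold groups. The only point requiring care on our side is the passage from the abstract statement ``virtually special'' to an honest finite index subgroup $\Gamma'$ that is simultaneously non-abelian and realized as the fundamental group of a compact special cube complex, so that Corollary~\ref{cor-raag} applies verbatim; this is routine once non-elementariness of $\Gamma$---and therefore of every finite index subgroup---has been noted. I therefore expect no substantive obstacle beyond correctly citing the external inputs.
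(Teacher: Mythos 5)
Your proposal is correct and follows essentially the same route as the paper: case (i) is handled by Agol's virtual specialness theorem to produce a finite index subgroup that is the fundamental group of a compact special cube complex (so that Corollary~\ref{cor-raag}(ii) applies), and case (ii) is reduced to case (i) via the cubulation of hyperbolic $3$-manifold groups (the paper cites \cite{BW}, Theorem~5.3, for this step). Your added remark about non-elementariness of $\Gamma$ and its finite index subgroups is a reasonable point of care that the paper leaves implicit.
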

\begin{proof} A group of the type (i) has a finite index subgroup which is of type (ii) of the previous corollary (\cite{Ag}, Theorem~1.1). Moreover, every group of type (ii) is of type (i) (\cite{BW}, Theorem~5.3).
\end{proof}
By a result of Epstein-Fujiwara non-elementary word-hyperbolic groups have infinite-dimensional $\h^2_\bb$ (\cite{Fu}, Theorem 1.1), thus we can ask whether every such group (virtually) admits an embeddeded defect space in its second bounded cohomology:
\begin{question}
Does every non-elementary word-hyperbolic group $\Gamma$ have a finite index subgroup $\Gamma'$ such that the space $\h^2_\bb(\Gamma',\R)$ contains an isometrically embedded copy of
\begin{itemize}
\item[(i)] $\D(\Z)$ ?
\item[(ii)] $\D(\Z)\infplus\D(\Z)$ ?
\end{itemize}
\end{question}
\subsection{Amalgamated products}
It is natural to ask whether the construction of split quasicocycles generalizes to a construction for amalgamated products $\Gamma=A\ast_C B$. Generalized counting quasimorphisms for such groups have been constructed by Fujiwara (see \cite{Fu2}). If one tries to define quasicocycles $f=f_A\ast_Cf_B$ on $\Gamma$ by using the normal form for amalgams, it turns out that the required compatibility between $f_A\in\QZa(A,E)$, $f_B\in\QZa(B,E)$ is so strong that the map $f$ actually descends to a free product quotient of $\Gamma$, more precisely to the largest natural such quotient:
\[
\pi:A\ast_C B\longrightarrow A/\langle\langle C\rangle\rangle\ast B/\langle\langle C\rangle\rangle.
\]
Here $\langle\langle C\rangle\rangle$ stands for the normal closure of $C$ in $A$ and $B$ respectively. In other words, quasicocycles constructed this way are merely pullbacks of split quasicocycles on free products. In the case of quasimorphisms we know (Theorem~\ref{th-huber}) that the above quotient map induces an isometric embedding in $\h^2_\bb$, so that we obtain the following generalization of Theorem~\ref{thm-main}:
\begin{theorem}
For an amalgamated product $\Gamma=A\ast_C B$ there is a linear isometric embedding
\[
\D(A/\langle\langle C\rangle\rangle)\infplus\D(B/\langle\langle C\rangle\rangle)\hooklongrightarrow\h^2_\bb(\Gamma,\R)
\]
which maps the pair $(f_A,f_B)$ to the bounded cohomology class $\pi^*\omega_f=\omega_{\pi^*f}$ of the pullback of the split quasimorphism $f=f_A\ast f_B$.
\end{theorem}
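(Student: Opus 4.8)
The plan is to obtain the statement by composing the embedding of Theorem~\ref{thm-main} with the pullback map of Theorem~\ref{th-huber} along the quotient $\pi$ described above. Set $\Gamma' := A/\langle\langle C\rangle\rangle \ast B/\langle\langle C\rangle\rangle$ and write $\pi : \Gamma \longrightarrow \Gamma'$ for the canonical epimorphism. Provided both quotient factors are non-trivial---so that $\Gamma'$ is a genuine free product of two non-trivial groups, as required by the split construction---Theorem~\ref{thm-main} furnishes a linear isometric embedding
\[
\Phi : \D(A/\langle\langle C\rangle\rangle) \infplus \D(B/\langle\langle C\rangle\rangle) \hooklongrightarrow \h^2_\bb(\Gamma',\R), \qquad (f_A,f_B) \mapsto \omega_f,
\]
where $f = f_A \ast f_B$ is the split quasimorphism on $\Gamma'$ determined by the pair.

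Since $\pi$ is an epimorphism of countable groups, Theorem~\ref{th-huber} shows that the induced map $\pi^* : \h^2_\bb(\Gamma',\R) \hooklongrightarrow \h^2_\bb(\Gamma,\R)$ is itself a linear isometric embedding. I would then take the desired map to be the composite $\pi^* \circ \Phi$. As the composition of two linear isometric embeddings is again a linear isometric embedding, the isometry is automatic and no new norm estimate is required; this is precisely what makes the reduction to the free product quotient worthwhile.

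It remains to check that this composite sends $(f_A,f_B)$ to the class $\omega_{\pi^* f}$ of the pulled-back quasimorphism, i.e. that $\pi^* \omega_f = \omega_{\pi^* f}$ with $\pi^* f := f \circ \pi$. This is functoriality at the level of the bounded bar complex: because $\pi$ is a homomorphism one has $\partial^1(f\circ\pi) = (\partial^1 f)\circ(\pi\times\pi)$, so $f\circ\pi$ is a quasimorphism on $\Gamma$ (of defect equal to $\deff f$, as $\pi$ is onto) and
\[
\omega_{\pi^* f} = [\partial^1(f\circ\pi)]_\bb = \pi^*[\partial^1 f]_\bb = \pi^*\omega_f .
\]
Hence $(\pi^* \circ \Phi)(f_A,f_B) = \pi^*\omega_f = \omega_{\pi^* f}$, as claimed.

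The argument is essentially bookkeeping and I do not expect a serious obstacle. The one point genuinely requiring care is the non-degeneracy needed to apply Theorem~\ref{thm-main}: if $C$ normally generates one of the factors then the corresponding quotient is trivial, $\Gamma'$ is no longer a free product with two non-trivial factors, and the split construction degenerates (for a single group the class $\omega$ of a bounded quasimorphism vanishes). One should therefore assume both $A/\langle\langle C\rangle\rangle$ and $B/\langle\langle C\rangle\rangle$ non-trivial; the countability hypothesis of Theorem~\ref{th-huber} should likewise be assumed on $A$ and $B$.
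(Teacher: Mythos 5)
Your proposal is correct and is exactly the paper's argument: the theorem is obtained by composing the isometric embedding of Theorem~\ref{thm-main} for the free product quotient $A/\langle\langle C\rangle\rangle\ast B/\langle\langle C\rangle\rangle$ with the isometric pullback $\pi^*$ of Theorem~\ref{th-huber}. The caveats you flag (non-triviality of both quotient factors, countability for Huber's theorem) are consistent with the paper's standing conventions and are reasonable to make explicit.
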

\begin{example} For the splitting $\mathrm{SL}(2,\Z)=\Z/4\Z\ast_{\Z/2\Z}\Z/6\Z$ the above quotient is equal to $\PSL(2,\Z)=\Z/2\Z\ast\Z/3\Z$. In this case we have a unique split class, see Proposition~\ref{rade}.
\end{example}
A more interesting application concerns surface groups. Let $\Sigma_{n,k}$ be the compact orientable surface of genus $m$ with $k$ boundary components, and let $\Gamma_{m,k}=\pi_1(\Sigma_{m,k})$. These groups (except the abelian ones) belong to both of the classes in Corollary~\ref{cor-raag}, so that we already know that their $\h^2_\bb$ contains a copy of $\D(\Z)\infplus\D(\Z)$. Glueing $\Sigma_{m,1}$ with $\Sigma_{n,1}$ along the boundaries yields the surface $\Sigma_{m+n,0}$. On the level of fundamental groups this corresponds to an amalgamation
\[
\Gamma_{m+n,0}=\Gamma_{m,1}\ast_{\langle\gamma\rangle}\Gamma_{n,1},
\]
where $\gamma$ is a generator for the cyclic fundamental group of the glueing curve. The corresponding free product quotient is
\[
\pi:\Gamma_{m+n,0}\longrightarrow\Gamma_{m,0}\ast\Gamma_{n,0}.
\]
which is induced by the pinching map $\Sigma_{m+n,0}\longrightarrow\Sigma_{m,0}\vee\Sigma_{n,0}$ that contracts the glueing curve to a point. We thus have the following
\begin{theorem}
\label{thm-surface}
Let $\Gamma_m$ be the fundamental group of the closed orientable surface of genus $m$. For $m,n\geq1$ there is a linear isometric embedding
\[
\D(\Gamma_m)\infplus\D(\Gamma_n)\hooklongrightarrow\h^2_\bb(\Gamma_{m+n},\R).
\]
\end{theorem}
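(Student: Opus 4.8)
The plan is to obtain the claimed embedding as the composition of two isometric maps already at our disposal: the embedding of defect spaces into the bounded cohomology of a free product (Theorem~\ref{thm-main}), and the isometric pullback induced by an epimorphism (Theorem~\ref{th-huber}). Since both ingredients are isometric, and since a composition of linear isometric embeddings is again a linear isometric embedding, the result will follow once the relevant epimorphism is identified.

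First I would fix, for $m,n\geq1$, the pinching epimorphism
\[
\pi:\Gamma_{m+n}=\Gamma_{m+n,0}\longrightarrow\Gamma_m\ast\Gamma_n
\]
described in the paragraph preceding the statement. Geometrically this is the map on fundamental groups induced by collapsing the separating curve $\gamma$ along which $\Sigma_{m,1}$ and $\Sigma_{n,1}$ are glued to form $\Sigma_{m+n,0}$. Concretely, writing $\Gamma_{m+n}=\Gamma_{m,1}\ast_{\langle\gamma\rangle}\Gamma_{n,1}$ and passing to the largest free product quotient amounts to killing the normal closure of $\gamma$; here one uses that $\Gamma_{m,1}/\langle\langle\gamma\rangle\rangle$ is precisely the closed surface group $\Gamma_m$ (and likewise for $n$), so that the quotient is indeed $\Gamma_m\ast\Gamma_n$. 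I would also record that for $m,n\geq1$ both factors $\Gamma_m$ and $\Gamma_n$ are non-trivial, so that $\Gamma_m\ast\Gamma_n$ is a genuine free product with two non-trivial factors and Theorem~\ref{thm-main} is applicable.

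Next I would assemble the composition. Theorem~\ref{thm-main}, applied to the splitting $\Gamma_m\ast\Gamma_n$, provides a linear isometric embedding
\[
\D(\Gamma_m)\infplus\D(\Gamma_n)\hooklongrightarrow\h^2_\bb(\Gamma_m\ast\Gamma_n,\R)
\]
sending $(f_A,f_B)$ to the class $\omega_f$ of the split quasimorphism $f=f_A\ast f_B$. Surface groups are finitely generated, hence countable, so Theorem~\ref{th-huber} applies to $\pi$ and yields a linear isometric embedding
\[
\pi^*:\h^2_\bb(\Gamma_m\ast\Gamma_n,\R)\hooklongrightarrow\h^2_\bb(\Gamma_{m+n},\R).
\]
Composing the two maps gives the desired embedding $\D(\Gamma_m)\infplus\D(\Gamma_n)\hooklongrightarrow\h^2_\bb(\Gamma_{m+n},\R)$, which sends $(f_A,f_B)$ to $\pi^*\omega_f=\omega_{\pi^*f}$.

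The bounded-cohomology part of the argument is thus purely formal once the two cited theorems are in place; the only genuine point requiring care is the topological identification of the free product quotient, namely that pinching the gluing curve recovers exactly $\Gamma_m\ast\Gamma_n$ and not some larger or smaller quotient. This is where I expect the main (though mild) obstacle to lie, and it is resolved by the standard fact that capping off the boundary component of $\Sigma_{m,1}$ with a disc yields the closed surface $\Sigma_m$, an operation that on fundamental groups is precisely the killing of the normal closure of the boundary curve. Since this identification is already supplied by the discussion above, the proof reduces to the formal composition just described.
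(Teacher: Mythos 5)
Your proposal is correct and follows essentially the same route as the paper: the paper derives this theorem from its general amalgamated-product statement, which is itself exactly the composition of Theorem~\ref{thm-main} for the free product $\Gamma_m\ast\Gamma_n$ with the isometric pullback $\pi^*$ of Theorem~\ref{th-huber} along the pinching epimorphism, together with the identification $\Gamma_{m,1}/\langle\langle\gamma\rangle\rangle\cong\Gamma_m$ that you also supply.
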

One can obtain such embeddings more generally for surfaces with non-empty boundary and for suitable splittings of higher dimensional manifolds along $\pi_1$-injective codimension one submanifolds.
\subsection{Actions of automorphisms}
For a group $\Gamma$ we have the natural action of $\Out(\Gamma)$ on $\h^2_\bb(\Gamma,\R)$ which we discussed in the introduction. For the cohomology class $\omega_f$ of a quasimorphism $f$ this action is given by $\tau.\omega_f=\omega_{\tau.f}$, where we write $\tau.f:=f\circ\tau^{-1}$. It turns out that on split quasimorphisms and classes there is another action by automorphisms. To describe this action we fix a splitting $\Gamma=A\ast B$ and denote by $S\subset\h^2_\bb(\Gamma,\R)$ the corresponding subspace of split classes, that is, the image of the embedding of $\D(A)\oplus\D(B)$. For $\tau\in\Aut(\Gamma)$ we have the induced splitting $\Gamma=\tau(A)\ast\tau(B)$, we denote its space of split classes by $S^\tau$. There is a natural isometric isomorphism $S\longrightarrow S^\tau$ which comes from a map on the level of quasimorphisms, namely
\[
f=f_A\ast f_B\quad\mapsto\quad f^\tau:=f_{\tau(A)}\ast f_{\tau(B)}
\]
where
\[
f_{\tau(A)}=f_A\circ \tau^{-1}|_{\tau(A)},\,\, f_{\tau(B)}=f_B\circ \tau^{-1}|_{\tau(B)},
\]
which induces
\[
S\longrightarrow S^\tau,\quad \omega_f\mapsto\omega_{f^\tau}.
\]
In general the two quasimorphisms $\tau.f$ and $f^\tau$ are not at bounded distance. However, in case of an inner automorphism $\sigma$, where we know that $\sigma.f$ is at bounded distance from $f$, the same holds true for $f^\sigma$, so that all three of $f,\sigma.f,f^\sigma$ are at bounded distance:
\begin{proposition}
\label{prop-conj}
If $\sigma$ is an inner automorphism then the split quasimorphism $f$ is at bounded distance from $f^\sigma$, in particular we have $\omega_f=\omega_{f^\sigma}$ and $S=S^\sigma$. Therefore the space $S^\tau$ only depends on the outer class of an automorphism $\tau$.
\end{proposition}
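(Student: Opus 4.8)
The plan is to reduce everything to the fact, recalled in the introduction, that an inner automorphism acts trivially on $\h^2_\bb(\Gamma,\R)$; equivalently, that for every quasimorphism $f$ and every inner $\sigma$ the translate $\sigma.f=f\circ\sigma^{-1}$ lies at bounded distance from $f$. The entire content of the proposition is therefore to identify the split quasimorphism $f^\sigma$ with the ordinary translate $\sigma.f$; once this is done, the three functions $f,\sigma.f,f^\sigma$ are automatically at pairwise bounded distance.

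First I would write $\sigma=c_x$ for conjugation by a fixed $x\in\Gamma$, so that $\sigma^{-1}(g)=x^{-1}gx$ and the induced splitting reads $\Gamma=xAx^{-1}\ast xBx^{-1}$. The crux is a normal-form computation. Take $g\in\Gamma$ and write its normal form with respect to the conjugated splitting as $g=\alpha_1\beta_1\cdots\alpha_n\beta_n$ with $\alpha_i\in\sigma(A)$ and $\beta_i\in\sigma(B)$. Conjugating each syllable by $x^{-1}$ gives $x^{-1}gx=(x^{-1}\alpha_1x)(x^{-1}\beta_1x)\cdots(x^{-1}\alpha_nx)(x^{-1}\beta_nx)$; since $\sigma^{-1}$ restricts to isomorphisms $\sigma(A)\xrightarrow{\sim}A$ and $\sigma(B)\xrightarrow{\sim}B$ carrying nontrivial syllables to nontrivial syllables, this is precisely the $A\ast B$-normal form of $x^{-1}gx$. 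As the coefficients are trivial, the split value is the unweighted sum of the factor contributions, and so
\[
f^\sigma(g)=\sum_{i}f_A(x^{-1}\alpha_ix)+\sum_{i}f_B(x^{-1}\beta_ix)=f(x^{-1}gx)=(\sigma.f)(g),
\]
that is, $f^\sigma=\sigma.f$ as functions on $\Gamma$.

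Combining this identity with conjugacy invariance shows $f^\sigma$ at bounded distance from $f$, hence $\omega_{f^\sigma}=\omega_f$. Since the isometric isomorphism $S\longrightarrow S^\sigma$ sends each $\omega_f$ to $\omega_{f^\sigma}=\omega_f$, it is the restriction of the identity of $\h^2_\bb(\Gamma,\R)$, whence $S=S^\sigma$ as subspaces.

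For the last assertion I would note that none of the above uses the specific splitting $A\ast B$: applying the same computation to the splitting $\Gamma=\tau(A)\ast\tau(B)$ and an arbitrary inner $\sigma$ shows that the split classes of $\sigma\tau(A)\ast\sigma\tau(B)$ coincide with those of $\tau(A)\ast\tau(B)$, that is $S^{\sigma\tau}=S^\tau$. Since $\mathrm{Inn}(\Gamma)$ is normal in $\Aut(\Gamma)$, two automorphisms have the same outer class precisely when they differ by an inner automorphism, and hence $S^\tau$ depends only on the outer class of $\tau$. The one point requiring care is the normal-form bookkeeping in the displayed computation—verifying that conjugation by $x^{-1}$ transports the normal form of the conjugated splitting onto that of $A\ast B$, including the possibly-trivial first and last syllables, and that passing to trivial coefficients is what strips the group-action weights out of the split-quasimorphism formula. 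The remaining steps are purely formal.
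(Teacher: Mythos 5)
Your argument is correct, and it reaches the conclusion by a slightly different route than the paper. The paper writes $g=h\,\sigma(g)\,h^{-1}$ for $\sigma=\mathrm{inn}_h$, applies the quasimorphism inequality of $f^\sigma$ twice to compare $f^\sigma(g)$ with $f^\sigma(h)+f^\sigma(\sigma(g))+f^\sigma(h^{-1})$, kills the outer terms because $f^\sigma$ is alternating, and evaluates $f^\sigma(\sigma(g))=f(g)$ on normal forms; this gives the quantitative bound $\|f^\sigma-f\|_\infty\le 2\,\deff f$. You instead extract the exact identity $f^\sigma=\sigma.f$ (your normal-form bookkeeping is right) and then invoke the bounded distance between $f$ and its inner translate. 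Two caveats. First, ``inner automorphisms act trivially on $\h^2_\bb$'' literally yields only $\omega_{\sigma.f}=\omega_f$, i.e.\ that $\sigma.f-f$ equals a homomorphism plus a bounded function; the statement you actually need, $\|\sigma.f-f\|_\infty<\infty$, is true and elementary ($|f(x^{-1}gx)-f(g)|\le 2\,\deff f+|f(x)|+|f(x^{-1})|$, or use conjugacy invariance of $\widehat{f}$), so this is only a cosmetic imprecision, but you should cite the right fact. Second, your computation of $f^\sigma=\sigma.f$ never uses that $\sigma$ is inner: taken at face value it shows $f^\tau=\tau.f$ for every $\tau\in\Aut(\Gamma)$, which conflicts with the paper's preceding assertion that in general $\tau.f$ and $f^\tau$ are not at bounded distance. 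That tension lies in the source text rather than in your proof of the inner case, but it is worth flagging and double-checking against the author's intended definition of $f^\tau$. Your deductions of $S=S^\sigma$ and of the independence of $S^\tau$ of the outer class (via normality of $\mathrm{Inn}(\Gamma)$) agree with the paper's.
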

\begin{proof} Let $\sigma=\mathrm{inn}_h:g\mapsto g^h:=h^{-1}gh$, and let $g=a_1b_1\cdots a_nb_n\in\Gamma$. We have $g=ha_1^{\,h}b_1^{\,h}\cdots a_n^{\,h}b_n^{\,h}h^{-1}$ so $f^\sigma(g)$ is at distance at most $2\cdot\deff f^\sigma$ from
\[
f^\sigma(h)+f^\sigma(a_1^{\,h}b_1^{\,h}\cdots a_n^{\,h}b_n^{\,h})+f^\sigma(h^{-1})=f^\sigma(a_1^{\,h}b_1^{\,h}\cdots a_n^{\,h}b_n^{\,h})=f(a_1a_2\cdots a_nb_n)=f(g).\qedhere
\]
\end{proof}
\begin{proposition}
\label{finite-factors}
If the group $\Gamma$ admits a splitting $\Gamma=A\ast B$ with finite factors $A$,$B$ then the subspace $S\subset\h^2_\bb(\Gamma,\R)$ is independent of the choice of a splitting.
\end{proposition}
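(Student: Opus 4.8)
The plan is to play two facts off against each other: that $\Out(\Gamma)$ stabilises the split space $S$ of the fixed splitting $\Gamma = A \ast B$, and that every other splitting of $\Gamma$ is the image of this one under an automorphism. For $\tau \in \Aut(\Gamma)$ write $S^\tau$ for the split space of the splitting $\Gamma = \tau(A) \ast \tau(B)$. Since $\tau$ carries $A \ast B$-normal forms to $\tau(A) \ast \tau(B)$-normal forms, for a split quasimorphism $f = f_A \ast f_B$ the composite $f \circ \tau^{-1}$ is exactly the split quasimorphism of $\tau(A) \ast \tau(B)$ with factor maps $f_A \circ \tau^{-1}$ and $f_B \circ \tau^{-1}$; hence the natural action satisfies $\tau.S = S^\tau$, and by Proposition~\ref{prop-conj} this subspace depends only on the image of $\tau$ in $\Out(\Gamma)$. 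The proof then reduces to two claims: \textbf{(1)} every decomposition $\Gamma = A' \ast B'$ into two non-trivial factors has the form $\tau(A) \ast \tau(B)$ for some $\tau \in \Aut(\Gamma)$; and \textbf{(2)} the stabiliser $\Stab_{\Out(\Gamma)}(S) = \{\tau : \tau.S = S\}$ is all of $\Out(\Gamma)$. Granting both, any splitting has split space $S^\tau = \tau.S = S$.

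To prove claim (1) I would invoke the uniqueness half of the Kurosh subgroup theorem. Being finite, $A$ and $B$ are freely indecomposable and not infinite cyclic, so any factors $A', B'$ as above are finite and, after possibly swapping them, conjugate to $A$ and $B$. Absorbing a conjugation into an inner automorphism reduces me to the case $\Gamma = A \ast B = A \ast B'$ with $B' = aBa^{-1}$. A short Bass--Serre argument on the tree of $A \ast B$ shows that the factor complementary to $A$ is unique up to conjugation by an element of $A$, which forces $a \in A$; the partial conjugation $\nu_a$ that fixes $A$ pointwise and conjugates $B$ by $a$ is then an automorphism carrying $A \ast B$ to $A \ast B'$, and recombining with the earlier inner automorphism produces the required $\tau$.

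To prove claim (2) I note that $\Stab_{\Out(\Gamma)}(S)$ is a subgroup, so it suffices to place a generating set of $\Out(\Gamma)$ inside it. The factor automorphisms in $\Aut(A) \times \Aut(B)$ and the factor swap (when $A \cong B$) preserve the unordered factor system $\{A, B\}$, so they satisfy $\tau(A) \ast \tau(B) = A \ast B$ and hence $S^\tau = S$; inner automorphisms are trivial in $\Out(\Gamma)$ and fix $S$ by Proposition~\ref{prop-conj}. By the Fouxe--Rabinovitch description of the automorphisms of a free product---and because finite, hence non-infinite-cyclic, factors produce no transvections---$\Aut(\Gamma)$ is generated by factor automorphisms, the swap, the partial conjugations, and inner automorphisms. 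For exactly two factors the partial conjugations are moreover redundant: a partial conjugation $\nu_a$ fixing $A$ and conjugating $B$ by an element $a \in A$ is the composite of an inner automorphism of $\Gamma$ with the factor automorphism that undoes its effect on $A$, and symmetrically for conjugations of $A$ by elements of $B$. Thus modulo inner automorphisms $\Out(\Gamma)$ is generated by factor automorphisms and the swap, all of which lie in $\Stab_{\Out(\Gamma)}(S)$.

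I expect the main obstacle to be the coordination of these two group-theoretic inputs and, above all, making transparent where finiteness is indispensable. Finiteness intervenes exactly twice: it renders the factors freely indecomposable and non-cyclic, which both underpins the Kurosh uniqueness in claim (1) and removes the transvections that would otherwise enlarge the generating set in claim (2). This is precisely why the analogous statement is false for $\Gamma = \F_2 = \Z \ast \Z$, whose infinite-cyclic factors do admit transvections and whose split space genuinely depends on the splitting---in keeping with the non-trivial stabilisers produced in Corollary~\ref{cor-infinite-stabilizer}. Finally, I would record that no ambiguity arises in passing between split quasimorphisms and their classes, since $\Hom(A,\R) = \Hom(B,\R) = 0$ for finite factors makes $S$ finite-dimensional and lets the embedding of Theorem~\ref{thm-main} identify it faithfully with $\D(A) \infplus \D(B)$.
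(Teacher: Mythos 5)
Your proof is correct, and its backbone---Kurosh uniqueness combined with Proposition~\ref{prop-conj}---is exactly the paper's argument, which is a one-liner: every splitting with finite factors is a simultaneous conjugate $A^g\ast B^g$ of the given one, and conjugate splittings have the same split space. Where you diverge is in routing everything through $\Aut(\Gamma)$ and the Fouxe--Rabinovitch generators. Your claim (2), that $\Stab_{\Out(\Gamma)}(S)=\Out(\Gamma)$, is true but superfluous for the proposition: the automorphism $\tau$ you construct in claim (1) is already a composite of an inner automorphism and a partial conjugation, and you yourself observe that for two factors a partial conjugation is inner modulo a factor automorphism, so $\tau$ stabilises $S$ with no appeal to a generating set of $\Out(\Gamma)$. (Indeed, once your Bass--Serre step shows the complement of $A$ is $B^a$ with $a\in A$, conjugation by $a$ alone carries $A\ast B$ to $A\ast B^a$, so you land directly on the paper's ``simultaneous conjugate'' statement and Proposition~\ref{prop-conj} finishes.) What your longer route buys is worthwhile nonetheless: an explicit justification of the simultaneous-conjugacy point that the paper elides (Kurosh a priori conjugates the two factors separately), a correct accounting of exactly where finiteness enters, and the stronger fact that the whole of $\Out(\Gamma)$ preserves $S$. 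One further confirmation: your opening identity, that $\tau$ carries normal forms to normal forms and hence $\tau.f=f^\tau$ on the nose, is correct as computed, so $\tau.S=S^\tau$ holds exactly and your use of Proposition~\ref{prop-conj} to pass to outer classes is legitimate.
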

\begin{proof} As a consequence of Kurosh's theorem (see, e.g., \cite{Se}, Theorem 14) every splitting of $\Gamma$ is a conjugate $A^g\ast B^g$ for some $g\in\Gamma$, so the claim follows from the previous proposition.
\end{proof}
The minimal example of a group with a non-trivial split quasimorphism is the modular group $\Gamma=\PSL(2,\Z)\cong\Z/2\Z\ast\Z/3\Z$. (Note that the infinite dihedral group $\Z/2\Z\ast\Z/2\Z$ has only trivial quasimorphisms, as it is virtually cyclic.) By the previous proposition the space of split classes $S\subset\h^2_\bb(\Gamma,\R)$ does not depend on the choice of a particular splitting. We have $\D(\Z/2\Z)\oplus\D(\Z/3\Z)\cong\{0\}\oplus\R$, so $S$ is one-dimensional. It is generated by the class of a split quasimorphism
\[
\mathfrak{R}:\PSL(2,\Z)\longrightarrow\Z
\]
which is known as the \emph{Rademacher function}, a function appearing in several different areas of mathematics. A description of $\mathfrak{R}$ as a quasimorphism, from which one can easily see that it splits, was given by Barge-Ghys (\cite{BG}, p.~246). Thus we have 
\begin{proposition}
\label{rade}
For $\Gamma=\PSL(2,\Z)$ there is, up to scaling, a unique non-zero split class $\omega_{\mathfrak{R}}\in\h^2_\bb(\Gamma,\R)$. It is the class associated to the Rademacher function.
\end{proposition}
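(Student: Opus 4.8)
The plan is to combine the defect-space embedding of Theorem~\ref{thm-main} with an elementary dimension count for the two factors, and then to match the resulting one-dimensional space with the span of $\omega_{\mathfrak{R}}$. By Proposition~\ref{finite-factors} the space of split classes $S$ does not depend on the chosen splitting, so I am free to work with the fixed splitting $\Gamma=\Z/2\Z\ast\Z/3\Z$.

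First I would compute the defect spaces of the factors. For $\Z/2\Z=\{1,s\}$ with $s^2=1$ the alternating condition forces $f(s)=-f(s^{-1})=-f(s)$, hence $f(s)=0$, and of course $f(1)=0$; thus $\D(\Z/2\Z)=0$. For $\Z/3\Z=\{1,t,t^{-1}\}$ the same condition gives $f(1)=0$ and $f(t^{-1})=-f(t)$, so an alternating function is determined by the single value $f(t)$, and $\D(\Z/3\Z)$ is one-dimensional (with defect norm $3|f(t)|$, in particular a genuine norm). Feeding this into Theorem~\ref{thm-main} yields a linear isometric embedding
\[
\D(\Z/2\Z)\infplus\D(\Z/3\Z)\;\cong\;\{0\}\infplus\R\hooklongrightarrow\h^2_\bb(\Gamma,\R)
\]
whose image is by definition $S$. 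Hence $S$ is one-dimensional.

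It then remains to exhibit a nonzero generator and to identify it with $\omega_{\mathfrak{R}}$. Here I would invoke the description of the Rademacher function due to Barge-Ghys, from which one reads off that $\mathfrak{R}$ is, up to a bounded perturbation, a split quasimorphism for the above splitting; since $\D(\Z/2\Z)=0$ it is forced to be at bounded distance from $0\ast f_B$ with $f_B$ the nonzero alternating function on $\Z/3\Z$. Consequently $\omega_{\mathfrak{R}}\in S$. As $\mathfrak{R}$ is a nontrivial quasimorphism, its class is nonzero, and since $S$ is one-dimensional every nonzero split class is a scalar multiple of $\omega_{\mathfrak{R}}$, which is exactly the asserted uniqueness up to scaling. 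The whole argument is routine apart from this last identification: the only non-elementary input is the Barge-Ghys fact that $\mathfrak{R}$ actually \emph{splits}, so that its nonzero class lands inside $S$ rather than being merely some arbitrary nontrivial element of $\h^2_\bb(\Gamma,\R)$; I expect this to be the one step genuinely relying on an external reference.
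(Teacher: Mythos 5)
Your proposal is correct and follows essentially the same route as the paper: Proposition~\ref{finite-factors} to fix the splitting, the computation $\D(\Z/2\Z)\infplus\D(\Z/3\Z)\cong\{0\}\infplus\R$ fed into Theorem~\ref{thm-main} to see that $S$ is one-dimensional, and the Barge--Ghys description of $\mathfrak{R}$ as the external input identifying the generator. Your version merely spells out the defect-space computations more explicitly than the paper does.
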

We may go further and define the total subspace $\s_\Gamma\subset\h^2_\bb(\Gamma,\R)$ of split classes of $\Gamma$ to be
\begin{align*}
\s_\Gamma:&=\mathrm{span}\{\omega\,|\,\mbox{$\omega$ is a split class for some splitting of $\Gamma$}\}\\
&=\mathrm{span}\{\omega_f\,|\,\mbox{$f$ is a split quasimorphism for some splitting of $\Gamma$}\}.
\end{align*}
The group $\Aut(\Gamma)$ acts on $\s_\Gamma$ via the linear extension of the assignment $\omega_f\mapsto\omega_{f^\tau}$, $\tau\in\Aut(\Gamma)$. By Proposition~\ref{prop-conj} these actions descend to $\Out(\Gamma)$.
\begin{example}
\begin{itemize}
\item[(i)] If $\Gamma=A\ast B$ with finite factors then $\s_\Gamma$ is equal to the finite-dimensional space $S$ associated to the given splitting, or any splitting, by Proposition~\ref{finite-factors}. Hence we have a finite-dimensional representation $\s_\Gamma$ of the finite group $\Out(\Gamma)$. Compare this to the usual representation on the infinite dimensional space $\h^2_\bb(\Gamma,\R)$.
\item[(ii)] In $\F_2$ any two splittings are related via an automorphism, so that $\s_{\F_2}=\mathrm{span}\{S^\tau\,|\,\tau\in\Out(\F_2)\}$, where $S$ is the space associated to a preferred splitting.
\end{itemize}
\end{example}
We have no good understanding of the spaces $\s_\Gamma$ in case they have infinite dimension. It would be interesting to know 
\begin{questions}
\begin{itemize}
\item[(i)] How large is the Gromov-norm closure $\overline{\s_{\F_2}}\subset\h^2_\bb(\F_2,\R)$ ?
\item[(ii)] Is the action of $\Out(\Gamma)$ on $\s_\Gamma$ by isometries, so that it extends to an isometric action on the Banach space $\overline{\s_\Gamma}$ ?
\item[(iii)] What are the possible stabilizers $\mathrm{Stab}_{\Out(\F_2)}(\omega)$ of classes $\omega\in\s_{\F_2}$ ?
\end{itemize}
\end{questions}
For the standard action of $\Out(\F_2)$ on $\h^2_\bb(\Gamma,\R)$ we are able to give a partial answer to the third of these questions. Namely we show that there exist split quasimorphisms $f$ on $\F_2=\langle a\rangle\ast\langle b\rangle$ such that $f,\widehat{f}$ and $\omega_f$ have infinite stabilizers. For this purpose we consider the automorphism
\[
\tau_n:\left\{\begin{array}{l}a\mapsto a\\b\mapsto a^nb\end{array}\right.
\]
\begin{theorem}
\label{thm-fixed-point}
Let $f=f_A\ast f_B$ be a split quasimorphism on $\F_2=\langle a\rangle\ast\langle b\rangle$, with bounded factors $f_A$, $f_B$. For each $n\in\Z\backslash\{0\}$ the following are equivalent
\begin{itemize}
\item[(i)] $\tau_n.f=f$
\item[(ii)] $\tau_n.\widehat{f}=\widehat{f}$
\item[(iii)] $\tau_n.\omega_f=\omega_f$
\item[(iv)] The function $f_A$ is $|n|$-periodic and the function $f_B$ is equal to zero.
\end{itemize}
Furthermore, if $|n|\leq2$ these conditions imply that $f=0$.
\end{theorem}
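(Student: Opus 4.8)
The plan is to establish the cycle of implications $(iv)\Rightarrow(i)\Rightarrow(ii)\Rightarrow(iii)\Rightarrow(ii)\Rightarrow(iv)$, treating three of these as routine and isolating two genuine normal-form computations. Throughout I identify $\langle a\rangle$ and $\langle b\rangle$ with $\Z$, write $f_A(a^k)$ and $f_B(b^l)$ for the (bounded, odd) factor maps, and use $\tau_n^{-1}=\tau_{-n}$, which acts by $a\mapsto a$, $b\mapsto a^{-n}b$, so that $\tau_n.f=f\circ\tau_{-n}$. For $(i)\Rightarrow(ii)$ I would note that homogenization commutes with the action, $\widehat{\tau_n.f}=\tau_n.\widehat f$, which is immediate from $\widehat f(g)=\lim_m f(g^m)/m$ and $\tau_{-n}(g^m)=(\tau_{-n}g)^m$. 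For $(ii)\Rightarrow(iii)$ I would use that $f$ and $\widehat f$ are at bounded distance, hence so are $\tau_n.f$ and $\tau_n.\widehat f$, giving $\tau_n.\omega_f=\omega_{\tau_n.f}=\omega_{\tau_n.\widehat f}=\omega_{\widehat f}=\omega_f$. For $(iii)\Rightarrow(ii)$ I would write the hypothesis as $\tau_n.f-f=h+\beta$ with $h\in\Hom(\F_2,\R)$ and $\beta$ bounded, and homogenize to get $\tau_n.\widehat f-\widehat f=h$; evaluating this homomorphism on powers of $a$ gives $h(a)=0$ (as $\tau_{-n}$ fixes $a$ and $\widehat f$ vanishes on $\langle a\rangle$), and evaluating on the cyclically reduced words $a^kb$ with $k\neq 0,n$ gives $h(b)=f_A(a^{k-n})-f_A(a^k)$, independent of $k$; boundedness of $f_A$ then forces this to vanish, so $h=0$ and $(ii)$ holds.

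The first genuine computation is $(iv)\Rightarrow(i)$. Assuming $f_B=0$ and $f_A$ is $|n|$-periodic, I would first observe that periodicity together with oddness gives $f_A(a^{\pm n})=f_A(a^0)=0$. For $g=a^{k_1}b^{l_1}\cdots a^{k_m}b^{l_m}$ I would expand $\tau_{-n}(g)$ by substituting $b\mapsto a^{-n}b$ and $b^{-1}\mapsto b^{-1}a^{n}$ and read off the $a$-syllables of the normal form. They are of two kinds: \emph{internal} syllables equal to $a^{\pm n}$ created inside the image of a $b$-block, which contribute $f_A(a^{\pm n})=0$; and \emph{modified} syllables obtained from an original $a^{k_i}$ by absorbing the adjacent $a^{\pm n}$, whose exponent is $k_i$ shifted by a multiple of $n$ and which therefore contribute $f_A(a^{k_i})$ by periodicity. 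Since $f_B=0$ kills every $b$-syllable, the possible collapse of a modified syllable to $a^0$ and the resulting merger of two $b$-blocks changes nothing, and leading or trailing $a^{\pm n}$ syllables are handled the same way. Summing gives $f(\tau_{-n}(g))=\sum_i f_A(a^{k_i})=f(g)$.

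The second genuine computation is $(ii)\Rightarrow(iv)$. Here I would invoke Proposition~\ref{prop-homog}: since $f_A,f_B$ are bounded, $\widehat f$ vanishes on $\langle a\rangle\cup\langle b\rangle$ and equals $f$ on cyclically reduced elements and their powers. Evaluating $\widehat f\circ\tau_{-n}=\widehat f$ on a short list of test elements does the job. The power $b^k$ maps to the power $(a^{-n}b)^k$ of the cyclically reduced element $a^{-n}b$, yielding $f_B(b)=f_A(a^n)$; the element $a^nb$ maps to $b$, yielding $f_B(b)=-f_A(a^n)$; together these force $f_A(a^n)=0$ and $f_B(b)=0$. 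The elements $a^kb$ with $k\neq 0,n$ then give $f_A(a^{k-n})=f_A(a^k)$, which upgrades (using $f_A(a^n)=0$ to fill the two exceptional indices) to full $|n|$-periodicity; and the elements $a^kb^l$ with $k\neq 0,n$, now that the internal $a^{-n}$-syllables and $f_B(b)$ are known to contribute nothing, give $f_B(b^l)=0$ for all $l$. Hence $f_B=0$ and $f_A$ is $|n|$-periodic. The final assertion is then elementary: an odd $|n|$-periodic function on $\Z$ with $|n|\le 2$ is zero (for $|n|=1$ it is constant and odd; for $|n|=2$ periodicity gives $f_A(a^{-1})=f_A(a)$ while oddness gives $f_A(a^{-1})=-f_A(a)$), so $f_A=0$ and thus $f=0$.

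I expect the main obstacle to be the bookkeeping in the two normal-form computations, namely tracking precisely which adjacent $a^{\pm n}$ merge into a given original syllable and correctly handling the exceptional indices $k\in\{0,n\}$ together with the leading and trailing syllables. The conceptual point that makes this manageable is that $|n|$-periodicity of $f_A$ renders every shift by a multiple of $n$ invisible to $f_A$, so all the merging introduced by $\tau_{\pm n}$ is absorbed once periodicity is in force.
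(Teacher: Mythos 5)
Your proof is correct, and it follows the same basic strategy as the paper --- evaluate the functional equation for $\widehat f$ on explicit normal forms, and prove $(iv)\Rightarrow(i)$ by tracking how $\tau_{\pm n}$ shifts the $a$-syllables by multiples of $n$ --- but the organization and the choice of test elements differ in a way worth recording. The paper proves $(iii)\Rightarrow(iv)$ in one step, working with $\widehat f\circ\tau_n=\widehat f+\varphi$ for an unknown homomorphism $\varphi$, extracting the shifted periodicity and the relation $f_B(l)=-f_A(a^n)$ from the words $a^kb^l$, and then pinning down $f_A(a^n)=0$ by one final evaluation at the commutator $[a,b]$, which is claimed to yield $f_A(a^{1+n})-f_A(a)-f_A(a^n)=0$. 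That last step does not work as written: $\tau_n([a,b])=a^{1+n}ba^{-1}b^{-1}a^{-n}$ is conjugate to $[a,b]$, and since $\widehat f$ is conjugation-invariant with $\widehat f([a,b])=0$, the evaluation only returns $0=0$ (the claimed identity is what one obtains by applying $f$, rather than $\widehat f$, to the unreduced word). Your route avoids this: you first kill the homomorphism correction by a telescoping argument on the words $a^kb$ (giving $(iii)\Rightarrow(ii)$), and then obtain $f_A(a^n)=0$ and $f_B(b)=0$ by pairing the two evaluations $b^k\mapsto(a^{-n}b)^k$ and $a^nb\mapsto b$, whose images land where $\widehat f$ is directly computable ($a^{-n}b$ is cyclically reduced, and $\widehat f$ vanishes on $\langle b\rangle$). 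This pair of test elements is exactly what is needed to close the gap in the published argument. The remainder of your proof --- the words $a^kb$ for periodicity away from the exceptional indices, $a^kb^l$ for $f_B=0$, the syllable bookkeeping for $(iv)\Rightarrow(i)$ (which is the same mechanism as the paper's factorization through $\langle a\,|\,a^n\rangle\ast\langle b\rangle$), and the elementary observation for $|n|\le2$ --- matches the paper's reasoning and is complete.
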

\begin{corollary}
\label{cor-infinite-stabilizer}
If $f_A\in\D(\langle a\rangle)$ is periodic then for $f=f_A\ast0$ the stabilizers $\Stab_{\Aut(\F_2)}(f)$, $\Stab_{\Out(\F_2)}(\widehat{f})$ and $\Stab_{\Out(\F_2)}(\omega_f)$ are infinite.
\end{corollary}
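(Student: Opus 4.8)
The plan is to read the statement off directly from Theorem~\ref{thm-fixed-point}, combined with the elementary fact that a periodic function has infinitely many periods. Write $f=f_A\ast 0$, so that $f_B=0$ by hypothesis, and let $d\geq 1$ be a period of $f_A$. Since a $d$-periodic function on $\langle a\rangle\cong\Z$ is automatically $kd$-periodic for every $k\geq 1$, condition (iv) of Theorem~\ref{thm-fixed-point} holds for every $n$ of the form $n=kd$. Applying the equivalences (i)$\Leftrightarrow$(ii)$\Leftrightarrow$(iii)$\Leftrightarrow$(iv) of that theorem, I would conclude that $\tau_{kd}.f=f$, $\tau_{kd}.\widehat{f}=\widehat{f}$ and $\tau_{kd}.\omega_f=\omega_f$ hold simultaneously for all $k\geq 1$.

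It then remains to check that this family of fixed-point relations produces infinitely many \emph{distinct} elements in each of the three stabilizer groups. For $\Stab_{\Aut(\F_2)}(f)$ this is immediate: the automorphisms $\tau_{kd}$ send $b$ to $a^{kd}b$ and are therefore pairwise distinct in $\Aut(\F_2)$, so the stabilizer is infinite.

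The one genuinely non-trivial point — and the step I expect to be the main obstacle — is to verify that the outer classes $[\tau_{kd}]\in\Out(\F_2)$ are pairwise distinct, since a priori two of the $\tau_{kd}$ could differ by an inner automorphism and collapse to a single element of $\Out(\F_2)$. To rule this out I would use that inner automorphisms act trivially on the abelianization $H_1(\F_2)\cong\Z^2$: the automorphism $\tau_n$ induces on $H_1$ the map $\bar a\mapsto\bar a$, $\bar b\mapsto n\bar a+\bar b$, which depends on $n$. Hence an equality $[\tau_{kd}]=[\tau_{k'd}]$ in $\Out(\F_2)$ would force the induced maps on $H_1$ to coincide, giving $k=k'$; so the classes $[\tau_{kd}]$, $k\geq 1$, are pairwise distinct. (Equivalently, one may invoke the classical isomorphism $\Out(\F_2)\cong\mathrm{GL}(2,\Z)$, under which $\tau_n$ corresponds to the matrix $\left(\begin{smallmatrix}1 & n\\ 0 & 1\end{smallmatrix}\right)$.)

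Finally I would record that both remaining actions descend to $\Out(\F_2)$: the action on $\widehat{f}\in\HQM(\F_2)$ because homogeneous quasimorphisms are conjugacy invariant and hence fixed by inner automorphisms, and the action on $\omega_f\in\h^2_\bb(\F_2,\R)$ by the general fact recalled in the introduction that inner automorphisms act trivially on bounded cohomology. Combining this with the previous paragraph, the infinitely many distinct classes $[\tau_{kd}]$ lie in both $\Stab_{\Out(\F_2)}(\widehat{f})$ and $\Stab_{\Out(\F_2)}(\omega_f)$, so all three stabilizers are infinite.
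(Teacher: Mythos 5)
Your proposal is correct and is essentially the paper's argument: the paper's one-line proof observes that the stabilizers contain $\tau_n$ (or its outer class), which has infinite order in $\Aut(\F_2)$ (resp.\ $\Out(\F_2)$), and your family $\{\tau_{kd}\}_{k\geq1}$ is exactly the set of positive powers of $\tau_d$, so your distinctness check on $H_1(\F_2)\cong\Z^2$ is precisely a verification of that infinite-order claim, which the paper leaves implicit.
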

\begin{proof} By the theorem these stabilizers contain the automorphism $\tau_n$ (or its outer class) which has infinite order in $\Aut(\F_2)$ (or in $\Out(\F_2)$).
\end{proof}
\noindent Write $\mathrm{Fix}(\tau)=\{\omega\in\h^2_\bb(\Gamma,\R)\,|\,\tau.\omega=\omega\}$ for the subspace of cohomology classes that are invariant under $\tau\in\Aut(\Gamma)$.
\begin{corollary} For $n\neq0$ the intersection of $\mathrm{Fix}(\tau_n)$ with the space of split classes $S$ is isometrically isomorphic to $\D(\Z/n\Z)$. In particular this intersection is trivial for $n\in\{\pm1,\pm2\}$.
\end{corollary}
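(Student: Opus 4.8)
The plan is to combine Theorem~\ref{thm-fixed-point} with the isometric description of $S$ provided by Theorem~\ref{thm-main}. Recall that for $\F_2=\langle a\rangle\ast\langle b\rangle$ the latter identifies $S$ isometrically with $\D(\Z)\infplus\D(\Z)$, a pair $(f_A,f_B)$ being sent to $\omega_f$ with $f=f_A\ast f_B$; since this embedding is injective, a split class determines its factor maps uniquely. First I would invoke the equivalence (iii)$\Leftrightarrow$(iv) of Theorem~\ref{thm-fixed-point} to decide which split classes lie in $\mathrm{Fix}(\tau_n)$: a class $\omega_f=\omega_{f_A\ast f_B}\in S$ satisfies $\tau_n.\omega_f=\omega_f$ exactly when $f_A$ is $|n|$-periodic and $f_B=0$. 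Under the identification $S\cong\D(\Z)\infplus\D(\Z)$ the subspace $\mathrm{Fix}(\tau_n)\cap S$ thus corresponds precisely to the set of pairs $(f_A,0)$ with $f_A\in\D(\Z)$ an $|n|$-periodic (alternating, bounded) function, and on such pairs the $\max$-norm of $\D(\Z)\infplus\D(\Z)$ reduces to $\deff f_A$.

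It then remains to identify the space $P_n$ of $|n|$-periodic alternating functions $\Z\longrightarrow\R$, equipped with the defect norm, with $\D(\Z/n\Z)$. An $|n|$-periodic function factors through the quotient $\Z\longrightarrow\Z/n\Z$, and the alternating condition $f_A(-k)=-f_A(k)$ descends to the analogous condition on $\Z/n\Z$; conversely every alternating function on $\Z/n\Z$ pulls back to an element of $P_n$. This yields a linear bijection $P_n\longrightarrow\D(\Z/n\Z)$, $f_A\mapsto\bar f_A$. The key point, and the only step requiring any care, is that this bijection is isometric for the defect norm: since $f_A$ is $|n|$-periodic, the quantity $f_A(j+k)-f_A(j)-f_A(k)$ depends only on the residues of $j$ and $k$ modulo $|n|$, so the supremum over $j,k\in\Z$ equals the supremum over $\Z/n\Z$, i.e. $\deff f_A=\dn{\bar f_A}$. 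Composing with the isometric embedding $S\hooklongrightarrow\h^2_\bb(\F_2,\R)$ then gives the desired isometric isomorphism $\mathrm{Fix}(\tau_n)\cap S\cong\D(\Z/n\Z)$.

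Finally I would settle the cases $n\in\{\pm1,\pm2\}$ by computing $\D(\Z/n\Z)$ directly. For $n=\pm1$ the group $\Z/n\Z$ is trivial, so $\D(\Z/n\Z)=0$. For $n=\pm2$ the alternating condition gives $\bar f_A(1)=-\bar f_A(-1)=-\bar f_A(1)$, because $-1=1$ in $\Z/2\Z$, whence $\bar f_A(1)=0$; together with $\bar f_A(0)=0$ this forces $\D(\Z/2\Z)=0$, so the intersection is trivial. The whole argument is essentially formal once Theorems~\ref{thm-main} and~\ref{thm-fixed-point} are in hand, and I expect no serious obstacle beyond the immediate verification that periodicity makes the defect depend only on residues.
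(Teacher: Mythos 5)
Your argument is correct and follows essentially the same route as the paper: the paper likewise combines the equivalence (iii)$\Leftrightarrow$(iv) of Theorem~\ref{thm-fixed-point} with the observation that the quotient $\Z\longrightarrow\Z/n\Z$ induces an isometric embedding $\D(\Z/n\Z)\hooklongrightarrow\D(\Z)$ whose image is exactly the $|n|$-periodic alternating functions. The only difference is that you verify this isometry by hand (the defect depending only on residues), whereas the paper cites the appendix result on pullbacks of defect spaces along epimorphisms; the content is identical.
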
 
\begin{proof} By Proposition~\ref{prop-ses} the quotient map $\langle a\rangle\longrightarrow\langle a\,|\,a^n=1\rangle\cong\Z/n\Z$ induces an isometric embedding $\D(\langle a\,|\,a^n=1\rangle)\hooklongrightarrow\D(\langle a\rangle)$, the image of which consists precisely of the $n$-periodic functions.
\end{proof}
\begin{proof}[Proof of the theorem] The last statement follows since an alternating $n$-periodic function on $\Z$ is zero when $n\in\{1,2\}$. The implications $\mbox{(i)}\Rightarrow\mbox{(ii)}\Rightarrow\mbox{(iii)}$ are obvious. In order to prove $\mbox{(iii)}\Rightarrow\mbox{(iv)}$ assume that $\tau_n.\omega_f=\omega_f$, equivalently $\tau_{-n}.\omega_f=\omega_f$, which is equivalent to
\[
\label{eq}
\widehat{f}\circ\tau_n=\widehat{f}+\varphi\tag{$\ast$}
\]
for some $\varphi\in\Hom(\F_2,\R)$. Since $\tau_{-n}=\tau_n^{-1}$ we may assume that $n\geq1$. We evaluate \eqref{eq} for different group elements, where we make repeated use of Proposition~\ref{prop-homog}. We write $f_A(k)$ instead of $f_A(a^k)$ and likewise for $B$. Since $\widehat{f}(a)=0$ the equation yields $\varphi(a)=0$.  For $k\neq0$ and $l\geq1$ let $g=a^kb^l\in\F_2$. We have $\tau_n(g)=a^{k+n}b(a^nb)^{l-1}$, so that \eqref{eq} evaluated at $g$ reads
\[
f_A(k+n)+f_B(1)+(l-1)[f_A(n)+f_B(1)]=f_A(k)+f_B(l)+l\varphi(b)
\]
which we rearrange to
\[
f_A(k+n)-f_A(n)+l[f_A(n)+f_B(1)-\varphi(b)]=f_A(k)+f_B(l).
\]
Since the right hand side is bounded as a function of $l$ the bracket vanishes, and we rearrange again to obtain
\[
f_A(k+n)=f_A(k)+[f_A(n)+f_B(l)].
\]
Since $f_A$ is bounded this implies that the bracket in this new equation vanishes, and hence that $f_A(k+n)=f_A(k)$ for all $k\neq 0$. We are left with showing that $f_A(n)=0$ which will imply that $f_A$ is $|n|$-periodic and, since the bracket in the last equation vanishes for all $l\geq1$, that $f_B=0$. To do this we evaluate \eqref{eq} on the commutator $[a,b]$. The right hand side vanishes and we have $\tau_n([a,b])=a^{1+n}ba^{-1}b^{-1}a^{-n}$, so that the evaluation yields
\[
f_A(1+n)-f_A(1)-f_A(n)=0
\]
which implies that $f_A(n)=0$, since $f_A(1+n)=f_A(1)$. We finally prove the implication $\mbox{(iv)}\Rightarrow\mbox{(i)}$. We have to show that if $f_A$ is $n$-periodic and $f_B=0$ then $f=f_A\ast f_B$ is such that for all $g\in\F_2$ we have $f(\tau_n(g))=f(g)$. Consider the quotient map $\pi:\F_2=\langle a\rangle\ast\langle b\rangle\longrightarrow\langle a\,|\,a^n=1\rangle\ast\langle b\rangle$. We have $\pi\circ\tau_n=\tau_n$, and this means that every power $a^{k_i}$ in $g=a^{k_1}b^{l_1}\cdots a^{k_n}b^{l_n}$ corresponds to a power $a^{k_i+p\cdot n}$ in the factorization of $\tau_n(g)$ for some $p\in\Z$, and all other powers of $a$ in $\tau_n(g)$ are of the form $a^{\pm n}$. Since $f_A$ is $n$-periodic we deduce that $f(g)=f_A(k_1)+\dots+f_A(k_n)=f(\tau_n(g))$.
\end{proof}
Note that the automorphisms $\tau_n$ are reducible as they fix the free factor $\langle a\rangle$. We have some evidence that supports the following
\begin{conjecture} If $\tau\in\Out(\F_2)$ is irreducible then $\tau.S\cap S=\{0\}$, in particular the stabilizer $\Stab_{\Out(\F_2)}(\omega_f)$ of every split class $\omega_f$ consists of reducible outer automorphisms.
\end{conjecture}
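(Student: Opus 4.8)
The plan is to attack this through homogenization and the dynamics of $\tau$ on geodesic currents, exploiting that for $\F_2$ the irreducible (fully irreducible) outer automorphisms are exactly the hyperbolic elements of $\Out(\F_2)\cong\mathrm{GL}_2(\Z)$. First I would record the reduction. Under the natural action $\tau.\omega_f=\omega_{f\circ\tau^{-1}}$, and since $\tau$ carries the free basis $\{a,b\}$ to the free basis $\{\tau a,\tau b\}$, the quasimorphism $f\circ\tau^{-1}$ is the split quasimorphism of the splitting $\langle\tau a\rangle\ast\langle\tau b\rangle$ built from the same factor data; hence $\tau.S=S^\tau$ is the space of split classes of that second splitting. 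By Proposition~\ref{prop-homog} the homogenization of a split quasimorphism with bounded factors vanishes on the free factors, and it is the \emph{unique} homogeneous representative of its class with this property (any other differs by a homomorphism $\varphi$, and $\varphi(a)=\varphi(b)=0$ then forces $\varphi=0$). Thus a nonzero $\omega\in S\cap\tau.S$ produces a homogeneous quasimorphism $h$ that is simultaneously of the syllable-additive form of Proposition~\ref{prop-homog} for $\{a,b\}$ and, after subtracting a single homomorphism $\varphi$, of that form for $\{\tau a,\tau b\}$. It then suffices to show $h=0$: as $h$ vanishes on $\langle a\rangle,\langle b\rangle$, being also a homomorphism would force $h=0$, whence $\omega=0$.

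Second, I would translate syllable-additivity into linearity on currents. For a cyclically reduced $w$ the value $h(w)=\sum_i f_A(a^{k_i})+\sum_i f_B(b^{l_i})$ is a bounded-coefficient combination of the counts $N_{a^k}(w),\,N_{b^l}(w)$ of maximal $a^{k}$- and $b^{l}$-syllables, each of which is a finite combination of cylinder counts and hence linear in the geodesic current of $[w]$. Consequently $h$ extends to a functional $P$ on rational currents that is linear for the $\{a,b\}$-syllable decomposition, while $h-\varphi$ extends to a functional linear for the $\{\tau a,\tau b\}$-decomposition, with $\varphi$ realized as the homology (abelianization) functional $L_\varphi$. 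Writing $Q$ for the $\{a,b\}$-linear functional carrying the second splitting's coefficients, and $\tau_\ast$ for the (linear) action of $\tau$ on currents, the equality of the two descriptions becomes the single functional identity $P=L_\varphi+Q\circ\tau_\ast^{-1}$, valid on \emph{all} currents.

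Third, I would feed in the hyperbolic dynamics. As an element of $\mathrm{GL}_2(\Z)$ an irreducible $\tau$ is Anosov with expansion factor $\lambda>1$, and its action on the projectivized space of geodesic currents has north–south dynamics with attracting and repelling fixed currents $\mu^+,\mu^-$ and $\|\tau_\ast^{\pm n}\eta\|\sim\lambda^{n}$. Evaluating $P=L_\varphi+Q\circ\tau_\ast^{-1}$ along $\tau_\ast^{\pm n}\eta$, normalizing by $\lambda^{\pm n}$, and passing to the limits $\mu^{\pm}$ yields rigidity relations of the shape
\[
P(\mu^+)=L_\varphi(\mu^+)+\tfrac1\lambda\,Q(\mu^+),\qquad P(\mu^-)=L_\varphi(\mu^-)+\lambda\,Q(\mu^-).
\]
The aim is to combine these with the vanishing of $P$ on $[a],[b]$ and of $Q\circ\tau_\ast^{-1}$ on $[\tau a],[\tau b]$, and with the fact that $\mu^{\pm}$ are filling (their syllable spectra being genuinely two-sided infinite, reflecting the continued fraction of the irrational eigendirection), so as to force the coefficient functions $f_A,f_B$ and their counterparts all to vanish.

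The step I expect to be the main obstacle — and the reason this is stated as a conjecture rather than a theorem — is precisely this last forcing, where two coupled difficulties arise. The homology term $L_\varphi$ grows at the same exponential rate $\lambda^{n}$ as the syllable length, so first-order (Lipschitz) estimates cannot separate the genuine bounded quasimorphism content of $h$ from the homomorphism $\varphi$; one needs a finer-than-leading-order expansion of the split functionals near $\mu^{\pm}$. Moreover, establishing continuity of the bounded-coefficient functionals $P,Q$ on the full current space is delicate, because under iteration of $\tau$ the mass escapes into ever longer syllables — exactly the regime supporting $\mu^{\pm}$ — so the infinite sums $\sum_k f_A(a^{k})N_{a^{k}}$ require uniform control over all syllable lengths at once. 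A complete proof would demand either this uniform spectral control of split functionals along the $\tau$-orbit, or a genuinely different invariant distinguishing the two transverse free splittings; lacking either, the statement rests only on the reducible computation of Theorem~\ref{thm-fixed-point} and on low-complexity verifications.
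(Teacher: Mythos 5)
The statement you are proving is stated in the paper as a \emph{conjecture}: the author gives no proof, saying only that there is ``some evidence'' for it, and the only rigorous result in its vicinity is the reducible computation of Theorem~\ref{thm-fixed-point}. So there is no proof to compare against, and the only question is whether your programme closes the conjecture; it does not, as you yourself concede, and the gaps are real. Your first step is correct and matches the paper's set-up: $\tau.S$ is indeed the space of split classes of the splitting $\langle\tau a\rangle\ast\langle\tau b\rangle$, and a nonzero class in $S\cap\tau.S$ yields a homogeneous quasimorphism $h$, vanishing on $\langle a\rangle\cup\langle b\rangle$, that is syllable-additive for both bases up to a homomorphism. The first genuine gap is the passage to currents. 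For factor data that is merely bounded --- the generic element of $\D(\Z)$ --- your functional $P=\sum_k f_A(a^k)N_{a^k}+\sum_l f_B(b^l)N_{b^l}$ is an infinite sum of cylinder functions with non-summable coefficients; it is well defined on rational currents but admits no continuous extension to the space of currents, and the currents $\mu^{\pm}$ are exactly limits of rational currents whose syllable lengths blow up, i.e.\ the points at which such a functional is most discontinuous. Consequently the identity $P=L_\varphi+Q\circ\tau_\ast^{-1}$ cannot be ``evaluated at $\mu^{\pm}$'' --- neither side is defined there --- and the two rigidity relations you display do not exist yet. Second, even granting continuity (say for finitely supported $f_A,f_B$), the decisive forcing step, deducing $f_A=f_B=0$ from those relations plus the vanishing on $[a],[b],[\tau a],[\tau b]$, is not carried out; that step \emph{is} the conjecture, and nothing in the proposal substitutes for it.

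Two further points limit even the scope of the strategy. First, your identification ``irreducible $=$ hyperbolic in $\mathrm{GL}_2(\Z)$'' is false for the notion the conjecture uses: finite-order elements of $\Out(\F_2)$ can be irreducible --- e.g.\ the order-three class $a\mapsto b$, $b\mapsto (ab)^{-1}$, whose homology matrix has no real eigenvector, while free factor systems of $\F_2$ have at most two components, so no proper one is invariant --- and for such $\tau$ there is no expansion factor and no north--south dynamics at all, so your approach is silent on part of the conjecture. Second, even for hyperbolic $\tau$ the dynamical picture needs care: no automorphism of $\F_2$ is atoroidal, since every automorphism preserves the conjugacy class of $[a,b]$ up to inversion; hence the rational current of $[a,b]$ is a fixed point of $\tau_\ast$ and the action on projectivized currents is not genuinely north--south, so any convergence statement $\tau_\ast^{\pm n}\eta/\lambda^{n}\to c\,\mu^{\pm}$ must exclude or account for this invariant direction. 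In summary: your reduction is sound, the programme is a reasonable line of attack with honestly identified obstructions, but it is not a proof, and the statement remains open both in the paper and after your attempt.
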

\subsection{A relation to counting quasimorphisms}
The standard example for a non-trivial quasimorphism on a free group $\F_2=\langle a,b\rangle$ is Brooks' counting quasimorphism, which is defined as follows (see \cite{B}): For $w,g\in\F_2$ we denote by $h_w(g)$ the number of occurences of $w$ as a subword of $g$, when these elements are expressed as reduced words over the given generators. If either of $w,g$ is trivial we set $h_w(g)=0$. Here we allow overlaps, so that for example $h_{aba}(ababa)=2$. The counting quasimorphism associated to the word $w$ is given by $C_w:=h_w-h_{w^{-1}}\in\QMa(\F_2)$. It is non-trivial whenever $w\not\in\{e,a^{\pm1},b^{\pm1}\}$, and in fact, the classes induced by the family $\{C_w\}_{w\in\F_2}$ span an infinite dimensional subspace of $\h^2_\bb(\F_2,\R)$. (There are however non-trivial linear dependencies, see \cite{Gr}, Assertion 5.1) Note that for every coefficient function $\lambda:\F_2\longrightarrow\R$, the infinite linear combination
\[
\sum_{w\in\F_2}\lambda(w)C_w
\]
converges to a map $f:\F_2\longrightarrow\R$, in the topology of pointwise convergence in $\Map(\F_2,\R)$. This is due to the fact that for each $g\in\F_2$, the set $\{w\in\F_2\,|\,C_w(g)\neq0\}$ is finite. The subspace $\QM(\F_2)$ is not closed in $\Map(\F_2,\R)$ and it is not clear when the limit $f$ is itself a quasimorphism. In \cite{Gr} Grigorchuk pointed out that a sufficient, but not necessary condition is that $\lambda$ be an $\ell^1$-function. In the same article he showed that a suitably chosen family of counting quasimorphisms forms a Schauder basis for the space of homogenous quasimorphisms:
\begin{theorem}[\cite{Gr}, Theorem 5.7] There exists a family $W\subset\F_2=\langle a,b\rangle$ such that for every homogenous quasimorphism $f:\F_2\longrightarrow\R$ which vanishes on the generators, there is a unique function $\alpha:W\longrightarrow\R$ with
\[
f=\sum_{w\in W}\alpha(w)\widehat{C}_w.
\]
\end{theorem}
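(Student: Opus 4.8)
The plan is to pass from quasimorphisms to their values on conjugacy classes and to analyse the homogenized counting quasimorphisms $\widehat{C}_w$ through the combinatorics of cyclic words. First I would record the basic symmetries: since $\widehat{C}_w$ is conjugacy invariant and $C_{w^{-1}}=-C_w$, one has $\widehat{C}_{w'}=\widehat{C}_w$ whenever $w'$ is a cyclic rotation of $w$, and $\widehat{C}_{w^{-1}}=-\widehat{C}_w$. Concretely, for a primitive cyclically reduced word $g$ I would verify the formula
\[
\widehat{C}_w(g)=o(w,g)-o(w^{-1},g),
\]
where $o(w,g)$ counts the positions in one period of the periodic word $g^{\infty}$ at which $w$ occurs as a subword. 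This identifies the relevant index set as the primitive cyclic words taken up to inversion, and motivates choosing $W$ to consist of suitable reduced representatives, one per relevant class (a Lyndon-type selection) with the inverse class discarded, arranged so as to eliminate the residual linear dependencies recorded in \cite{Gr}.

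The heart of the argument is a triangular inversion organised by word length, starting at length $2$ since $f$ is assumed to vanish on the generators. For $|g|=|w|=n$ with $g,w$ primitive, the formula above gives $\widehat{C}_w(g)=1$ when $[g]=[w]$ and $0$ for the remaining length-$n$ representatives, because a primitive cyclic word of length $n$ occurs in another such word exactly when the two coincide. Writing the transition matrix $M_{[g],w}=\widehat{C}_w(g)$ indexed by primitive classes $[g]$ and by $w\in W$ and ordering by length, the diagonal block is therefore the identity. I would then solve $M\alpha=f$ for the coefficient function $\alpha$, recovering each $\alpha(w)$ with $|w|=n$ from the value of $f$ on the corresponding length-$n$ class, after accounting for the contributions of the other members of $W$.

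The step I expect to be the main obstacle is precisely this inversion, because $M$ is \emph{not} triangular. Besides the harmless ``short word in long class'' entries with $|w|<|g|$, there are nonzero ``wrapping'' entries with $|w|>|g|$ — for instance $\widehat{C}_{aba}(ab)=1$, since $aba$ occurs inside the period of $(ab)^{\infty}$. These entries couple the still-undetermined long coefficients into the equation for a short class, so naive forward substitution fails. The remedy is to write $M=I+N$ and invert by the Neumann-type series $\alpha=\sum_{k\ge0}(-N)^k f$, establishing its convergence in a suitably weighted (Fr\'echet) topology on the space of homogeneous quasimorphisms; this is exactly the recursive, non-explicit mechanism alluded to above, and controlling $N$ so that the series converges is the main analytic point. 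Uniqueness of $\alpha$ then follows from the linear independence of $\{\widehat{C}_w\}_{w\in W}$, which the identity diagonal block already furnishes.
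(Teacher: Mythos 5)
First, a point of reference: the paper does not prove this statement at all --- it is quoted from Grigorchuk (\cite{Gr}, Theorem 5.7) and used as a black box. So there is no in-paper argument to compare yours against; the relevant comparison is with Grigorchuk's proof, whose decisive feature the paper itself records two sentences after the theorem. Your setup is sound as far as it goes: passing to primitive conjugacy classes modulo inversion, the occurrence formula $\widehat{C}_w(g)=o(w,g)-o(w^{-1},g)$ for cyclically reduced $g$, and the observation that the length-$n$ diagonal block is the identity are all correct and are indeed how the argument begins.

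The genuine gap is exactly where you locate it, and your proposed remedy does not close it. The Neumann series $\alpha=\sum_{k\ge0}(-N)^k f$ is not justified: for fixed $[g]$ there are words of every length occurring in $g^\infty$, so the entries of $N^k$ for $k\ge2$ are themselves infinite sums; you name no topology in which the series converges; and even granting convergence you would still need $\sum_{w\in W}\alpha(w)\widehat{C}_w(g)$ to converge and equal $f(g)$. Moreover, with wrapping entries present your uniqueness claim also fails to follow from the diagonal block alone, since arbitrarily long words contribute to short classes. The missing idea is that the wrapping entries can be made to vanish \emph{identically} by the right choice of $W$: if $w$ occurs in the periodic word $g^\infty$ with $|w|>|g|$, then $w$ has period $|g|$, i.e.\ its prefix of length $|w|-|g|$ equals its suffix of that length. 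Hence if $W$ consists only of words with no prefix equal to a suffix (your own example $w=aba$ is excluded by this), then $\widehat{C}_w(g)=0$ whenever $|w|>|g|$. The matrix becomes genuinely triangular with identity diagonal blocks, the sum $\sum_{w\in W}\alpha(w)\widehat{C}_w(g)$ is finite for each $g$, and existence and uniqueness of $\alpha$ reduce to finite forward substitution --- precisely the recursion the paper alludes to, and precisely the description of Grigorchuk's $W$ given in the paragraph following the theorem (``words that are of minimal length in their conjugacy class and that don't have a prefix equal to a suffix''). What would then remain for you to supply is the combinatorial fact that $W$ so restricted still meets every relevant primitive conjugacy class up to inversion.
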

The representation of a class in $\eh^2_\bb(\Gamma,\R)$ by a homogenous quasimorphism $f$ is unique up to homomorphisms, so that it is unique when we require $f$ to vanish on a given generating set. The homogenization of a split quasimorphism $f=f_A\ast f_B$ on $\F_2$, with bounded factors $f_A,f_B$, has the property that $\widehat{f}(a)=\widehat{f}(b)=0$ and has thus an associated coefficient function $\alpha$ from Grigorchuk's theorem. However, the computation of the precise values $\alpha(w)$, which is done recursively in the proof, turns out to be impractical even for the simplest choices for $f_A,f_B$. Our following observation says that split quasimorphisms actually admit a very explicit decomposition into a linear combination of counting quasimorphisms. For $k\geq1$ we use the abbreviations
\begin{align*}
C_{a,k}&:=C_{ba^kb}+C_{ba^kb^{-1}}+C_{b^{-1}a^kb}+C_{b^{-1}a^kb^{-1}}\\
C_{b,k}&:=C_{ab^ka}+C_{ab^ka^{-1}}+C_{a^{-1}b^ka}+C_{a^{-1}b^ka^{-1}}.
\end{align*}
\begin{theorem}Let $f=f_A\ast f_B$ be a split quasimorphism on $\F_2=\langle a\rangle\ast\langle b\rangle$ with $f_A,f_B$ bounded. Then $f$ is at bounded distance from the quasimorphism
\begin{align*}
\sum_{k=1}^\infty f_A(a^k)C_{a,k}+f_B(b^k)C_{b,k},
\end{align*}
in particular, the homogenization can be expressed as
\begin{align*}
\widehat{f}=\sum_{k=1}^\infty f_A(a^k)\widehat{C}_{a,k}+f_B(b^k)\widehat{C}_{b,k},
\end{align*}
and furthermore, if both $f_A$ and $f_B$ have finite support then $f$ is at bounded distance from a finite linear combination of counting quasimorphisms.
\end{theorem}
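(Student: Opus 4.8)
The plan is to pin down, for each $k\ge1$, exactly what the four-term sums $C_{a,k}$ and $C_{b,k}$ count, and then to match the split quasimorphism $f$ against the proposed combination syllable by syllable. The central computation is the identity
\[
C_{a,k}(g)=N_k^+(g)-N_k^-(g),
\]
where $N_k^\pm(g)$ is the number of \emph{interior} maximal $a$-syllables of $g$ equal to $a^{\pm k}$, ``interior'' meaning neither the first nor the last syllable of the reduced normal form. To verify this I would use three facts. A subword $b^{\varepsilon_1}a^mb^{\varepsilon_2}$ with $\varepsilon_1,\varepsilon_2\in\{\pm1\}$ can occur in a reduced word only if the enclosed block is a maximal $a$-syllable of length exactly $m$; an interior $a$-syllable always carries a $b^{\pm1}$ immediately on each side, hence matches exactly one of the four words $ba^kb$, $ba^kb^{-1}$, $b^{-1}a^kb$, $b^{-1}a^kb^{-1}$ (or, for $a^{-k}$, exactly one of their inverses); and overlaps such as in $\cdots ba^kba^kb\cdots$ correspond faithfully to distinct syllables, so the overlap convention for $h_w$ causes no error. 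Expanding each $C_w=h_w-h_{w^{-1}}$ and collecting the eight $h$-terms, the positive terms sum to the count of all interior $a^k$-syllables and the negative terms to the count of all interior $a^{-k}$-syllables, which is the displayed identity; $C_{b,k}$ is treated identically.

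Next I would rewrite $f$. Because $f_A$ is alternating, $f_A(a^{-k})=-f_A(a^k)$, so grouping the syllable contributions in $f(g)=\sum_i f_A(a^{k_i})+\sum_j f_B(b^{l_j})$ by absolute exponent yields
\[
f(g)=\sum_{k\ge1}f_A(a^k)\bigl(M_k^+(g)-M_k^-(g)\bigr)+\sum_{k\ge1}f_B(b^k)\bigl(P_k^+(g)-P_k^-(g)\bigr),
\]
where $M_k^\pm$ (resp.\ $P_k^\pm$) counts \emph{all} maximal $a^{\pm k}$- (resp.\ $b^{\pm k}$-) syllables. For each $g$ only finitely many $k$ contribute, so $S:=\sum_k f_A(a^k)C_{a,k}+f_B(b^k)C_{b,k}$ is a well-defined map. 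Subtracting the identity of the previous step, the bulk terms cancel and
\[
f(g)-S(g)=\sum_{s\ \mathrm{boundary}} f_\bullet(s),
\]
the sum running over the at most two boundary syllables $s$ of $g$, with $f_\bullet$ equal to $f_A$ or $f_B$ according to the type of $s$. Since $f_A$ and $f_B$ are bounded, this is at most $2\max\{\|f_A\|_\infty,\|f_B\|_\infty\}$ uniformly in $g$. As $f$ is a quasimorphism by Proposition~\ref{prop-qc}, so is $S$, and $f$ lies at bounded distance from $S$, which is the first assertion.

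For the homogenization, bounded distance gives $\widehat{f}=\widehat{S}$, so it remains to commute $\,\widehat{\cdot}\,$ with the sum defining $S$. For fixed $g$ the syllable exponents occurring in the reduced forms of the powers $g^n$ are bounded by a constant $M(g)$ independent of $n$, since cancellation occurs only at the $n-1$ seams and produces exponents that are sums of at most two bounded exponents. Hence $S(g^n)$ has at most $M(g)$ nonzero summands for every $n$, the interchange of $\lim_n\tfrac1n$ with this uniformly finite sum is legitimate, and $\widehat{C}_{a,k}(g)=\widehat{C}_{b,k}(g)=0$ for $k>M(g)$; this gives $\widehat{f}=\sum_k f_A(a^k)\widehat{C}_{a,k}+f_B(b^k)\widehat{C}_{b,k}$. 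Finally, if $f_A$ and $f_B$ have finite support then only finitely many coefficients are nonzero, so $S$ is a finite linear combination of counting quasimorphisms at bounded distance from $f$.

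I expect the only genuine work to be the combinatorial identity of the first step: the bookkeeping that the eight $h$-terms assemble precisely into the signed interior-syllable count, with correct handling of overlaps, of the signs produced by $h_w-h_{w^{-1}}$, and of the maximality forced by the flanking $b^{\pm1}$'s. Once that is secured, the remaining steps are routine, since the error is visibly the contribution of the two boundary syllables and the homogenization claim follows from the uniform finiteness of syllable exponents in the powers $g^n$.
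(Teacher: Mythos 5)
Your proposal is correct and follows essentially the same route as the paper: the core of both arguments is that each \emph{interior} maximal syllable $a^{\pm k}$ (resp.\ $b^{\pm k}$) is detected exactly once, with the right sign, by the four counting quasimorphisms assembled in $C_{a,k}$ (resp.\ $C_{b,k}$), so that $f-S$ reduces to the contribution of the at most two boundary syllables, which is uniformly bounded. Your additional justification of the homogenization identity via the uniform bound on syllable exponents in the powers $g^n$ fills in a step the paper leaves implicit, but does not change the approach.
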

It is worthwile to note that none of the words $ba^kb$, $ba^kb^{-1}$, etc. are contained in Grigorchuk's set $W$, which consists of words that are of minimal length in their conjugacy class and that don't have a prefix equal to a suffix. 
\begin{proof} Let $F$ be the function given by infinite sum in the theorem. Let, $g=a^{k_1}b^{k_2}\cdots a^{k_{n-1}}b^{k_n}\in\F_2$. The power $b^{k_2}$ is detected by exactly one of the four counting quasimorphisms appearing in $C_{b,k_2}$, depending on the signs of $k_1$ and $k_3$. It is counted with weight $f_B(b^{k_2})$, also if $k_2<0$ as both $C_{b,k_2}$ and $f_B$ are alternating. The same is true for all the powers $b^{k_2},a^{k_3},\dots,b^{k_{n-2}},a^{k_{n-1}}$. On the other hand, the quasimorphisms appearing in $F$ count nothing but these powers, so that
\[
F(g)=f(g)-f_A(a^{k_1})-f_B(b^{k_n}),
\]
which proves that the difference $F-f$ is bounded (and that $F$ is a quasimorphism).
\end{proof}
\section{Split quasi-representations}
Let $G=(G,d)$ be a group endowed with a bi-invariant metric. For a set $X$ we have an induced distance on the set of maps $X\longrightarrow G$ which is given by $d(f_1,f_2)=\sup_{x\in X}d(f_1(x),f_2(x))$. We say that $f_1,f_2$ are at bounded distance if $d(f_1,f_2)<\infty$, and we say that $f$ is bounded if it is at bounded distance from the constant map $x\mapsto e$, in which case we write $\|f\|_\infty$ for this distance. A map $\mu:\Gamma\longrightarrow G$ is called a \emph{quasi-representation} (or $\varepsilon$-representation or $\delta$-homomorphism) if the maps $\Gamma\times\Gamma\longrightarrow G$,
\[
(g,g')\mapsto \mu(gg')\qquad\mbox{and}\qquad (g,g')\mapsto \mu(g)\mu(g')
\]
 are at bounded distance. In this case the distance between these maps is denoted by $\deff\mu$. Note that quasi-representations with values in $G=\R$ are nothing but quasimorphisms. We write $\QRep(\Gamma,G)$ for the set of quasi-representations $\Gamma\longrightarrow G$ and
\[
\QRepa(\Gamma,G)=\left\{\mu\in\QRep(\Gamma,G)\,:\,\mu(g^{-1})=\mu(g)^{-1}\right\}
\]
for the subset of alternating quasi-representations. For every quasi-representation $\mu:\Gamma\longrightarrow G$ we have the associated quantity
\[
D(\mu):=\inf\{d(\mu,\rho)\,|\,\rho\in\Hom(\Gamma,G)\}
\]
which measures the minimal distance to an actual representation.

As a straightforward generalization of split-quasimorphisms we obtain split quasi-representations on $\Gamma=A\ast B$ as follows: For $\mu_A\in\QRepa(A,G)$ and $\mu_B\in\QRepa(B,G)$ we define $\mu=\mu_A\ast\mu_B:\Gamma\longrightarrow G$ by
\begin{align*}
&(\mu_A\ast\mu_B)(a_1b_1\cdots a_nb_n):=\\
&\mu_A(a_1)\mu_B(b_1)\cdots\mu_A(a_n)\mu_B(b_n).
\end{align*}
Due to the bi-invariance of the metric on $G$, the proof of Proposition~\ref{prop-qc} applies in this non-commutative setting as well and we obtain
\begin{proposition} The map $\mu=\mu_A\ast\mu_B$ is an alternating quasi-representation with $\deff \mu=\max\{\deff \mu_A,\deff \mu_B\}$. The induced map
\[
\QRepa(A,G)\times\QRepa(B,G)\longrightarrow\QRepa(\Gamma,G),\quad (\mu_A,\mu_B)\mapsto\mu
\]
extends the natural isomorphism
\[
\Hom(A,G)\times\Hom(B,G)\longrightarrow\Hom(\Gamma,G).
\]
\end{proposition}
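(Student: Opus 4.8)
The plan is to transcribe the proof of Proposition~\ref{prop-qc} into multiplicative notation: I replace the quantity $\partial f(g,h)=f(g)+g.f(h)-f(gh)$ by the displacement $d(\mu(gh),\mu(g)\mu(h))$, and I let the bi-invariance of $d$ play the role that the isometric linear $\Gamma$-action on $E$ played before. First I would record two elementary facts. The alternating hypothesis forces $\mu_A(1)=\mu_A(1)^{-1}$, hence $\mu_A(1)=e$ and likewise $\mu_B(1)=e$; and by construction $\mu$ applied to a normal form is just the ordered product of the factor values. The alternating property $\mu(g^{-1})=\mu(g)^{-1}$ then follows exactly as in Proposition~\ref{prop-qc}: writing $g=a_1b_1\cdots a_nb_n$, one inverts the word, reverses the order of the factor values, and applies $\mu_A(a^{-1})=\mu_A(a)^{-1}$ and $\mu_B(b^{-1})=\mu_B(b)^{-1}$ termwise.

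For the defect I would run the same three-case analysis on a product $gh$ according to how the two normal forms meet at the junction. If $g$ ends in an $A$-letter and $h$ begins with a $B$-letter (or vice versa) there is no interaction, the normal form of $gh$ is the concatenation, and $\mu(gh)=\mu(g)\mu(h)$ exactly, so the displacement is $0$. If the boundary letters cancel completely, say $g=g'a$ and $h=a^{-1}h'$, then $\mu(g)\mu(h)=\mu(g')\mu_A(a)\mu_A(a)^{-1}\mu(h')=\mu(g')\mu(h')$ since $\mu_A$ is alternating; this identifies the displacement at $(g,h)$ with the displacement at $(g',h')$, which has strictly shorter total length, so iterating the reduction lands in either the first case (displacement $0$) or the remaining case. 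That remaining case, $g=g'a_1$ and $h=a_2h'$ with $a_1a_2\neq1$, where the two $A$-letters fuse into the single letter $a_1a_2$, is the one doing the real work.

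In that case I compute $\mu(gh)=\mu(g')\,\mu_A(a_1a_2)\,\mu(h')$ while $\mu(g)\mu(h)=\mu(g')\,\mu_A(a_1)\mu_A(a_2)\,\mu(h')$. Here is the one place where the argument genuinely departs from the abelian original, and I expect it to be the main point to get right: in the module setting the common prefix $f(g')$ and the common suffix cancelled \emph{algebraically}, leaving a single summand $g'.\partial f_A(a_1,a_2)$ whose norm was unchanged by the isometric action, so only a one-sided action was invoked. In the present setting the common prefix $\mu(g')$ and the common suffix $\mu(h')$ do not cancel algebraically; instead I strip them off using the left- and right-invariance of $d$, which is exactly why full bi-invariance is needed. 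This gives $d(\mu(gh),\mu(g)\mu(h))=d(\mu_A(a_1a_2),\mu_A(a_1)\mu_A(a_2))\le\deff\mu_A$, and the symmetric situation with fused $B$-letters gives the bound $\deff\mu_B$. Hence $\deff\mu\le\max\{\deff\mu_A,\deff\mu_B\}$, and the reverse inequality is immediate by restricting to the factors, where $\mu$ agrees with $\mu_A$ and $\mu_B$ (the degenerate value $a_1a_2=1$ contributing nothing, again by the alternating property), so equality holds.

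Finally, the extension statement is formal: when $\deff\mu_A=\deff\mu_B=0$ the factor maps are homomorphisms, the equality just proved forces $\deff\mu=0$, and the universal property of the free product identifies $\mu=\mu_A\ast\mu_B$ with the unique homomorphism $\Gamma\longrightarrow G$ restricting to $\mu_A$ and $\mu_B$; thus the construction recovers the canonical bijection $\Hom(A,G)\times\Hom(B,G)\xrightarrow{\ \sim\ }\Hom(\Gamma,G)$. Apart from the two-sided bookkeeping in the fusion case, the whole argument is a mechanical multiplicative rewrite of Proposition~\ref{prop-qc}.
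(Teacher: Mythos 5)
Your proof is correct and coincides with the paper's: the paper's entire argument is the remark that, thanks to bi-invariance of $d$, the proof of Proposition~\ref{prop-qc} transfers verbatim to the multiplicative setting, and your transcription --- including the observation that stripping the common prefix $\mu(g')$ and suffix $\mu(h')$ in the letter-fusion case is exactly where both left- and right-invariance of the metric are used --- is that transfer spelled out. The only nitpick is that the alternating condition gives $\mu_A(1)=\mu_A(1)^{-1}$, hence only $\mu_A(1)^2=e$ and not $\mu_A(1)=e$, so the value at the identity should be fixed to $e$ by convention (as the empty product in the normal form) rather than derived.
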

In order to make a statement about the quantity $D(\mu)$ for a split quasi-representation $\mu$ we assume that the target group $(G,d)$ has \emph{no $\varepsilon$-small subgroups}, which means that  the open $\varepsilon$-ball around the identity contains no non-trivial subgroup. We obtain the following result
\begin{theorem}
\label{thm-qr}
Let $\Gamma=A\ast B$ and let $G=(G,d)$ be a group without $\varepsilon$-small subgroups. For bounded alternating maps $\mu_A:A\longrightarrow G$, $\mu_B:B\longrightarrow G$ with
\[
\delta:=\max\{\|\mu_A\|_\infty,\|\mu_B\|_\infty\}\leq\frac{\varepsilon}{2}
\]
the split quasi-representation $\mu=\mu_A\ast\mu_B:\Gamma\longrightarrow G$ satisfies
\[
D(\mu)\geq\delta.
\]
\end{theorem}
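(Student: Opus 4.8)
The plan is to argue by contradiction, showing that no homomorphism can approximate $\mu$ to within less than $\delta$. Suppose $\rho\in\Hom(\Gamma,G)$ satisfies $d(\mu,\rho)<\delta$, and set $\delta':=d(\mu,\rho)$, so that $d(\mu(g),\rho(g))\leq\delta'<\delta$ for every $g\in\Gamma$. Since $\delta=\max\{\|\mu_A\|_\infty,\|\mu_B\|_\infty\}$, I may assume without loss of generality that $\delta=\|\mu_A\|_\infty=\sup_{a\in A}d(\mu_A(a),e)$. Because $\delta'<\delta$, I can single out one element $a_0\in A$ whose value is nearly extremal, say $d(\mu_A(a_0),e)>\delta'$. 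The whole argument will then be carried out on the cyclic subgroup $\langle a_0\rangle\subseteq A$.

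The crux is to show that the cyclic subgroup $\langle\rho(a_0)\rangle\leq G$ is contained in the open ball $B_\varepsilon(e)$. For every $n\in\Z$ the element $a_0^{\,n}$ lies in $A$, so by the defining formula for the split quasi-representation $\mu(a_0^{\,n})=\mu_A(a_0^{\,n})$, and hence $d(\mu(a_0^{\,n}),e)\leq\|\mu_A\|_\infty=\delta$. Combining this with $d(\mu(a_0^{\,n}),\rho(a_0^{\,n}))\leq\delta'$ and using that $\rho$ is a homomorphism, the triangle inequality gives
\[
d(\rho(a_0)^{\,n},e)=d(\rho(a_0^{\,n}),e)\leq\delta'+\delta<\tfrac{\varepsilon}{2}+\tfrac{\varepsilon}{2}=\varepsilon
\]
for all $n$, where the strict inequality uses $\delta'<\delta\leq\varepsilon/2$. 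Thus $\langle\rho(a_0)\rangle\subseteq B_\varepsilon(e)$, and since $G$ has no $\varepsilon$-small subgroups this forces $\rho(a_0)=e$. But then $d(\mu_A(a_0),e)=d(\mu(a_0),\rho(a_0))\leq\delta'$, contradicting the choice $d(\mu_A(a_0),e)>\delta'$. Hence every homomorphism satisfies $d(\mu,\rho)\geq\delta$, i.e. $D(\mu)\geq\delta$.

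I expect the genuine content to be concentrated entirely in this \emph{trapped subgroup} step rather than in any delicate estimate: once one realizes that testing $\rho$ against the powers of a single factor element $a_0$ confines an entire cyclic subgroup to the $\varepsilon$-ball, the no-small-subgroups hypothesis does the rest. The two standing hypotheses enter transparently and essentially — the bound $\delta\leq\varepsilon/2$ is exactly what makes $\delta'+\delta<\varepsilon$, and the absence of $\varepsilon$-small subgroups is what converts ``all powers of $\rho(a_0)$ lie near $e$'' into ``$\rho(a_0)=e$''. The only point needing minor care is the normal-form bookkeeping guaranteeing $\mu|_A=\mu_A$ (so that $\mu(a_0^{\,n})=\mu_A(a_0^{\,n})$), which is immediate from the construction; note that bi-invariance of $d$, though part of the setup and required for the preceding proposition, is not itself used in this particular estimate.
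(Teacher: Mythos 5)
Your proof is correct, but it is not the argument the paper gives. The paper's proof runs in two stages: first it traps the whole images $\rho(A)$ and $\rho(B)$ in the $\varepsilon$-ball to force the approximating homomorphism to be trivial, and then — now knowing $\|\mu\|_\infty<\delta$ — it exploits the multiplicativity of the split map on the cyclic subgroups generated by the mixed words $ab$ and $ab^{-1}$ to trap $\langle\mu(ab^{\pm1})\rangle$ as well, concluding $\mu_A(a)\mu_B(b)^{\pm1}=e$, hence $\mu_A(a)^2=e$, hence $\mu_A\equiv e\equiv\mu_B$ and $\delta=0$, a contradiction. You instead trap a single cyclic subgroup $\langle\rho(a_0)\rangle$ for one near-extremal $a_0$ in the factor realizing the supremum, using only that $\mu|_A=\mu_A$ is $\delta$-bounded, and read off the contradiction from $d(\mu(a_0),\rho(a_0))\le\delta'<d(\mu_A(a_0),e)$. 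Your route is shorter and strictly more general: it never uses the free-product structure or the multiplicativity of $\mu$ on words mixing the factors, so it would apply to any map $\Gamma\longrightarrow G$ whose restriction to some subgroup is a bounded alternating map attaining (as a supremum) the value $\delta\le\varepsilon/2$. What the paper's longer argument buys is the additional structural information that a nearby homomorphism would have to be globally trivial and that the split data itself would then be forced to collapse — information that is suggestive for the Ulam-stability applications mentioned in the introduction — but none of that is needed for the stated inequality $D(\mu)\ge\delta$. All your estimates check out: $\delta'+\delta<2\delta\le\varepsilon$ gives the strict containment in the open $\varepsilon$-ball, and the existence of $a_0$ with $d(\mu_A(a_0),e)>\delta'$ is exactly the definition of the supremum.
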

\begin{proof}For $\delta=0$ the statement is trivial, so we may assume that $\delta>0$ and that there exists $\varphi\in\Hom(\Gamma,G)$ with $d(\mu,\varphi)<\delta$. For all $a\in A$ we have
\[
d(\varphi(a),e)\leq d(\varphi(a),\mu_A(a))+d(\mu_A(a),e)<\delta+\delta\leq\varepsilon
\]
which means that the subgroup $\varphi(A)<G$ is $\varepsilon$-small and hence trivial. The same argument shows that $\varphi(B)$ is trivial, so the homormorphism $\varphi$ is trivial. This means that the map $\mu$ is bounded with $\|\mu\|_\infty<\delta$. Now let $a\in A$ and $b\in B$ be different from the identity and let $g_\pm:=ab^{\pm1}$. By construction we have $\mu(g_\pm)^n=\mu(g_\pm^n)$. This means that the cyclic subgroups $\langle\mu(g_+)\rangle$ and $\langle\mu(g_-)\rangle$ of $G$ are $\delta$-small and hence trivial. In particular we have $\mu(g_\pm)=\mu_A(a)\mu_B(b)^{\pm1}=e$, which implies $\mu_A(a)^2=e$. The subgroup $\{e,\mu_A(a)\}<G$ is again $\delta$-small, so that $\mu_A(a)=e$. It follows that $\mu_A\equiv e$, and likewise $\mu_B\equiv e$. Hence $\delta=0$, a contradiction.
\end{proof}

\appendix
\section{$\h^2_\bb(\F_2,\R)$ is infinite dimensional, a simple proof}
Let $\F_2=\langle a,b\rangle$ be the free group of rank 2. Given a bounded sequence $s:\N\longrightarrow\R$ we define a map $f_s:\F_2\longrightarrow\R$ as follows:

Extend the sequence to an alternating map $s:\Z\longrightarrow\R$, i.e. set $s(0)=0$ and $s(-k)=-s(k)$ for $k<0$. Define $f_s(e)=0$ and for $e\neq g\in\F_2$ with normal form $g=a^{k_1}b^{k_2}\cdots a^{k_{n-1}}b^{k_n}$ let
\[
f_s(g):=s(k_1)+s(k_2)+\ldots+s(k_n).
\]
\begin{theorem_app}The map $f_s$ is a quasimorphism which is trivial if and only if $s=0$. Hence we have a linear embedding
\[
\ell^\infty\hooklongrightarrow\h^2_\bb(\F_2,\R),\qquad s\mapsto\left[\partial f_s\right]_\bb
\]
and the space $\h^2_\bb(\F_2,\R)$ is infinite dimensional.
\end{theorem_app}
\begin{proof}
For $g,h\in\Gamma$ write $\partial f_s(g,h)=f_s(gh)-f_s(g)-f_s(h)$. If (the normal form of) $g$ ends with an $a$-letter and $h$ begins with $b$-letter or vice versa, then $\partial f_s(g,h)=0$ since in this case the normal form of $gh$ is the concatenation of the normal forms of $g$ and $h$. If the normal forms are $g=g'a^k$ and $h=a^{-k}h'$ then $\partial f_s(g,h)=\partial f(g',h')$, since $s(-k)=-s(k)$, and likewise for $b$-letters. So we may assume that $g=g'a^k$ and $h=a^lh'$ with $k+l\neq0$, or likewise with $b$-letters. In this case we have the normal form $gh=g'a^{k+l}h'$ and so $\partial f_s(g,h)=s(k+l)-s(k)-s(l)$, i.e. $|\partial f_s(g,h)\|\leq3\|s\|_\infty$. This proves that $f_s$ is a quasimorhpism.

Now assume that $f_s$ is trivial, i.e. that $f_s=\varphi+\beta$ where $\varphi$ is a homomorphism and $\beta$ is a bounded map. Evaluating this equation at $a^n$ yields $s(n)=n\cdot\varphi(a)+\beta(a^n)$. Since $s$ and $\beta$ are bounded this means that $\varphi(a)=0$, and likewise $\varphi(b)=0$. So $\varphi=0$, which is to say that $f_s=\beta$ is bounded. For $k\in\Z$, $k\neq0$, we have $f_s((a^kb^{\pm1})^n)=n\cdot(s(k)\pm s(1))$. Since $f_s$ is bounded it follows that $s(k)\pm s(1)=0$, so $s(k)=0$ and therefore $s=0$.
\end{proof}
\section{Defect spaces}
\label{appendixB}
Let $\Gamma$ be a group. The \emph{defect space} of $\Gamma$, denoted by $\D(\Gamma)$, is the space of functions $f:\Gamma\longrightarrow\R$ that are bounded and alternating (i.e. $f(g^{-1})=-f(g)$ for all $g\in\Gamma$), equipped with the norm
\[
\dn{f}=\deff{f}=\sup_{g,h\in\Gamma}|\partial f(g,h)|,
\]
where $\partial f(g,h)=f(g)+f(h)-f(gh)$. This is indeed a norm: If $\dn{f}=0$ then $f$ is a bounded homomorphism into $\R$ and hence equal to zero.

In the following statement $\mathrm{ord}(g)$ stands for the (possibly infinite) order of a group element $g\in\Gamma$.
\begin{proposition_app}
\label{def-infty}
For $f\in\D(\Gamma)$ and $g\in\Gamma$, $g\neq1$, we have the estimate
\[
|f(g)|\leq\left(1-\frac{2}{\mathrm{ord}(g)}\right)\dn{f}.
\]
\end{proposition_app}
\begin{proof}
We may assume that $\dn{f}=1$. For $n\geq1$ we have the estimate $|f(g^n)-nf(g)|\leq n-1$, which follows from
\[
|f(g^n)-nf(g)|=\left|\sum_{i=1}^{n-1}f(g)+f(g^i)-f(g^{i+1})\right|\leq(n-1)\dn{f}.
\]
If $g$ has finite even order $2k$ then $f(g^k)=0$, as $f$ is alternating, so the above estimate implies $k|f(g)|\leq k-1$ which means that $|f(g)|\leq1-\frac1k=1-\frac{2}{\mathrm{ord(g)}}$. If $g$ has order $2k+1$ then we have $f(g^k)+f(g^{k+1})=0$, so summation of the estimates
\begin{align*}
|f(g^k)-kf(g)|&\leq k-1\\
|f(g^{k+1})-(k+1)f(g)|&\leq k
\end{align*}
yields $(2k+1)|f(g)|\leq2k-1$, so $|f(g)|\leq1-\frac{2}{2k+1}=1-\frac{2}{\mathrm{ord(g)}}$. Finally, if $g$ has infinite order then letting $n$ tend to infinity in $|\frac{f(g^n)}{n}-f(g)|\leq1-\frac1n$ yields $|f(g)|\leq1$.
\end{proof}
\begin{corollary_app}
\label{norm-equiv}
The defect norm is equivalent to the supremum norm $\|\cdot\|_\infty$, more precisely, for $f\in\D(\Gamma)$ we have
\[
\|f\|_\infty\leq\dn{f}\leq3\|f\|_\infty.
\]
The space $\D(\Gamma)$ is therefore a Banach space, it is non-separable when $\Gamma$ has infinitely many elements of order different from $2$.
\end{corollary_app}
\begin{proof}
The lower bound is a consequence of the proposition, the upper bound is immediate from the definition of the defect norm.
\end{proof}
\begin{proposition_app} An epimorphism $\pi:\Gamma\longrightarrow Q$ induces an isometric embedding $\pi^*:\D(Q)\hooklongrightarrow\D(\Gamma)$, $f\mapsto f\circ\pi$.
\end{proposition_app}
\begin{proof}By surjectivity of $\pi$ we have
\[
\dn{\pi^*f}=\sup_{g,h\in\Gamma}|\partial f(\pi(g),\pi(h))|=\sup_{g,h\in Q}|\partial f(g,h)|=\dn{f}.\qedhere
\]
\end{proof}
\begin{proposition_app}
\label{subgroup-emb}
For a monomorphism $i:H\longrightarrow\Gamma$, the map
\[
s_i:\D(H)\longrightarrow\D(\Gamma),\quad s_i(f)(g)=\left\{\begin{array}{cl}f(h),&g=i(h)\\0,&g\not\in i(H)\end{array}\right.
\]
is an isometric embedding.
\end{proposition_app}
\begin{proof}
Write $F:=s_i(f)$. We identify $H$ with its image $i(H)$. Let $g,h\in\Gamma$. Of the three elements $g,h,gh$ either none, one or all three belong to $H$. In the first case we have $\partial F(g,h)=0$, in the second case we have $|\partial F(g,h)|\leq\|f\|_\infty\leq\dn{f}$ (by Proposition \ref{def-infty}) and in the last case we have $|\partial F(g,h)|=|\partial f(g,h)|\leq\dn{f}$. So we have $\dn{F}\leq\dn{f}$. As $i$ is injective we also have the reverse inequality, i.e. $\dn{F}=\dn{f}$.
\end{proof}
In case of a normal subgroup the last statement can be improved. Consider a short exact sequence
\[
\xymatrix{
1\ar[r]&N\ar[r]^{i}&\Gamma\ar[r]^{\pi}&Q\ar[r]&1.
}
\]
We have the following maps between the defect spaces
\[
\xymatrix{
\D(Q)\ar[r]^{\pi^*}&\D(\Gamma)\ar@<+3pt>[r]^{i^*}&\D(N)\ar@<+1pt>[l]^{s_i},
}
\]
where $i^*\circ\pi^*=0$ and $s_i$ is a section of $i^*$. This means that the embeddings $\pi^*$ and $s_i$ are complementar. More precisely, we have
\begin{proposition_app}
\label{prop-ses}
For a short exact sequence as above we have an isometric embedding
\[
j:\D(N)\infplus\D(Q)\hooklongrightarrow\D(\Gamma)
\]
which is given by $j=s_i+\pi^*$, and, more explicitly, by
\[
j(f,f')(g)=\left\{\begin{array}{cl}f(n),&g=i(n)\\f'(\pi(g)),&g\not\in i(N).\end{array}\right.
\]
\end{proposition_app}
\noindent Here the notation $\infplus$ stands for the max-norm on the direct sum.
\begin{proof}
For $(f,f')\in\D(N)\oplus\D(Q)$ we write $F:=j(f,f')$. We identify $N$ with its image $i(N)$. We first note that for $g\in N\subset\Gamma$ we have  $\pi^*(f')(g)=f'(1_N)=0$, so $F(g)=f(g)$, which proves the explicit formula for $j$. Now let $g,h\in\Gamma$. If none of $g,h,gh$ is contained in $N$ then $|\partial F(g,h)|=|\partial f'(\pi(g),\pi(h))|\leq\dn{f'}$. If $g\in N,h\not\in N$ then $\pi(h)=\pi(gh)$, so
\[
|\partial F(g,h)|=|f(g)+f'(\pi(h))-f'(\pi(gh))|=|f(g)|\leq\|f\|_\infty\leq\dn{f},
\]
by Proposition \ref{def-infty}. Because of the identities
\[
|\partial F(g,h)|=|\partial F(h^{-1},g^{-1})|=|\partial F(g^{-1},gh)|
\]
the same holds true whenever exactly one of $g,h,gh$ is contained in $N$. If all three elements are in $N$ then $|\partial F(g,h)|=|\partial f(g,h)|\leq\dn{f}$. It follows that $\dn{F}\leq\max\{\dn{f},\dn{f'}\}$, and the reverse inequality holds since the maps $s_i$ and $\pi^*$ are isometric.
\end{proof}

\bibliographystyle{abbrv}
\bibliography{bib}

\end{document}